\newtheorem{thm}{Theorem}
\newtheorem{corollary}[thm]{Corollary}
\newtheorem{definition}{Definition}[section]
\newtheorem{lemma}[thm]{Lemma}
\newtheorem{proposition}[thm]{Proposition}
\newtheorem{remark}{Remark}[section]
\newcommand{\dd}{\textup{d}}
\newcommand{\R}{\mathbb{R}}
\newcommand{\C}{\mathbb{C}}
\newcommand{\Z}{\mathbb{Z}}
\begin{document}
\title[Magnetic Choquard equation with Hardy-Littlewood-Sobolev critical exponent.]{Nonlinear Perturbations of a periodic magnetic Choquard equation with Hardy-Littlewood-Sobolev critical exponent}
	
\author{H. Bueno, N. H. Lisboa and L. L. Vieira }
	
\begin{abstract}
In this paper, we consider the following magnetic nonlinear Choquard equation
\[-(\nabla+iA(x))^2u+ V(x)u = \left(\frac{1}{|x|^{\alpha}}*|u|^{2_{\alpha}^*}\right) |u|^{2_{\alpha}^*-2} u + \lambda f(u)\ \textrm{ in }\ \R^N,\]
where $2_{\alpha}^{*}=\frac{2N-\alpha}{N-2}$ is the critical exponent in the sense of the Hardy-Littlewood-Sobolev inequality, $\lambda>0$, $N\geq 3$, $0<\alpha< N$, $A: \mathbb{R}^{N}\rightarrow \mathbb{R}^{N}$ is an $C^1$, $\mathbb{Z}^N$-periodic vector potential and $V$ is a continuous scalar potential given as a perturbation of a periodic potential. Under suitable assumptions on different types of nonlinearities $f$, namely,  $f(x,u)=\left(\frac{1}{|x|^{\alpha}}*|u|^{p}\right)|u|^{p-2} u$ for $(2N-\alpha)/N<p<2^{*}_{\alpha}$, then $f(u)=|u|^{p-1} u$ for $1<p<2^*-1$ and $f(u)=|u|^{2^* - 2}u$ (where $2^*=2N/(N-2)$), we prove the existence of at least one ground state solution for this equation by variational methods if $p$ belongs to some intervals depending on $N$ and $\lambda$.

\end{abstract}
	
\maketitle
	
\noindent Keywords: Variational methods, magnetic Choquard equation, Hardy-Littlewood-Sobolev critical exponent\\
	
\noindent MSC[2010]: 35Q55, 35Q40, 35J20

\section{Introduction}
	
In this article we consider the problem
\begin{equation}\label{main}
-(\nabla+iA(x))^2u+ V(x)u = \left(\frac{1}{|x|^{\alpha}}*|u|^{2_{\alpha}^*}\right) |u|^{2_{\alpha}^*-2} u + \lambda f(u)\ \textrm{ in }\ \R^N,
\end{equation}
where $\nabla+iA(x)$ is the covariant derivative with respect to the $C^1$, $\Z^N$-periodic vector potential $A\colon \mathbb{R}^{N}\rightarrow \mathbb{R}^{N}$, i.e,
\begin{equation*}
A(x+y)=A(x),\ \forall\; x\in 	\R^N,\ \forall\; y\in\Z^N.
\end{equation*}
The exponent  $2_{\alpha}^{*}=\frac{2N-\alpha}{N-2}$ is critical, in the sense of the Hardy-Littlewood-Sobolev inequality, $\lambda>0$, $N\geq 3,$ $0<\alpha< N,$  $V:\R^N\to \R$ is a continuous scalar potential and $f$ stands for different types of nonlinearities. Namely, we first consider  $f(x,u)=\left(\frac{1}{|x|^{\alpha}}*|u|^{p}\right)|u|^{p-2} u$ for $(2N-\alpha)/N<p<2^{*}_{\alpha}$, then $f(u)=|u|^{p-1} u$ for $1<p<2^*-1$, where $2^*$ is the critical exponent of immersion $D^{1,2}(\mathbb{R}^N)\hookrightarrow L^{2^*}(\mathbb{R}^N)$,  and finally  we examine $f(u)=|u|^{2^* - 2}u$. 

Inspired by the seminal work of Coti Zelati and Rabinowitz \cite{CotiZelati}, but also by Alves, Carrião and Miyagaki \cite{Claudionor} and by Alves and Figueiredo \cite{Giovany}, we assume that there is a continuous, $\Z^N$-periodic potential  $V_{\mathcal{P}}:\R^N\to \R$,  constants $V_0,W_0>0$ and $W\in L^{\frac{N}{2}}(\R^N)$ with  $W(x)\geq 0$ such that
\begin{itemize}
\item[$(V_1)$] $V_{\mathcal{P}}(x)\geq V_0,\quad\forall\; x\in \R^N$;
\item[$(V_2)$]	$V(x)=V_{\mathcal{P}}(x)-W(x)\geq W_0,\quad\forall\; x\in\R^N$,	
\end{itemize}
where the last inequality is strict on a subset of positive measure in $\R^N$.

Since the problem is considered in  the whole $\R^N$ and has a critical nonlinearity in the Hardy-Littlewood-Sobolev sense, the verification of any compactness condition is not easy.

Our paper is motivated by Gao and Yang in \cite{BrezisN1}, where a classical Choquard equation is considered in a bounded domain, i.e., the case $A\equiv 0$ and $V\equiv 0$ is studied in a bounded domain $\Omega$. There is a huge literature about the Choquard equation and we cite only Moroz and Van Schaftingen \cite{Moroz2} for a good review of results on this important subject. In \cite{BrezisN1}, Gao and Yang proved the existence of a ground state solution under restriction on $N$ and $\lambda$. Other recent advances in the study of the Choquard equation can be found, e.g., in \cite{AlvesMinbo4, AlvesMinbo3, AlvesMinbo1,LeleMinbo2, LeleMinbo,Minbo2, Moroz1,ShenYang}.

In Mukherjee and Sreenadh \cite{Mukherjee}, the magnetic problem
\[-(\nabla+iA(x))^2u+ \mu g(x)u = \ \lambda u+\left(\frac{1}{|x|^{\alpha}}*|u|^{2_{\alpha}^*}\right) |u|^{2_{\alpha}^*-2} u\ \textrm{ in }\ \R^N\]
was examined. In this equation $\mu>0$ is also a parameter that interacts with the linear term in the right-hand side of the equation. Under suitable hypotheses on $g$, the existence of a ground state solution was proved. The concentration of solutions as $\mu\to \infty$ was also studied.

Changing the right-hand side of \eqref{main} to
\begin{equation}\label{Cingolani}\bigg(\frac{1}{|x|^{\alpha}}*|u|^p)\bigg)|u|^{p-2}{u},
\end{equation}
the problem was studied by Cingolani, Clapp and Secchi in \cite{Cingolani}. In that paper the authors proved existence and multiplicity of solutions. In \cite{Hamilton}, the right-hand side \eqref{Cingolani} was generalized and a ground state solution was obtained, but the multiplicity result depend on more restrictive hypotheses than in \cite{Cingolani}.

Recent years have witnessed a growth of interest in the study of magnetic equations. The progress in this research can be found in a series of articles, e.g., \cite{AFY,AlvesMinbo2,Ambrosiod'Avenia,Ambrosio,d'Avenia,d'AveniaSqua}.

The main results of this paper are the following theorems.

\begin{thm}\label{teoprob1}
For $\frac{2N-\alpha}{N}<p<2_{\alpha}^*$, under the hypotheses already stated on $A$, $V$ and $\alpha$, problem
\begin{equation}\label{prob1}
-(\nabla+iA(x))^2u+ V(x)u =\left(\frac{1}{|x|^{\alpha}}*|u|^{2_{\alpha}^*})\right)|u|^{2_{\alpha}^*-2} u + \lambda\left(\frac{1}{|x|^{\alpha}}*|u|^{p})\right)|u|^{p-2} u\ \textrm{ in }\ \R^N
\end{equation} has at least one ground state solution if either
\begin{enumerate}
\item [$(i)$] $\frac{N+2-\alpha}{N-2}<p<2_{\alpha}^*$, $N=3,4$ and $\lambda>0$;
\item [$(ii)$] $\frac{2N-\alpha}{N}<p\leq \frac{N+2-\alpha}{N-2}$,  $N=3,4$ and $\lambda$ sufficiently large;
\item [$(iii)$] $\frac{2N-2-\alpha}{N-2}<p<2_{\alpha}^*$, $N\geq 5$ and $\lambda>0$;
\item [$(iv)$] $\frac{2N-\alpha}{N}<p\leq \frac{2N-2-\alpha}{N-2}$,  $N\geq 5$ and $\lambda$ sufficiently large.
\end{enumerate}
\end{thm}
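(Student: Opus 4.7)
The plan is to attack \eqref{prob1} via the mountain-pass theorem in the magnetic Hilbert space
$$H := \left\{ u \in L^2(\R^N,\C) : (\nabla + iA)u \in L^2(\R^N,\C^N)\right\}$$
equipped with the inner product induced by $\int (\nabla_A u \cdot \overline{\nabla_A v} + V u\bar v)\,\dd x$, which under $(V_1)$--$(V_2)$ is equivalent to the standard magnetic Sobolev norm. The energy functional
$$I_\lambda(u)=\tfrac12\|u\|_H^2 - \tfrac{1}{2\cdot 2_\alpha^\ast}\int \bigl(|x|^{-\alpha}\ast|u|^{2_\alpha^\ast}\bigr)|u|^{2_\alpha^\ast}\,\dd x - \tfrac{\lambda}{2p}\int \bigl(|x|^{-\alpha}\ast|u|^p\bigr)|u|^p\,\dd x$$
is $C^1$ by Hardy--Littlewood--Sobolev, and one checks routinely that it has the mountain-pass geometry: the quadratic part dominates near $0$ (by HLS plus $p>(2N-\alpha)/N$), and along any fixed direction the critical Choquard term drives $I_\lambda$ to $-\infty$. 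Let $c_\lambda$ denote the resulting mountain-pass level.

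The decisive step is to show $c_\lambda<c^\ast$, where $c^\ast=\tfrac{N+2-\alpha}{2(2N-\alpha)}\,S_{HL}^{(2N-\alpha)/(N+2-\alpha)}$ is the threshold defined via the best constant $S_{HL}$ of the critical Choquard embedding. I would test along the family $u_\varepsilon(x)=\eta(x)\,e^{iA(0)\cdot x}\,U_\varepsilon(x)$, where $U_\varepsilon$ are the HLS extremals (Talenti-type bubbles) centered at a point where $W>0$ and $\eta$ is a cutoff; the gauge factor $e^{iA(0)\cdot x}$ reduces the magnetic gradient to the ordinary one modulo lower-order terms, as in the Esteban--Lions / Arioli--Szulkin framework. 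Computing $\sup_{t\geq 0}I_\lambda(tu_\varepsilon)$ and expanding as $\varepsilon\to 0$ yields $c^\ast$ from the critical part, minus a correction of order
$$\lambda\,\varepsilon^{\beta(N,p,\alpha)}\quad\text{(with logarithmic factor at threshold)}$$
coming from the Choquard perturbation. The correction dominates the $O(\varepsilon^{N-2})$ error from the cutoff and magnetic gauge precisely when $p>(N+2-\alpha)/(N-2)$ (for $N=3,4$) or $p>(2N-2-\alpha)/(N-2)$ (for $N\geq 5$), giving cases $(i)$ and $(iii)$ for all $\lambda>0$; in the remaining ranges $(ii)$ and $(iv)$ the correction has the correct sign but is dominated by the error unless $\lambda$ is taken large enough to compensate. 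This is the source of the case split in the theorem and I expect it to be the main technical obstacle of the proof.

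Having $c_\lambda<c^\ast$, I would recover compactness following the Coti Zelati--Rabinowitz strategy adapted to the magnetic Choquard setting. Let $I_{\mathcal P}$ denote the functional associated with $V_\mathcal{P}$ in place of $V$ and $c_\mathcal{P}$ its mountain-pass level; since $W\geq 0$ with strict inequality on a set of positive measure, plugging a ground state of $I_{\mathcal P}$ (suitably translated) into $I_\lambda$ produces $c_\lambda<c_\mathcal{P}$. A Palais--Smale sequence $(u_n)$ at level $c_\lambda$ is bounded in $H$, hence converges weakly to some $u$. Using the nonlocal Brezis--Lieb lemma for the Choquard nonlinearity (in the form of Moroz--Van Schaftingen) together with the diamagnetic inequality to secure a.e.\ convergence of $|u_n|$, I would split $u_n=u+w_n$ with $w_n\rightharpoonup 0$; the bound $c_\lambda<c^\ast$ rules out concentration of $w_n$ as a critical bubble, while $\Z^N$-periodicity of $A$ and $V_\mathcal{P}$ combined with $c_\lambda<c_\mathcal{P}$ rules out the translation/vanishing scenario (any nontrivial translate would contribute at least $c_\mathcal{P}$). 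Therefore $u_n\to u$ strongly and $u\neq 0$ is a nontrivial critical point. A standard minimization over the associated Nehari manifold promotes this critical point to a ground state, completing the proof.
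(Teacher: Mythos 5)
Your proposal follows essentially the same route as the paper: mountain-pass geometry for the perturbed functional, a cut-off Aubin--Talenti/HLS bubble computation showing the minimax level lies strictly below $\frac{N+2-\alpha}{2(2N-\alpha)}S_{H,L}^{\frac{2N-\alpha}{N+2-\alpha}}$ with exactly the same case split in $(N,p,\lambda)$, and then a comparison of the perturbed level with the $\Z^N$-periodic level (strict inequality coming from $W\geq 0$, $W\not\equiv 0$) to exclude vanishing of the weak limit. The only cosmetic differences are that the paper tests with the real cut-off bubble and absorbs the $|A|^2$ term directly rather than introducing a gauge factor $e^{iA(0)\cdot x}$, and it uses Lions' vanishing lemma plus a Nehari-manifold projection rather than a Brezis--Lieb splitting, but these are interchangeable implementations of the same argument.
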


\begin{thm}\label{teoprob2}
For $1<p<2^{*} - 1$, under the hypotheses already stated on $A$, $V$ and $\alpha$,  problem
\begin{equation}\label{prob2}
-(\nabla+iA(x))^2u+ V(x)u = \ \bigg(\frac{1}{|x|^{\alpha}}*|u|^{2_{\alpha}^*}\bigg)|u|^{2_{\alpha}^*-2} u + \lambda |u|^{p-1}u\  \textrm{ in }\ \R^N.
\end{equation}
has at least one ground state solution if either
\begin{enumerate}
\item [$(i)$] $3<p<5$, $N=3$ and $\lambda>0;$
\item [$(ii)$] $p>1$, $N\geq 4$ and $\lambda>0$;
\item [$(iii)$] $1<p\leq 3$, $N=3$ and $\lambda$ sufficiently large.
\end{enumerate}
\end{thm}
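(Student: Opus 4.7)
\noindent The plan is to apply the Mountain Pass Theorem to the energy functional
\[
I_\lambda(u) = \tfrac{1}{2}\!\int_{\R^N}\!\big(|\nabla_A u|^2 + V(x)|u|^2\big)dx - \tfrac{1}{2\cdot 2_\alpha^*}\!\int_{\R^N}\!\Big(\tfrac{1}{|x|^\alpha}*|u|^{2_\alpha^*}\Big)|u|^{2_\alpha^*}dx - \tfrac{\lambda}{p+1}\!\int_{\R^N}\!|u|^{p+1}dx
\]
defined on the magnetic Sobolev space $H^1_{A,V}(\R^N,\C)$ endowed with the norm induced by the first quadratic form above. The mountain pass geometry is standard: $(V_2)$ ensures the quadratic part controls $I_\lambda$ from below near the origin, while the HLS--critical term dominates along any ray for large $t$; this produces a Palais--Smale sequence at a level $c_\lambda>0$.

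Compactness of Palais--Smale sequences fails for two independent reasons. First, the periodicity of $A$ and $V_{\mathcal P}$ allows $\Z^N$--translations to produce non-convergent but bounded sequences; this is handled by a Coti Zelati--Rabinowitz splitting argument which, combined with $(V_2)$ (namely $W\geq 0$ and $W\not\equiv 0$ on a positive-measure set), yields the strict inequality $c_\lambda<c_{\lambda,\mathcal P}$ between our mountain pass level and that of the periodic functional $I_{\lambda,\mathcal P}$ obtained by replacing $V$ with $V_{\mathcal P}$. Second, the HLS--critical right-hand side permits concentration along an Aubin--Talenti profile, so compactness can only be restored below the threshold
\[
c^* \;=\; \tfrac{N+2-\alpha}{2(2N-\alpha)}\,S_{HL}^{\frac{2N-\alpha}{N+2-\alpha}},
\]
where $S_{HL}$ is the sharp constant of the HLS--Sobolev embedding. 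Securing the two strict inequalities $c_\lambda<c^*$ and $c_\lambda<c_{\lambda,\mathcal P}$ therefore delivers strong convergence of a Palais--Smale sequence to a non-trivial critical point, which a Nehari-manifold manipulation then identifies as a ground state.

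The case distinction on $(N,p,\lambda)$ arises entirely from the estimate $c_\lambda<c^*$. I would test $I_\lambda$ along $tU_\varepsilon$, where $U_\varepsilon(x)=\varepsilon^{-(N-2)/2}U(x/\varepsilon)\eta(x)$ is a truncated Aubin--Talenti function multiplied by the gauge factor $e^{iA(0)\cdot x}$ (so that the magnetic contribution is an $O(\varepsilon^{N-2})$ correction, as in \cite{Cingolani,Hamilton}). Maximising in $t>0$ produces an expansion of the form
\[
\max_{t\ge 0}\, I_\lambda(tU_\varepsilon) \;\le\; c^* + O(\varepsilon^{N-2}) \;-\; C\,\lambda\,\varepsilon^{\,N-(N-2)(p+1)/2}.
\]
The subcritical gain dominates the remainder exactly when $N-(N-2)(p+1)/2<N-2$, i.e.\ $p>(6-N)/(N-2)$, which means $p>3$ for $N=3$ (case $(i)$) and is automatic for $N\geq 4$ (case $(ii)$), both with arbitrary $\lambda>0$. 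When $N=3$ and $1<p\leq 3$ the strict inequality of exponents fails, so one instead keeps the $\lambda$ prefactor explicit and chooses $\lambda$ sufficiently large so that $C\lambda\,\varepsilon^{\,N-(N-2)(p+1)/2}$ exceeds the $O(\varepsilon^{N-2})$ remainder for some small $\varepsilon>0$; this is case $(iii)$.

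The hardest step is case $(iii)$. Unlike in cases $(i)$ and $(ii)$, the $\varepsilon$-order of the perturbative gain no longer improves on the remainder produced by the truncation, the magnetic phase correction and the zeroth-order contribution, so the admissible range of $\lambda$ must be tracked through all of these terms to obtain an explicit threshold. Once $c_\lambda<c^*$ is established, the remaining arguments---the splitting lemma, the exclusion of escape to infinity via the periodic limit problem, and the Brezis--Lieb type analysis of the HLS--convolution term---follow the well-trodden path of \cite{Cingolani,Hamilton,Mukherjee} adapted to the complex-valued magnetic setting.
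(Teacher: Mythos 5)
Your overall strategy coincides with the paper's: mountain pass geometry for the perturbed functional, compactness restored below the threshold $\frac{N+2-\alpha}{2(2N-\alpha)}S_{H,L}^{\frac{2N-\alpha}{N+2-\alpha}}$, a strict comparison with the periodic problem's minimax level via $(V_2)$ (the paper proves $d_\lambda<c_\lambda$ by testing on the periodic ground state, whose existence must therefore be established first, via Lions' lemma and $\Z^N$-translations), and truncated Aubin--Talenti functions to get under the threshold. Two steps, however, would not survive as written.

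First, the remainder bookkeeping. Whether or not you insert the gauge factor $e^{iA(0)\cdot x}$, the term $\int_{\R^N}\left(|A(x)|^2+V_{\mathcal P}(x)\right)|u_{\varepsilon}|^2\,\dd x$ (or at least its $V_{\mathcal P}$ part) cannot be removed, and it is of order $\varepsilon$ for $N=3$, $\varepsilon^2|\ln\varepsilon|$ for $N=4$ and $\varepsilon^2$ for $N\geq 5$; so the total remainder is \emph{not} $O(\varepsilon^{N-2})$ once $N\geq 4$. The comparison you actually need is between the gain exponent $N-(N-2)(p+1)/2$ and (essentially) $\min\{N-2,2\}$, which is why the paper isolates the limit $\lim_{\varepsilon\to 0}\varepsilon^{-\eta}\left(C_2\int_{B_\delta}|u_\varepsilon|^2\,\dd x-C_3\varepsilon^{\gamma}\right)=-\infty$ and checks it separately for $N=3$, $N=4$ and $N\geq5$. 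Your stated conclusions in cases $(i)$ and $(ii)$ do survive this correction ($p>3$ for $N=3$, $p>1$ for $N\geq4$), but only after redoing the comparison against the true remainder. Second, and more seriously, case $(iii)$: your mechanism --- fix $\varepsilon$ and take $\lambda$ so large that $C\lambda\varepsilon^{N-(N-2)(p+1)/2}$ exceeds the remainder --- tacitly assumes the constant $C$, which hides $t_\lambda^{p+1}$ for the maximizer $t_\lambda$ of $t\mapsto I_\lambda(tu_\varepsilon)$, is bounded below uniformly in $\lambda$. It is not: the Euler equation for $t_\lambda$ gives $\lambda t_\lambda^{p-1}\|u_\varepsilon\|_{p+1}^{p+1}\leq\|u_\varepsilon\|^2$, so $t_\lambda\to0$ as $\lambda\to\infty$ and $\lambda t_\lambda^{p+1}$ need not grow. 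The paper's Case 2 argument turns this to advantage: since $t_\lambda\to0$, one simply bounds $\max_{t\geq0}I_\lambda(tu_\varepsilon)\leq\frac{t_\lambda^2}{2}\|u_\varepsilon\|^2\to0$, which drops below the (fixed, positive) threshold for $\lambda$ large. You should replace your case $(iii)$ step by this observation; the rest of your outline then matches the paper's proof.
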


\begin{thm}\label{teoprob3}
Under the hypotheses already stated on $A$, $V$ and  $\alpha$, the problem
\begin{equation}\label{prob3}
-(\nabla+iA(x))^2u+ V(x)u = \ \lambda\left(\frac{1}{|x|^{\alpha}}*|u|^{p}\right)|u|^{p-2} u+ |u|^{2^* - 2} u\ \textrm{ in }\ \R^N,
\end{equation}
has  at least one ground state solution in the  intervals already described in Theorem \ref{teoprob1}.
\end{thm}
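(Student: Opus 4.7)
The plan is to mirror the variational strategy used for Theorem~\ref{teoprob1}, swapping the roles of the two nonlinear terms: here the classical Sobolev exponent $2^*$ carries the non-compactness, while the Choquard term $\lambda(|x|^{-\alpha}\ast|u|^p)|u|^{p-2}u$ with $p\in((2N-\alpha)/N,2^*_\alpha)$ acts as the subcritical perturbation. I work on the magnetic Hilbert space $H_A=\{u\in L^2(\R^N,\C):(\nabla+iA)u\in L^2(\R^N,\C^N)\}$ endowed with the inner product induced by $V$; by $(V_2)$ the associated norm is equivalent to the standard magnetic one. The energy functional
\begin{equation*}
I_\lambda(u)=\frac{1}{2}\int_{\R^N}\bigl(|(\nabla+iA)u|^2+V|u|^2\bigr)\,dx-\frac{\lambda}{2p}\int_{\R^N}\!\!\int_{\R^N}\frac{|u(x)|^p|u(y)|^p}{|x-y|^\alpha}\,dx\,dy-\frac{1}{2^*}\int_{\R^N}|u|^{2^*}\,dx
\end{equation*}
is of class $C^1$. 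The diamagnetic inequality combined with the Sobolev embedding controls the $L^{2^*}$-term, and the Hardy--Littlewood--Sobolev inequality controls the Choquard integral since $p>(2N-\alpha)/N$, so both nonlinear pieces are of higher order than $\|u\|_{H_A}^2$ at the origin; along any fixed ray the critical Sobolev term drives $I_\lambda\to-\infty$. Hence mountain-pass geometry is in place and a min-max value $c_\lambda>0$ is well defined.

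The heart of the proof is the strict estimate $c_\lambda<c^\ast:=\frac{1}{N}S^{N/2}$, where $S$ is the best Sobolev constant in $\R^N$ and $c^\ast$ is the Aubin--Talenti bubble threshold below which Cerami sequences become compact. Picking $x_0$ in the positivity set of $W$, a cut-off $\eta\in C_c^\infty$ supported near $x_0$, and the gauge-corrected family
\begin{equation*}
v_\varepsilon(x)=\eta(x)\,U_\varepsilon(x-x_0)\,e^{-iA(x_0)\cdot(x-x_0)},
\end{equation*}
with $U_\varepsilon(y)=\varepsilon^{-(N-2)/2}U(y/\varepsilon)$ the Aubin--Talenti instanton, one expands $\max_{t\geq 0}I_\lambda(tv_\varepsilon)$ as $\varepsilon\to 0$. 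The phase factor kills the leading-order magnetic cross term, leaving a positive quadratic remainder offset by the negative contribution of $-W|u|^2$ (this is where $W\geq 0$, $W\not\equiv 0$ enters the estimate). The Choquard perturbation then yields a gain of size $-\lambda\,c\,\varepsilon^{\theta(N,\alpha,p)}+o(\varepsilon^\theta)$, and comparing $\theta$ with the residual of the Sobolev expansion produces the dichotomy of the theorem: when $p>(N+2-\alpha)/(N-2)$ (for $N=3,4$) or $p>(2N-2-\alpha)/(N-2)$ (for $N\geq 5$), the Choquard gain dominates for any $\lambda>0$; otherwise one needs $\lambda$ large to tip the balance. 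These breakpoints are precisely those appearing in Theorem~\ref{teoprob1} because they reflect the same integrability of $|x|^{-\alpha}\ast U_\varepsilon^p$ against $U_\varepsilon^p$.

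Granted $c_\lambda<c^\ast$, compactness of Cerami sequences at level $c_\lambda$ follows. Boundedness is obtained from a suitable combination of $I_\lambda(u_n)$ and $\langle I_\lambda'(u_n),u_n\rangle$, exploiting the super-quadratic structure of both nonlinearities and the coercivity provided by $(V_2)$. For a weak limit $u_n\rightharpoonup u$, a magnetic Brezis--Lieb splitting (applied to the $L^{2^*}$ integrand and, by HLS continuity, to the Choquard term) gives $I_\lambda(u_n)=I_\lambda(u)+\Phi(u_n-u)+o(1)$ with $\Phi$ essentially the Sobolev-critical functional; either $u_n\to u$ strongly, or a bubble of energy at least $c^\ast$ is detected, contradicting $c_\lambda<c^\ast$. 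To exclude $u\equiv 0$, the periodic structure is invoked: translating $u_n$ along $\Z^N$ either recovers a non-trivial weak limit for the full problem or produces one for the limiting problem with $V$ replaced by $V_{\mathcal{P}}$, and the strict gap $c_\lambda<m_{\mathcal{P}}$ (with $m_{\mathcal{P}}$ the periodic ground energy) coming from $W\not\equiv 0$, in the spirit of \cite{CotiZelati}, closes the argument.

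The main obstacle will be the sharp energy estimate in the second step. Since $A$ is only $C^1$ and $\Z^N$-periodic, the phase correction $e^{-iA(x_0)\cdot(x-x_0)}$ annihilates only the first-order magnetic contribution near $x_0$, so the quadratic residual must be expanded to order $\varepsilon^2$ and compared with the subtler power produced by the convolution of two localized instantons; matching these powers to reach the precise breakpoints on $p$ is the technical heart of the argument, while the subsequent compactness machinery is by now standard for magnetic Choquard problems.
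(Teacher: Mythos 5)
Your proposal follows essentially the same route as the paper: the roles of the two nonlinearities are swapped, the compactness threshold becomes $\frac{1}{N}S^{N/2}$, the Aubin--Talenti bubble estimate is closed by the Choquard gain $\varepsilon^{2N-\alpha-(N-2)p}$ against the $O(\varepsilon^{N-2})$ Sobolev residual and the $\int_{\R^N}|u_\varepsilon|^2\,\dd x$ term (which yields exactly the breakpoints of Theorem \ref{teoprob1}), and vanishing is excluded through the periodic problem and the strict level gap produced by $W$. The only caveat is your remark that $-W|u|^2$ offsets the positive quadratic remainder in the bubble expansion: this is neither needed nor sufficient on its own (nothing forces $W$ to dominate $|A|^2+V$ near $x_0$); as in the paper, and as your own next sentence in fact asserts, that remainder is beaten by the Choquard term, while $W$ enters only through the strict inequality $d_\lambda<c_\lambda$ between the perturbed and periodic minimax levels.
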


Initially, we are going to prove the existence of a ground state solution for problem \eqref{main} considering the potential $V=V_{\mathcal{P}}$, that is, we consider the problem
\begin{equation}\label{prop}
-(\nabla+iA(x))^2u+ V_{\mathcal{P}}(x)u =\left(\frac{1}{|x|^{\alpha}}*|u|^{2_{\alpha}^*})\right)|u|^{2_{\alpha}^*-2} u + \lambda f(u)\ \textrm{ in }\ \R^N
\end{equation}
and $f$ as in Theorems \ref{teoprob1}, \ref{teoprob2} and \ref{teoprob3},
where we maintain the notation introduced before and suppose that $(V_1)$ is valid.
	
As in  Gao and Yang in \cite{BrezisN1}, the key step to proof the existence of a ground state solution of problem \eqref{prop} is the use of cut-off techniques on the extreme function that attains the best constant $S_{H,L}$ defined in the sequence. This allows us to estimate the mountain pass value $c_{\lambda}$ associated with the energy functional $J_{A,V_{\mathcal{P}}}$ related with \eqref{prop} in terms of the level where the PS condition holds. In a demanding proof, this lead us to establish intervals for $p$ (depending on $N$ and $\lambda$) where the PS condition is satisfied, as in the seminal work of Brézis and Nirenberg \cite{BrezisNiremberg}. After that, the proof is completed by showing the mountain pass geometry, introducing the Nehari manifold associated with \eqref{prop} and applying concentration-compactness arguments. In the sequel, we consider \eqref{main} for the different nonlinearities $f$ and prove that each problem has at least one ground state solution.

We observe that the conclusion of Theorem \ref{teoprob2} is similar to that of Theorem 1.1 in Alves, Carrião and Miyagaki \cite{Claudionor}  and  Theorem 1.1 in Miyagaki \cite{Olimpio}. Being more precise, in \cite{Claudionor} the authors have discussed the existence of a positive solution to the semilinear elliptic problem involving critical exponents
\[-\Delta u + V(x) u= \lambda u^{q} +  u^{p}\ \textrm{ in }\ \R^N,\]
where $\lambda>0$ is a parameter, $1<q<p=2^*-1$ and $V:\R^N\to\R$ is a positive continuous function. On its turn, Miyagaki \cite{Olimpio}   has studied the existence of nontrivial solution for the following class of semilinear
elliptic equation in $\R^N$ ($N\geq 3$) involving critical Sobolev exponents
\[-\Delta u + a(x) u= \lambda |u|^{q-1} +  |u|^{p-1} u \ \textrm{ in }\ \R^N,\]
where $1<q<p\leq 2^*-1=\frac{N+2}{N-2}$ and $\lambda>0$ are constants and $a:\R^N\to \R$ is a
continuous function such that $a(x)\geq a_0$ for all $x\in\R^N$, where $a_0>0$ is a constant.

Problems \eqref{prop} and \eqref{main} are then related by showing that the minimax value $d_\lambda$ of the latter satisfies $d_\lambda<c_\lambda$. Once more, concentration-compactness arguments are applied to show the existence of a ground state solution.

This paper is organized as follows. In Section 2 some preliminary results will be established. Section 3, 4 and 5 are then devoted to the proofs of Theorems  \ref{teoprob1},  \ref{teoprob2} and  \ref{teoprob3}, respectively. 	
\section{Preliminary results}
We denote
\[\nabla_{A} u=\nabla u+iA(x)u.\]

We handle problem \eqref{main} in the space
\[H^1_{A,V}(\R^N,\C)=\left\{u\in L^2(\mathbb{R}^N,\mathbb{C})\,:\, \nabla_{A} u\in L^2(\mathbb{R}^N,\mathbb{C}),\ \ \int_{\mathbb{R}^N}V(x) |u(x)|^2\;\dd x<\infty\right\}\]
endowed with the norm
\[\|u\|_{A,V}=\bigg(\int_{\R^N}(|\nabla_{A} u|^2+V(x)|u|^2)\;\dd x \bigg)^{\frac{1}{2}}.\]

Observe that the norm generated by this scalar product is equivalent to the norm obtained by considering $V\equiv 1$, see \cite[Definition 7.20]{LiebLoss}.

If $u\in H^1_{A,V}(\mathbb{R}^N,\mathbb{C})$, then $|u|\in H^1(\mathbb{R}^N)$ and the \emph{diamagnetic inequality} is valid (see \cite{Cingolani} or \cite[Theorem 7.21]{LiebLoss})
\[|\nabla |u|(x)|\leq |\nabla u(x)+iA(x)u(x)|,\ \ \textrm{a.e. } x\in\mathbb{R}^N.\]

As a consequence of the diamagnetic inequality, we have the continuous immersion
\begin{equation}\label{immersion}H^1_{A,V}(\mathbb{R}^N,\mathbb{C})\hookrightarrow L^s(\mathbb{R}^N,\mathbb{C})\end{equation}
for any $s\in [2,\frac{2N}{N-2}]$. We denote $2^*=\frac{2N}{N-2}$ and $\|\cdot\|_s$ the norm in $L^s(\R^N,\C)$.

It is well-known that $C^\infty_c(\mathbb{R}^N,\mathbb{C})$ is dense in $H^1_{A,V_{\mathcal{P}}}(\mathbb{R}^N,\mathbb{C})$, see \cite[Theorem 7.22]{LiebLoss}.

Following Gao and Yang \cite{Minbo1}, we denote by $S_{H,L}$
\begin{align}\label{constantesobol}
S_{H,L}:
&=\inf_{u\;\in\; D^{1,2}(\R^N,\R)\setminus \{0\}}\frac{\displaystyle \int_{\R^N}|\nabla u|^2 \dd x}{\left(\displaystyle\int_{\R^N}\int_{\R^N}\frac{|u(x)^{2_{\alpha}^*}|\ |u(y)|^{2_{\alpha}^*}}{|x-y|^{\alpha}}\dd x\dd y\right)^{\frac{N-2}{2N-\alpha}}}\\
&=\inf_{u\in D^{1,2}_A(\R^N)\setminus\{0\}}\frac{\displaystyle\int_{\R^N}|\nabla_A u|^2\dd x}{\left(\displaystyle\int_{\R^N}\int_{\R^N}\frac{|u(x)^{2_{\alpha}^*}|\ |u(y)|^{2_{\alpha}^*}}{|x-y|^{\alpha}}\dd x\dd y\right)^{\frac{N-2}{2N-\alpha}}}=:S_A,\nonumber
\end{align}
where $D^{1,2}_A(\R^N)=\{u\in L^{2^*}(\R^N,\C)\})\,:\,\nabla_A u\in L^2(\R^N,\C)\}$. The equality between $S_{H,L}$ and $S_A$ was proved in Mukherjee and Sreenadh \cite{Mukherjee}. We remark that $S_A$ is attained if and only if $\textup{rot}\, A=0$ \cite[Theorem 4.1]{Mukherjee}. See also \cite[Theorem 1.1]{ArioliSzulkin}.
	
We state a result proved in \cite{Minbo1}.
\begin{proposition}[Gao and Yang \cite{Minbo1}]\label{propsobol} The constant $S_{H,L}$ defined in \eqref{constantesobol} is achieved if and only if
\[u=C\left(\frac{b}{b^2+|x-a|^2}\right)^{\frac{N-2}{2}},\]
where $C>0$ is a fixed constant, $a\in\R^N$ and $b\in (0,\infty)$ are parameters. Furthermore,
\[S_{H,L}=\frac{S}{C (N,\alpha)^{\frac{N-2}{2N-\alpha}}},\]
where $S$ is the best Sobolev constant of the immersion $D^{1,2}(\R^N)\hookrightarrow L^{2^*}(\R^N)$ and $C (N,\alpha)$ depends on $N$ and $\alpha$.
\end{proposition}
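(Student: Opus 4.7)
The plan is to sandwich the quotient defining $S_{H,L}$ between the classical Sobolev quotient and the sharp Hardy-Littlewood-Sobolev (HLS) inequality, whose joint extremal family turns out to be precisely that of the Aubin-Talenti bubbles.

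First, I would apply HLS with $f = g = |u|^{2_{\alpha}^{*}}$. The admissibility condition $\tfrac{2}{r} + \tfrac{\alpha}{N} = 2$ forces $r = \tfrac{2N}{2N-\alpha}$, and a short algebraic check gives $r \cdot 2_{\alpha}^{*} = 2^{*}$. Thus
\[
\iint \frac{|u(x)|^{2_{\alpha}^{*}}\, |u(y)|^{2_{\alpha}^{*}}}{|x-y|^{\alpha}}\,\dd x\,\dd y \;\leq\; C(N,\alpha)\, \|u\|_{2^{*}}^{2\cdot 2_{\alpha}^{*}},
\]
and, since $2 \cdot 2_{\alpha}^{*} \cdot \tfrac{N-2}{2N-\alpha} = 2$, raising to the power $\tfrac{N-2}{2N-\alpha}$ yields
\[
\biggl(\iint \frac{|u(x)|^{2_{\alpha}^{*}}\, |u(y)|^{2_{\alpha}^{*}}}{|x-y|^{\alpha}}\,\dd x\,\dd y\biggr)^{\!\frac{N-2}{2N-\alpha}} \;\leq\; C(N,\alpha)^{\frac{N-2}{2N-\alpha}}\, \|u\|_{2^{*}}^{2}.
\]
Coupled with the Sobolev inequality $\|\nabla u\|_{2}^{2} \geq S\, \|u\|_{2^{*}}^{2}$, this already delivers the lower bound $S_{H,L} \geq S / C(N,\alpha)^{(N-2)/(2N-\alpha)}$.

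Next, I would show that the Aubin-Talenti bubble $U(x) = C\bigl(b/(b^{2}+|x-a|^{2})\bigr)^{(N-2)/2}$ saturates both inequalities simultaneously. Classical Aubin-Talenti theory gives equality in Sobolev, while Lieb's rigidity theorem characterizes equality in HLS (with $f=g$) by functions of the form $A(b^{2}+|x-a|^{2})^{-N/r}$; because $r \cdot 2_{\alpha}^{*} = 2^{*}$, the exponents of $(b^{2}+|x-a|^{2})$ match and $|U|^{2_{\alpha}^{*}}$ has precisely that shape. Hence $U$ attains $S_{H,L}$, and the identity $S_{H,L} = S / C(N,\alpha)^{(N-2)/(2N-\alpha)}$ follows.

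For the converse, any extremizer of $S_{H,L}$ must force equality in both HLS and Sobolev at once; the Sobolev rigidity theorem then pins it down, up to translation, dilation, and nonzero scalar multiple, as a function of the Aubin-Talenti form, which is automatically compatible with HLS equality by the preceding computation. The delicate step I expect to be the main obstacle is invoking Lieb's classification of HLS extremizers rather than merely the bare inequality, since without it one would obtain only the lower bound together with the existence of some extremizer, without identifying its explicit form.
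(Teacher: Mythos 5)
The paper states this proposition without proof, simply citing Gao and Yang \cite{Minbo1}, and your argument is essentially the one given in that reference. Sandwiching the quotient between the sharp Hardy--Littlewood--Sobolev inequality with $t=r=\tfrac{2N}{2N-\alpha}$ (so that $r\cdot 2_{\alpha}^{*}=2^{*}$ and $2\cdot 2_{\alpha}^{*}\cdot\tfrac{N-2}{2N-\alpha}=2$) and the Sobolev inequality, then verifying that the Aubin--Talenti bubble saturates both simultaneously (the exponent $\tfrac{N-2}{2}\cdot 2_{\alpha}^{*}=\tfrac{2N-\alpha}{2}$ matching Lieb's equality case) and using Sobolev rigidity for the converse, is exactly the standard proof; your exponent bookkeeping is correct.
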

	
If we consider the minimizer for $S$ given by $U(x):=\frac{[N(N-2)]^{\frac{N-2}{4}}}{(|1+|x|^2|)^{\frac{N-2}{2}}}$ (see \cite[Theorem 1.42]{Willem}), then
\[\bar{U}(x)=S^{\frac{(N-\alpha)(2-\alpha)}{4(N+2-\alpha)}}C(N,\alpha)^{\frac{2-N}{2(N+2-\alpha)}}\frac{[N(N-2)]^{\frac{N-2}{4}}}{(|1+|x|^2|)^\frac{N-2}{2}}\]
is the unique minimizer for $S_{H,L}$ that satisfies
\[-\triangle u=\left(\int_{\R^N}\frac{|u|^{2_{\alpha}^*}}{|x-y|^{\alpha}}\dd y\right)|u|^{2_{\alpha}^*-2}u\;\;\textrm{em}\;\; \R^N\]
with
\[\displaystyle \int_{\R^N}|\nabla \bar{U}|^2 \dd x=\displaystyle \int_{\R^N}\int_{\R^N}\frac{|\bar{U}(x)|^{2_{\alpha}^*} |\bar{U}(y)|^{2_{\alpha}^*}}{|x-y|^{\alpha}}\dd x\dd y=S_{H,L}^{\frac{2N-\alpha}{N+2-\alpha}}.\]

\begin{proposition}[Hardy-Littlewood-Sobolev inequality, see \cite{LiebLoss}]\label{hls} Suppose that $f\in L^t(\R^N)$ and $h \in L^r(\R^N)$ for $t,r>1$ and $0<\alpha<N$ satisfying $\frac{1}{t}+\frac{\alpha}{N}+\frac{1}{r}=2$. Then, there exists a sharp constant  $C(t,N,\alpha,r)$, independent of $f$ and $h$, such that
\begin{equation}\label{desig}
\int_{\R^N}\int_{\R^N}\frac{f(x) h(y)}{|x-y|^{\alpha}}\dd x\dd y\leq C(t,N,\alpha,r)\|f\|_t \|h\|_r.
\end{equation}
If $t=r=\frac{2N}{2N-\alpha}$, then \[C(t,N,\alpha,r)=C(N,\alpha)=\pi^{\frac{\alpha}{2}}\frac{\Gamma(\frac{N}{2}-\frac{\alpha}{2})}{\Gamma(N-\frac{\alpha}{2})}\left\{\frac{\Gamma(\frac{N}{2})}{\Gamma(N)}\right\}^{-1+\frac{\alpha}{N}}.\]
In this case there is equality in $\eqref{desig}$ if and only if $h=cf$ for a constant $c$ and
\[f(x)=A(\gamma^2+|x-a|^2)^{-(2N-\alpha)/2}\]
for some $A\in \C$, $0\neq \gamma \in \R$ and $a\in \R^N$.
\end{proposition}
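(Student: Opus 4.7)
The plan is to reduce the double integral to a convolution estimate for the Riesz kernel $k_\alpha(x)=|x|^{-\alpha}$ and treat the sharp constant separately. Writing
$$\int_{\R^N}\int_{\R^N}\frac{f(x)h(y)}{|x-y|^{\alpha}}\,\dd x\,\dd y=\int_{\R^N}h(y)\,(k_\alpha*f)(y)\,\dd y,$$
Hölder's inequality bounds this by $\|h\|_r\|k_\alpha*f\|_{r'}$, so the HLS inequality is equivalent to the convolution estimate $\|k_\alpha*f\|_{r'}\le C\|f\|_t$ under the scaling relation $\tfrac{1}{t}+\tfrac{\alpha}{N}=1+\tfrac{1}{r'}$, which is exactly the condition $\tfrac{1}{t}+\tfrac{\alpha}{N}+\tfrac{1}{r}=2$ given in the statement.

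For the convolution estimate (with \emph{some}, not necessarily sharp, constant) I would observe that $k_\alpha$ lies in the weak Lebesgue space $L^{N/\alpha,\infty}(\R^N)$, since $|\{x:|x|^{-\alpha}>s\}|=\omega_N s^{-N/\alpha}$. The weak Young inequality $\|k*f\|_{r'}\le C\|k\|_{L^{N/\alpha,\infty}}\|f\|_t$ then yields the bound. The weak Young inequality itself is a standard Marcinkiewicz-type argument: split $k_\alpha=k_\alpha\chi_{|x|\le R}+k_\alpha\chi_{|x|>R}$ and apply classical Young's inequality on each piece to control the distribution function of $k_\alpha*f$; optimizing in $R$ depending on the level produces a weak $(t,r')$ estimate for the convolution operator, and a further real interpolation within the open scaling line passes from weak to strong type.

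Identifying the sharp constant $C(N,\alpha)$ in the diagonal case $t=r=2N/(2N-\alpha)$ together with the classification of equality cases is much more delicate. For this I would follow Lieb's conformal approach: the Riesz rearrangement inequality lets us assume $f$ and $h$ are radial symmetric-decreasing; inverse stereographic projection transports the HLS functional to $S^N$, where it becomes invariant under the Möbius group $O(N+1,1)$, restoring compactness; a concentration-compactness argument then produces an extremizer, and its Euler-Lagrange equation combined with the classification of positive solutions of $-\Delta u=u^{(N+\alpha)/(N-\alpha)}$ on $\R^N$ forces $f(x)=A(\gamma^2+|x-a|^2)^{-(2N-\alpha)/2}$; substituting this profile and computing the resulting integral in spherical coordinates yields the explicit expression for $C(N,\alpha)$ in terms of Gamma functions.

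The main obstacle is precisely this last step: getting \emph{some} finite constant reduces cleanly to standard weak-type interpolation, but sharpness requires breaking the dilation/translation non-compactness via conformal symmetry and classifying the extremizers, which is genuinely nontrivial. Since the applications in the body of this paper only require a finite constant, in practice I would present the interpolation proof in detail and simply invoke Lieb's theorem for the sharp form and the characterization of extremizers.
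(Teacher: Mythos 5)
The paper offers no proof of this proposition: it is quoted verbatim as a classical result (Theorem 4.3 in the cited book of Lieb and Loss), so there is nothing internal to the paper to compare your argument against. Judged on its own, your outline is sound and follows one of the two standard routes. The reduction to the convolution bound $\|k_\alpha * f\|_{r'}\le C\|f\|_t$ via H\"older is correct, and the hypothesis $\frac{1}{t}+\frac{\alpha}{N}+\frac{1}{r}=2$ with $t,r>1$ and $0<\alpha<N$ is exactly what places $(t,r')$ in the open range where the weak Young inequality for $k_\alpha\in L^{N/\alpha,\infty}$ and Marcinkiewicz interpolation apply, yielding a finite (non-sharp) constant; note that Lieb and Loss themselves give instead a direct layer-cake/rearrangement proof, so even relative to the cited source your route is the alternative, interpolation-theoretic one. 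For the sharp constant and the equality cases you correctly isolate the genuinely hard content (Riesz rearrangement, stereographic projection, conformal invariance, existence and classification of extremizers) and defer to Lieb's theorem; the one imprecision is the claim that the classification reduces to positive solutions of $-\Delta u=u^{(N+\alpha)/(N-\alpha)}$ --- that local equation is equivalent to the Euler--Lagrange equation of the HLS functional only in the special case $\alpha=N-2$, and for general $0<\alpha<N$ one must classify solutions of the integral equation directly (Lieb's original duplication/rearrangement argument, or the moving-plane method for integral equations of Chen--Li--Ou). Since the body of the paper only ever uses the inequality with a finite constant together with the explicit diagonal value of $C(N,\alpha)$, citing Lieb--Loss for the sharp form, as both you and the authors do, is the appropriate resolution.
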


\begin{lemma}\label{lK}
Let $U\subseteqq \mathbb{R}^{N}$ be any open set. For $1<p<\infty$, let $(f_n)$ be a bounded sequence in $L^s(U,\mathbb{C})$ such that $f_n(x)\to f(x)$ a.e. Then $f_n\rightharpoonup f$ in $L^s(U,\C)$.
\end{lemma}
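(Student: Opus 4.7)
The plan is to combine reflexivity of $L^s(U,\mathbb{C})$ (for $1<s<\infty$, reading the hypothesis $1<p<\infty$ as a typo for $s$) with Egorov's theorem, via the standard subsequence principle: it suffices to show that every subsequence of $(f_n)$ admits a further subsequence weakly converging to $f$, and then the whole sequence converges weakly to $f$.

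First I would check that $f \in L^s(U,\mathbb{C})$. Since $|f_n|\to|f|$ a.e., Fatou's lemma yields $\|f\|_s \leq \liminf_n \|f_n\|_s < \infty$, so $f$ lies in $L^s$ and is a candidate weak limit. Next, reflexivity of $L^s$ gives, from any subsequence of $(f_n)$, a further subsequence (still denoted $f_n$) with $f_n\rightharpoonup g$ for some $g\in L^s(U,\mathbb{C})$; weak lower semicontinuity also gives $\|g\|_s \leq \liminf_n \|f_n\|_s \leq M$ for some constant $M$ independent of the choice of subsequence.

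The core step is to identify $g$ with $f$, and this is where the actual work lies. Because $U$ is $\sigma$-finite, it suffices to show $\int_E g\,\dd x = \int_E f\,\dd x$ for every measurable $E\subseteq U$ with $|E|<\infty$. Fix such $E$ and $\epsilon>0$. Since $\chi_E \in L^{s'}(U)$ (with $s'=s/(s-1)$), weak convergence gives $\int_E f_n\,\dd x \to \int_E g\,\dd x$. Applying Egorov's theorem to $f_n\to f$ on $E$ produces a measurable $E_\epsilon\subseteq E$ with $|E\setminus E_\epsilon|<\epsilon$ on which $f_n \to f$ uniformly; hence $\int_{E_\epsilon} f_n\,\dd x \to \int_{E_\epsilon} f\,\dd x$. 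The remainder is controlled by Hölder's inequality: for any $h \in \{f_n, f, g\}$,
\[
\Bigl| \int_{E\setminus E_\epsilon} h\,\dd x \Bigr| \leq \|h\|_s \, |E\setminus E_\epsilon|^{1/s'} \leq M\, \epsilon^{1/s'},
\]
where the uniform bound $\|h\|_s\leq M$ uses the hypothesis for $f_n$ and the Fatou/weak-lsc bounds for $f$ and $g$. Sending $n\to\infty$ and then $\epsilon\to 0$ gives $\int_E g\,\dd x = \int_E f\,\dd x$, so $g=f$ a.e.

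Having shown every weakly convergent subsequence of $(f_n)$ converges weakly to the same limit $f$, and that $(f_n)$ is bounded in the reflexive space $L^s(U,\mathbb{C})$, the subsequence principle forces $f_n \rightharpoonup f$ in $L^s(U,\mathbb{C})$. The only genuinely nontrivial step is the identification $g=f$; it is the passage from pointwise a.e. information to testing against $\chi_E$ that requires Egorov's theorem together with the tail bound via the uniform $L^s$ norm, which is why the endpoint cases $s=1$ and $s=\infty$ are excluded.
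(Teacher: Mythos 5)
Your proof is correct and is essentially the classical argument the paper itself defers to (it gives no proof, citing instead Kavian, Lemme 4.8, Chapitre 1, whose proof has exactly this structure): boundedness and reflexivity yield weakly convergent subsequences, Egorov's theorem plus a H\"older tail estimate on finite-measure sets identify each weak limit with the a.e.\ limit $f$, and the subsequence principle upgrades this to the whole sequence. You are also right that the hypothesis ``$1<p<\infty$'' in the statement should read $1<s<\infty$; excluding the endpoints is genuinely needed (reflexivity fails for $s=1$, where the conclusion itself is false).
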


The proof of Lemma \ref{lK} only adapts the arguments given for the real case, as in \cite[Lemme 4.8, Chapitre 1]{Kavian}.

\section{The case $f(x,u)=\left(\frac{1}{|x|^{\alpha}}*|u|^{p}\right)|u|^{p-2} u$}\label{Section3}

\subsection{The periodic problem}\label{periodic} In this subsection we deal with problem \eqref{prop} for $f(x,u)$ as above, that is,
\begin{equation}\label{prop1.1}
	-(\nabla+iA(x))^2u+  V_{\mathcal{P}}(x)u =\left(\frac{1}{|x|^{\alpha}}*|u|^{2_{\alpha}^*})\right)|u|^{2_{\alpha}^*-2} u + \lambda\left(\frac{1}{|x|^{\alpha}}*|u|^{p})\right)|u|^{p-2} u,
\end{equation}
where $\frac{2N-\alpha}{N}<p<2_{\alpha}^*$.

We consider the space
\[H^1_{A,V_{\mathcal{P}}}(\mathbb{R}^N,\mathbb{C})=\left\{u\in L^2(\mathbb{R}^N,\mathbb{C})\,:\, \nabla_{A} u\in L^2(\mathbb{R}^N,\mathbb{C})\right\}\]
endowed with scalar product
\[\langle u,v\rangle_{A,V_{\mathcal{P}}}=\mathfrak{Re}\int_{\mathbb{R}^N}\left(\nabla_A u\cdot \overline{\nabla_A v}+V_{\mathcal{P}}(x)u\bar v\right)\dd x\]
and, therefore
\[\|u\|^2_{A,V_{\mathcal{P}}}=\int_{\mathbb{R}^N}\left(|\nabla_A u|^2+V_{\mathcal{P}}|u|^2\right)\dd x.\]

We observe that the energy functional $J_{A,V_{\mathcal{P}}}$  on $H^1_{A,V_{\mathcal{P}}}(\R^N,\C)$ associated with \eqref{prop1.1} is given by
\begin{equation*}
J_{A,V_{\mathcal{P}}}(u):=\frac{1}{2}\Vert u\Vert^2_{A,V_{\mathcal{P}}}-\frac{1}{2\cdot 2_{\alpha}^*}D(u)-\frac{\lambda}{2p}B(u),
\end{equation*}
where
\[B(u)=\int_{\R^N}\bigg(\frac{1}{|x|^{\alpha}}*|u|^p\bigg)|u|^p\dd x= \int_{\R^N}\int_{\R^N}\frac{|u(x)^p| |u(y)|^p}{|x-y|^{\alpha}}\dd x\dd y\]
and
\[D(u)=\int_{\R^N}\bigg(\frac{1}{|x|^{\alpha}}*|u|^{2_{\alpha}^*}\bigg)|u|^{2_{\alpha}^*}\dd x=\int_{\R^N}\int_{\R^N}\frac{|u(x)^{2_{\alpha}^*}| |u(y)|^{2_{\alpha}^*}}{|x-y|^{\alpha}}\dd x\dd y.\]

\begin{remark}\label{obs3}\rm
Notice that, by the Hardy-Littlewood-Sobolev inequality, the integral
\[\displaystyle \int_{\R^N}\int_{\R^N}\frac{|u(x)^s| |u(y)|^s}{|x-y|^{\alpha}}\dd x\dd y\]
is well defined if
\begin{align*} \frac{2N-\alpha}{N}\leq s\leq\frac{2N-\alpha}{N-2}.
\end{align*}
Here, as also in \cite{AlvesMinbo1}, $\frac{2N-\alpha}{N}$ is called the lower critical exponent and $2_{\alpha}^*=\frac{2N-\alpha}{N-2}$ the upper critical exponent. This lead us to say that \eqref{main} is a critical nonlocal elliptic equation.
\end{remark}

The Hardy-Littlewood-Sobolev inequality implies that
\begin{equation}\label{desigb}
|B(u)|\leq C_1(N,\alpha)\|u\|^{2p}_{p}
\end{equation}
and
\begin{equation}\label{desigd}
|D(u)|\leq C_2(N,\alpha)\|u\|^{2\cdot 2_{\alpha}^*}_{2_{\alpha}^*}
\end{equation}
for constants $C_1(N,\alpha)$ and $C_2(N,\alpha)$. For any $u \in H^1_{A,V_{\mathcal{P}}}(\R^N,\C)$ the immersions \eqref{immersion} imply that $B$ and $D$ are well-defined. Consequently, $J_{A,V_{\mathcal{P}}}$ is well-defined.

Observe that 
\begin{equation}\label{dunshl}
S_{A}=\inf_{u\;\in\; D^{1,2}_A(\R^N)\setminus \{0\}}\frac{\int_{\R^N}|\nabla_A u|^2 \dd x}{D(u)^{\frac{N-2}{2N-\alpha}}}.
\end{equation}
	
\begin{definition}
A function $u\in H^1_{A,V_{\mathcal{P}}}(\R^N,\C)$ is a weak solution of \eqref{prop1.1} if
\[\langle u,\psi\rangle_{A,V_{\mathcal{P}}}-\mathfrak{Re}\int_{\mathbb{R}^N}\left(\frac{1}{|x|^{\alpha}}*|u|^{2_{\alpha}^*})\right)|u|^{2_{\alpha}^*-2}u\bar{\psi}\,\dd x-\lambda\; \mathfrak{Re}\int_{\mathbb{R}^N}\left(\frac{1}{|x|^{\alpha}}*|u|^p)\right)|u|^{p-2}u\bar{\psi}\,\dd x=0\]
for all $\psi\in H^1_{A,V_{\mathcal{P}}}(\R^N,\C)$.
\end{definition}
	
Since the derivative of the energy functional $J_{A,V_{\mathcal{P}}}$ is given by
\begin{align*}
J'_{A,V_{\mathcal{P}}}(u)\cdot \psi
&=\langle u,\psi\rangle_{A,V_{\mathcal{P}}}-\mathfrak{Re}\int_{\mathbb{R}^N}\bigg(\frac{1}{|x|^{\alpha}}*|u|^{2_{\alpha}^*})\bigg)|u|^{2_{\alpha}^*-2}u\bar{\psi}\,\dd x-\lambda\; \mathfrak{Re}\int_{\mathbb{R}^N}\bigg(\frac{1}{|x|^{\alpha}}*|u|^p)\bigg)|u|^{p-2}u\bar{\psi}\,\dd x,
\end{align*}
we see that critical points of $J_{A,V_{\mathcal{P}}}$ are weak solutions of \eqref{prop1.1}.
	
Note that, if $\psi=u$  we obtain
\begin{equation}\label{j'(u)u}
J'_{A,V_{\mathcal{P}}}(u)\cdot u:=\Vert u\Vert_{A,V_{\mathcal{P}}}^{2}-D(u)-\lambda B(u).
\end{equation}
	
\begin{lemma}\label{gpm}
The functional $J_{A,V_{\mathcal{P}}}$ satisfies the mountain pass geometry. Precisely,
\begin{enumerate}
\item [$(i)$] there exist $\rho,\delta>0$ such that $J_{A,V_{\mathcal{P}}}\big|_S\geq \delta>0$ for any $u\in \mathcal{S}$, where
\[\mathcal{S}=\{u\in H^1_{A,V_{\mathcal{P}}}(\mathbb{R}^{N},\mathbb{C})\,:\, \|u\|_{A,V_{\mathcal{P}}}=\rho\};\]
\item [$(ii)$] for any $u_0\in H^1_{A,V_{\mathcal{P}}}(\mathbb{R}^{N},\mathbb{C})\setminus\{0\}$ there exists $\tau\in (0,\infty)$ such that $\|\tau u_0\|_{V_{\mathcal{P}}}>\rho$ and $J_{A,V_{\mathcal{P}}}(\tau u_0) <0$.
\end{enumerate}
\end{lemma}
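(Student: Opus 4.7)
The plan is to handle the two items separately, both via direct estimates on the functional.

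For item $(i)$, I would bound the two nonlocal nonlinearities from above by powers of $\|u\|_{A,V_{\mathcal{P}}}$ strictly greater than $2$, so that the quadratic term of $J_{A,V_{\mathcal{P}}}$ dominates near the origin. Starting from the Hardy--Littlewood--Sobolev bounds (\ref{desigb}) and (\ref{desigd}) and combining them with the continuous immersion (\ref{immersion}), I would obtain inequalities of the form
\[
B(u)\le \tilde C_1\|u\|_{A,V_{\mathcal{P}}}^{2p},\qquad D(u)\le \tilde C_2\|u\|_{A,V_{\mathcal{P}}}^{2\cdot 2_{\alpha}^{*}}.
\]
The step that needs care is checking that the Lebesgue exponents arising from HLS, namely $\tfrac{2pN}{2N-\alpha}$ (for $B$) and $2^{*}$ (for $D$), both lie in $[2,2^{*}]$ so that the embedding applies; this uses precisely the assumption $\tfrac{2N-\alpha}{N}<p<2_{\alpha}^{*}$ and the definition of $2^{*}_\alpha$. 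Substituting,
\[
J_{A,V_{\mathcal{P}}}(u)\ \ge\ \tfrac{1}{2}\|u\|_{A,V_{\mathcal{P}}}^{2}-\tfrac{\lambda \tilde C_1}{2p}\|u\|_{A,V_{\mathcal{P}}}^{2p}-\tfrac{\tilde C_2}{2\cdot 2_{\alpha}^{*}}\|u\|_{A,V_{\mathcal{P}}}^{2\cdot 2_{\alpha}^{*}},
\]
and since $2p>2$ and $2\cdot 2_{\alpha}^{*}>2$, the right-hand side is $\|u\|^{2}(\tfrac{1}{2}-o(1))$ as $\|u\|\to 0$. Choosing $\rho>0$ sufficiently small then yields $J_{A,V_{\mathcal{P}}}(u)\ge \delta:=\rho^{2}/4$ on the sphere $\mathcal{S}$.

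For item $(ii)$, I would exploit the explicit scaling of the terms. Fix $u_{0}\in H^{1}_{A,V_{\mathcal{P}}}\setminus\{0\}$ and evaluate along the ray $\tau>0$:
\[
J_{A,V_{\mathcal{P}}}(\tau u_{0})=\tfrac{\tau^{2}}{2}\|u_{0}\|_{A,V_{\mathcal{P}}}^{2}-\tfrac{\tau^{2\cdot 2_{\alpha}^{*}}}{2\cdot 2_{\alpha}^{*}}D(u_{0})-\tfrac{\lambda\tau^{2p}}{2p}B(u_{0}).
\]
Since $u_{0}\not\equiv 0$, the integrands of $D(u_{0})$ and $B(u_{0})$ are nonnegative and not identically zero, so both quantities are strictly positive. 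Because $2\cdot 2_{\alpha}^{*}>2$, the cubic-or-higher negative term drives $J_{A,V_{\mathcal{P}}}(\tau u_{0})\to -\infty$ as $\tau\to\infty$, and a sufficiently large $\tau$ produces both $\|\tau u_{0}\|_{A,V_{\mathcal{P}}}>\rho$ and $J_{A,V_{\mathcal{P}}}(\tau u_{0})<0$.

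There is no serious obstacle here; this is a textbook mountain pass verification. The only point demanding a little attention is the bookkeeping of Lebesgue exponents inside HLS for $(i)$: one must be sure that applying HLS with a symmetric choice $t=r=\tfrac{2N}{2N-\alpha}$ places the resulting integrands $|u|^{p}$ and $|u|^{2_{\alpha}^{*}}$ into spaces $L^{s}$ with $s\in[2,2^{*}]$, which is exactly what the constraint $\tfrac{2N-\alpha}{N}<p<2_{\alpha}^{*}$ guarantees via Remark \ref{obs3}.
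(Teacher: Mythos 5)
Your proposal is correct and follows essentially the same route as the paper: item $(i)$ via the Hardy--Littlewood--Sobolev bounds \eqref{desigb}, \eqref{desigd} combined with the immersion \eqref{immersion} to dominate $B$ and $D$ by powers of $\|u\|_{A,V_{\mathcal{P}}}$ strictly larger than $2$, and item $(ii)$ via the explicit scaling $B(\tau u_0)=\tau^{2p}B(u_0)$, $D(\tau u_0)=\tau^{2\cdot 2_{\alpha}^{*}}D(u_0)$ forcing $J_{A,V_{\mathcal{P}}}(\tau u_0)\to-\infty$. Your extra check that the Lebesgue exponents produced by HLS lie in $[2,2^{*}]$ is a point the paper leaves implicit, but it does not change the argument.
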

\begin{proof} Inequalities  \eqref{desigb} and \eqref{desigd} yields
\[J_{A,V_{\mathcal{P}}}(u)\geq \frac{1}{2}\|u\|_{A,V_{\mathcal{P}}}^2-\frac{C_2(\alpha,N)}{2\cdot 2_{\alpha}^*}\|u\|_{A,V_{\mathcal{P}}}^{2\cdot 2_{\alpha}^*}-\frac{\lambda C_1(\alpha,N)}{2p}\|u\|_{A,V_{\mathcal{P}}}^{2p},\]
thus implying ($i$) if we take $\|u\|_{A,V_{\mathcal{P}}}=\rho>0$ sufficiently small.
		
In order to prove ($ii$), fix $u_0\in H^1_{A,V_{\mathcal{P}}}(\mathbb{R}^{N},\mathbb{C})\setminus\{0\}$  and consider the function $g_{u_0}\colon(0,\infty)\to\mathbb{R}$ given by
\[g_{u_0}(t):=J_{A,V_{\mathcal{P}}}(tu_0)=\frac{1}{2} \|tu_0\|^2_{A,V_{\mathcal{P}}}- \frac{1}{2\cdot 2^*_{\alpha}}D(t u_0)-\frac{\lambda}{2p}B(t u_0).\]
		
We have
\begin{align*}
B(tu_0)=
& t^{2p}\int_{\R^N}\int_{\R^N}\frac{|u_0(x)^p| |u_0(y)|^p}{|x-y|^{\alpha}}\dd x\dd y=t^{2p}B(u_0)
\end{align*}
and
\begin{align*}
D(tu_0)=
&t^{2\cdot 2^*_{\alpha}}\int_{\R^N}\int_{\R^N}\frac{| u_0(x)|^{2_{\alpha}^*} |u_0(y)|^{2_{\alpha}^*}}{|x-y|^{\alpha}}\dd x\dd y=t^{2 \cdot 2_{\alpha}^*}D(u_0).
\end{align*}

Thus,
\begin{align*}
g_{u_0} (t)&=\frac{1}{2} t^2\|u_0\|^2_{A,V_{\mathcal{P}}}- \frac{1}{2\cdot 2^*_{\alpha}}t^{2\cdot 2^*_{\alpha}}D( u_0)-\frac{\lambda}{2p}t^{2p}B( u_0)\\
&=\frac{1}{2} t^{2\cdot2^*_{\alpha}}\left(\frac{\|u_0\|^2_{A,V_{\mathcal{P}}}}{t^{(2( 2^*_{\alpha}-1))}}- \frac{1}{2^*_{\alpha}} D( u_0)-\frac{\lambda}{p}\frac{B( u_0)}{t^{(2(2^*_{\alpha}-p)) }}\right)
\end{align*}

Since $1<p<2^*_{\alpha}$, we have
\[\lim_{t\to +\infty}J_{A,V_{\mathcal{P}}}(t u_0)=-\infty\]
completing the proof of ($ii$). 
$\hfill\Box$\end{proof}\vspace*{.3cm}
	
The mountain pass theorem without the PS condition (see \cite[Theorem 1.15]{Willem}) yields a Palais-Smale sequence $(u_n)\subset H^1_{A,V_{\mathcal{P}}}(\mathbb{R}^{N},\mathbb{C})$ such that
\begin{equation*}J'_{A,V_{\mathcal{P}}}(u_n)\to 0\qquad\textrm{and}\qquad J_{A,V_{\mathcal{P}}}(u_n)\to c_{\lambda},\end{equation*}
where
\begin{equation*}
c_{\lambda}=\inf_{\alpha\in \Gamma}\max_{t\in [0,1]}J_{A,V_{\mathcal{P}}}(\gamma(t)),
\end{equation*}
and $\Gamma=\left\{\gamma\in C^1\left([0,1],H^1_{A,V_{\mathcal{P}}}(\mathbb{R}^{N},\mathbb{C})\right)\,:\,\gamma(0)=0,\, J_{A,V_{\mathcal{P}}}(\gamma(1))<0\right\}$.

\begin{lemma} Suppose that $u_n\rightharpoonup u_0$ in $H^1_{A,V_{\mathcal{P}}}(\R^N,\C)$. Then
\begin{equation}\label{appliclieb}
\frac{1}{|x|^{\alpha}}*|u_n|^{s}\rightharpoonup\frac{1}{|x|^{\alpha}}*|u_0|^{s}\;\; \textrm{in}\;\;L^{\frac{2N}{\alpha}}(\R^N),
\end{equation}
for all $\frac{2N-\alpha}{N}\leq s\leq2_{\alpha}^*$.
\end{lemma}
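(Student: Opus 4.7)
My plan is to reduce the statement to two standard facts: (a) convolution with $|x|^{-\alpha}$ is a bounded linear operator between two Lebesgue spaces (Hardy–Littlewood–Sobolev), so it is automatically weak-to-weak continuous; and (b) Lemma \ref{lK}, which upgrades a.e.\ convergence together with boundedness into weak convergence in $L^{q}$. The exponent bookkeeping is what makes the range $\frac{2N-\alpha}{N}\leq s\leq 2_\alpha^\ast$ appear naturally.

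First I would observe the range of exponents. If $u\in L^{2Ns/(2N-\alpha)}(\R^N,\C)$, then $|u|^{s}\in L^{2N/(2N-\alpha)}(\R^N)$ with $\||u|^{s}\|_{2N/(2N-\alpha)}=\|u\|_{2Ns/(2N-\alpha)}^{s}$. The condition $2\leq 2Ns/(2N-\alpha)\leq 2^\ast$ is exactly $\frac{2N-\alpha}{N}\leq s\leq 2_\alpha^\ast$, so the immersion \eqref{immersion} yields
\[
\bigl\| |u|^{s}\bigr\|_{2N/(2N-\alpha)} \leq C\,\|u\|_{A,V_{\mathcal{P}}}^{s}.
\]
Since $(u_n)$ is weakly convergent in $H^{1}_{A,V_{\mathcal{P}}}(\R^N,\C)$ it is bounded, so $(|u_n|^{s})$ is bounded in $L^{2N/(2N-\alpha)}(\R^N)$.

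Next I would pass to a subsequence so that $u_n\to u_0$ a.e.\ in $\R^N$. This comes from weak convergence in $H^{1}_{A,V_{\mathcal{P}}}$ combined with the Rellich–Kondrachov theorem applied to $|u_n|$ (via the diamagnetic inequality) on balls, together with a diagonal argument. Then $|u_n|^{s}\to |u_0|^{s}$ a.e.\ and, by Lemma \ref{lK},
\[
|u_n|^{s}\;\rightharpoonup\;|u_0|^{s}\quad\text{in }L^{2N/(2N-\alpha)}(\R^N).
\]

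Finally, by the Hardy–Littlewood–Sobolev inequality (Proposition \ref{hls}) applied with $t=2N/(2N-\alpha)$ and $r=(2N/\alpha)'$, the Riesz potential
\[
I_{\alpha}\colon L^{2N/(2N-\alpha)}(\R^N)\to L^{2N/\alpha}(\R^N), \qquad I_{\alpha}f=\frac{1}{|x|^{\alpha}}\ast f,
\]
is a bounded linear operator (this is the dual formulation of HLS, obtained by writing $\int (I_\alpha f)\,g$ as a double integral). A bounded linear operator between Banach spaces is weak-to-weak continuous, hence
\[
\frac{1}{|x|^{\alpha}}\ast |u_n|^{s}\;\rightharpoonup\;\frac{1}{|x|^{\alpha}}\ast |u_0|^{s}\quad\text{in }L^{2N/\alpha}(\R^N),
\]
along the chosen subsequence. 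Since the weak limit is independent of the subsequence, a standard Urysohn argument promotes this to weak convergence of the full sequence.

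The main obstacle, if any, is just the careful exponent computation verifying that the range $\frac{2N-\alpha}{N}\leq s\leq 2_\alpha^\ast$ is the exact one for which $|u|^{s}\in L^{2N/(2N-\alpha)}$ is controlled by the $H^{1}_{A,V_{\mathcal{P}}}$-norm via \eqref{immersion}; the rest is essentially HLS plus weak-continuity of bounded linear operators.
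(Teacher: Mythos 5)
Your proposal is correct and follows essentially the same route as the paper: boundedness of $|u_n|^{s}$ in $L^{\frac{2N}{2N-\alpha}}(\R^N)$ via the immersion \eqref{immersion}, Lemma \ref{lK} to obtain $|u_n|^{s}\rightharpoonup|u_0|^{s}$ there, and the Hardy--Littlewood--Sobolev boundedness of the map $w\mapsto \frac{1}{|x|^{\alpha}}*w$ from $L^{\frac{2N}{2N-\alpha}}(\R^N)$ to $L^{\frac{2N}{\alpha}}(\R^N)$ to conclude. The only minor divergence is the final step, where the paper invokes Lemma \ref{lK} a second time (which would strictly require a.e.\ convergence of the convolutions), whereas your appeal to the weak-to-weak continuity of a bounded linear operator is the cleaner way to finish.
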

\begin{proof}
In this proof we adapt some ideas of \cite{AlvesMinbo2}. We can suppose that $|u_n(x)|^s \to |u_0(x)|^s$ a.e. in $\mathbb{R}^N$ and, as consequence of the immersion \eqref{immersion}, $|u_n|^s$ is bounded in $L^{\frac{2N}{2N-\alpha}}(\mathbb{R}^N)$. Thus, Lemma \ref{lK} allows us to conclude that \[|u_n(x)|^s\rightharpoonup |u_0(x)|^s\;\;\textrm{in}\;L^{\frac{2N}{2N-\alpha}}(\R^N,\C).\]
	
The Hardy-Littlewood-Sobolev inequality allows us to conclude that
\[\frac{1}{|x|^\alpha}*w(x)\in L^{\frac{2N}{\alpha}}(\mathbb{R}^N)\]
for all $w\in L^{\frac{2N}{2N-\alpha}}(\mathbb{R}^N)$. So, we have a continuous linear map from $L^{\frac{2N}{2N-\alpha}}(\mathbb{R}^N)$ to $L^{\frac{2N}{\alpha}}(\mathbb{R}^N)$. A new application of Lemma \ref{lK} yields \eqref{appliclieb}.
$\hfill\Box$\end{proof}

\begin{corollary}\label{lemmaconvderiv}Suppose that $u_n\rightharpoonup u_0$ and consider 	
\[B'(u_n)\cdot \psi=\mathfrak{Re}\int_{\mathbb{R}^N}\bigg(\frac{1}{|x|^{\alpha}}*|u_n|^p)\bigg)|u_n|^{p-2}u_n\bar{\psi}\,\dd x\]
and
\[D'(u_n)\cdot\psi=\mathfrak{Re}\int_{\mathbb{R}^N}\bigg(\frac{1}{|x|^{\alpha}}*|u_n|^{2_{\alpha}^*})\bigg)|u_n|^{2_{\alpha}^*-2}u_n\bar{\psi}\,\dd x,\]
for $\psi\in C^\infty_c(\mathbb{R}^{N},\mathbb{C})$. Then $B'(u_n)\cdot \psi\to B'(u_0)\cdot \psi$ and  $D'(u_n)\cdot \psi\to D'(u_0)\cdot \psi$.
\end{corollary}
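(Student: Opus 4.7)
\medskip
\noindent\textbf{Proof proposal.} The strategy is the classic weak$\times$strong duality in the two $L^r$ spaces naturally paired by Hardy--Littlewood--Sobolev: the convolution $\frac{1}{|x|^\alpha}*|u_n|^s$ converges weakly in $L^{2N/\alpha}(\R^N)$ by the preceding lemma, and I want to show that the complementary factor $|u_n|^{s-2}u_n\bar\psi$ converges strongly in $L^{2N/(2N-\alpha)}(\R^N,\C)$. Pairing these two convergences yields the convergence of $B'(u_n)\cdot\psi$ (take $s=p$) and $D'(u_n)\cdot\psi$ (take $s=2_{\alpha}^*$), after taking real parts.

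First I would fix $\psi\in C^\infty_c(\R^N,\C)$ with compact support $K$ and exploit the fact that on the bounded set $K$ the magnetic Sobolev norm $\|\cdot\|_{A,V_{\mathcal{P}}}$ is equivalent to the standard complex $H^1(K,\C)$-norm (since $A$ is continuous and $V_{\mathcal{P}}$ is bounded on compacts). Thus the weak convergence $u_n\rightharpoonup u_0$ in $H^1_{A,V_{\mathcal{P}}}(\R^N,\C)$ implies boundedness in $H^1(K,\C)$, and Rellich--Kondrachov gives, up to subsequence, $u_n\to u_0$ strongly in $L^q(K,\C)$ for every $q\in[1,2^*)$.

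Next I check the exponents. Choose $q:=(s-1)\cdot\frac{2N}{2N-\alpha}$. For $s=p\in(\frac{2N-\alpha}{N},2_\alpha^*)$ and for $s=2_\alpha^*$ one has
\[q=(s-1)\frac{2N}{2N-\alpha}\le(2_\alpha^*-1)\frac{2N}{2N-\alpha}=\frac{2N(N+2-\alpha)}{(N-2)(2N-\alpha)}<\frac{2N}{N-2}=2^*,\]
the last inequality being equivalent to $N>2$. Hence $u_n\to u_0$ in $L^q(K,\C)$. Continuity of the Nemytskii operator $z\mapsto|z|^{s-2}z$ from $L^q(K,\C)$ into $L^{q/(s-1)}(K,\C)=L^{2N/(2N-\alpha)}(K,\C)$, which follows from the pointwise estimate $\bigl||z|^{s-2}z-|w|^{s-2}w\bigr|\le C(|z|^{s-2}+|w|^{s-2})|z-w|$ and Vitali's theorem, gives $|u_n|^{s-2}u_n\to|u_0|^{s-2}u_0$ in $L^{2N/(2N-\alpha)}(K,\C)$. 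Multiplication by $\bar\psi\in L^\infty(K)$ with support in $K$ preserves this and extends the convergence to all of $\R^N$ by zero.

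Finally, the previous lemma with $s$ gives $\frac{1}{|x|^\alpha}*|u_n|^s\rightharpoonup\frac{1}{|x|^\alpha}*|u_0|^s$ in $L^{2N/\alpha}(\R^N)$. Pairing the weakly convergent sequence against the strongly convergent one in the dual $L^{2N/(2N-\alpha)}(\R^N,\C)$ yields
\[\int_{\R^N}\Bigl(\frac{1}{|x|^\alpha}*|u_n|^s\Bigr)|u_n|^{s-2}u_n\bar\psi\,\dd x\longrightarrow\int_{\R^N}\Bigl(\frac{1}{|x|^\alpha}*|u_0|^s\Bigr)|u_0|^{s-2}u_0\bar\psi\,\dd x,\]
and taking real parts produces $B'(u_n)\cdot\psi\to B'(u_0)\cdot\psi$ (with $s=p$) and $D'(u_n)\cdot\psi\to D'(u_0)\cdot\psi$ (with $s=2_\alpha^*$). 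The only delicate point is the Nemytskii continuity in the complex setting and the verification that the admissible exponent $(s-1)\cdot\tfrac{2N}{2N-\alpha}$ stays strictly below $2^*$ even in the critical case $s=2_\alpha^*$; everything else is routine.
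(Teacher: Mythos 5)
Your argument is correct, but it is not the route the paper takes. The paper's proof stays entirely at the level of weak convergence: it uses the immersion to get boundedness of $|u_n|^{s-2}u_n$ in $L^{\frac{2N}{N+2-\alpha}}(\R^N,\C)$, applies Lemma \ref{lK} to conclude $|u_n|^{s-2}u_n\rightharpoonup|u_0|^{s-2}u_0$ there, and then ``combines'' this with the weak convergence of the convolution in $L^{\frac{2N}{\alpha}}$ to assert weak convergence of the product in $L^{\frac{2N}{N+2}}(\R^N)$, finally testing against $\psi$. You instead upgrade one factor to \emph{strong} convergence: restricting to the compact support of $\psi$, Rellich--Kondrachov gives $u_n\to u_0$ in $L^q(K,\C)$ for $q<2^*$, Nemytskii continuity gives $|u_n|^{s-2}u_n\bar\psi\to|u_0|^{s-2}u_0\bar\psi$ in $L^{\frac{2N}{2N-\alpha}}$, and you conclude by the clean weak--strong duality pairing with $\frac{1}{|x|^\alpha}*|u_n|^s\rightharpoonup\frac{1}{|x|^\alpha}*|u_0|^s$ in $L^{\frac{2N}{\alpha}}$. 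Your version buys rigor at the delicate spot: a product of two weakly convergent sequences does not in general converge weakly to the product of the limits, and the paper's step implicitly needs something extra (e.g.\ a.e.\ convergence of the convolution term to re-invoke Lemma \ref{lK}), whereas weak-against-strong pairing is unconditional. The costs on your side are minor bookkeeping: Rellich--Kondrachov yields convergence only along a subsequence, so you should close with the standard ``every subsequence has a further subsequence with the same limit'' argument to recover convergence of the full sequence; and for $p$ close to the lower critical exponent $\frac{2N-\alpha}{N}$ your exponent $q=(p-1)\frac{2N}{2N-\alpha}$ can fall below $1$, which is harmless on the bounded set $K$ (Vitali plus $L^2(K)$-convergence suffices) but should be acknowledged rather than folded into the range $q\in[1,2^*)$. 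Your verification that $(2_\alpha^*-1)\frac{2N}{2N-\alpha}<2^*$ reduces to $N>2$ is correct.
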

\begin{proof}
The immersion \eqref{immersion} guarantees that $|u_n|^{p - 2}u_n$ is bounded in $ L^{\frac{2N}{N+2-\alpha}}(\R^N, \C)$. Since we can suppose that $|u_n(x)|^p \to |u_0(x)|^p$ a.e. in $\mathbb{R}^N$, by applying Lemma \ref{lK}, we conclude that
\begin{equation}\label{appliclieb1}
|u_n|^{p- 2}u_n\rightharpoonup |u_0|^{p - 2}u\quad\;\textrm{in}\;\; L^{\frac{2N}{N+2-\alpha}}(\R^N, \C)
\end{equation}
for all $\frac{2N-\alpha}{N}\leq p\leq2_{\alpha}^*$, as $n\to+\infty$.
	
Combining \eqref{appliclieb} with \eqref{appliclieb1}  yields
\[\left(\frac{1}{|x|^{\alpha}}*|u_n|^{p}\right)|u_n|^{p-2}u_n\rightharpoonup \left(\frac{1}{|x|^{\alpha}}*|u_0|^{p}\right)|u_0|^{p-2}u_0\;\; \textrm{in}\;\;L^{\frac{2N}{N+2}}(\R^N)\]
as $n\to+\infty$, for all $\frac{2N-\alpha}{N}\leq p\leq2_{\alpha}^*$. Consequently, for $\psi\in C^\infty_c(\mathbb{R}^{N},\mathbb{C})$, it follows that		
\[\mathfrak{Re}\int_{\mathbb{R}^N}\bigg(\frac{1}{|x|^{\alpha}}*|u_n|^p\bigg)|u_n|^{p-2}u_n\bar{\psi}\; \dd x \to \mathfrak{Re} \int_{\mathbb{R}^N}\bigg(\frac{1}{|x|^{\alpha}}*|u_0|^p\bigg)|u_0|^{p-2}u_0\bar{\psi}\; \dd x\]
and
\[\mathfrak{Re}\int_{\mathbb{R}^N}\bigg(\frac{1}{|x|^{\alpha}}*|u_n|^{2_{\alpha}^*}\bigg)|u_n|^{2_{\alpha}^*-2}u_n\bar{\psi}\; \dd x \to \mathfrak{Re} \int_{\mathbb{R}^N}\bigg(\frac{1}{|x|^{\alpha}}*|u_0|^{2_{\alpha}^*}\bigg)|u_0|^{2_{\alpha}^*-2}u_0\bar{\psi}\; \dd x,\]
that is,
\[B'(u_n)\cdot \psi\to B'(u_0)\cdot \psi\qquad\text{and}\qquad D'(u_n)\cdot \psi\to D'(u_0)\cdot \psi.\]
$\hfill\Box$\end{proof}

\begin{lemma}\label{boundedsolut1}
If $(u_n)\subset H^1_{A,V_{\mathcal{P}}}(\mathbb{R}^{N},\mathbb{C})$ is a $(PS)_{\lambda}$ sequence for $J_{A,V_{\mathcal{P}}}$, then $(u_n)$ is bounded. In addition, if $u_n\rightharpoonup u$ weakly in $H^1_{A,V_{\mathcal{P}}}(\mathbb{R}^{N},\mathbb{C})$ as $n\to \infty$, then $u$ is ground state solution for problem \eqref{prop1.1}.
\end{lemma}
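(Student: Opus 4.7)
The plan is to prove the lemma in several distinct stages. For boundedness, I would combine $J_{A,V_{\mathcal{P}}}(u_n)$ with a suitable multiple of $J'_{A,V_{\mathcal{P}}}(u_n)\cdot u_n$ in the standard way. Using \eqref{j'(u)u} and the explicit form of $J_{A,V_{\mathcal{P}}}$, the difference
\[J_{A,V_{\mathcal{P}}}(u_n) - \frac{1}{2p}J'_{A,V_{\mathcal{P}}}(u_n)\cdot u_n = \left(\frac{1}{2}-\frac{1}{2p}\right)\|u_n\|^2_{A,V_{\mathcal{P}}} + \left(\frac{1}{2p}-\frac{1}{2\cdot 2^*_\alpha}\right)D(u_n)\]
has strictly positive coefficients on both nonnegative terms, because $\frac{2N-\alpha}{N}>1$ and $p<2^*_\alpha$. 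Bounding the left-hand side by $c_\lambda + 1 + \varepsilon_n\|u_n\|_{A,V_{\mathcal{P}}}$ for large $n$ (with $\varepsilon_n=\|J'_{A,V_{\mathcal{P}}}(u_n)\|\to 0$) extracts the a priori estimate on $\|u_n\|_{A,V_{\mathcal{P}}}$.

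To see that the weak limit $u$ solves \eqref{prop1.1}, I would take $\psi \in C^\infty_c(\R^N,\C)$. Weak convergence gives $\langle u_n,\psi\rangle_{A,V_{\mathcal{P}}} \to \langle u,\psi\rangle_{A,V_{\mathcal{P}}}$, while Corollary \ref{lemmaconvderiv} supplies $B'(u_n)\cdot\psi \to B'(u)\cdot\psi$ and $D'(u_n)\cdot\psi \to D'(u)\cdot\psi$. Combined with $J'_{A,V_{\mathcal{P}}}(u_n)\cdot\psi \to 0$, this forces $J'_{A,V_{\mathcal{P}}}(u)\cdot\psi = 0$ on test functions, and density of $C^\infty_c$ in $H^1_{A,V_{\mathcal{P}}}$ propagates the identity to all $\psi \in H^1_{A,V_{\mathcal{P}}}$.

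To identify $u$ as a ground state I would appeal to the Nehari characterization $c_\lambda = \inf\{J_{A,V_{\mathcal{P}}}(v) : v\neq 0,\ J'_{A,V_{\mathcal{P}}}(v)\cdot v = 0\}$, a standard identity for the Mountain-Pass level of Ambrosetti-Rabinowitz-type functionals. Assuming $u\neq 0$, membership in the Nehari manifold follows from $J'(u)\cdot u = 0$ and yields $J_{A,V_{\mathcal{P}}}(u)\geq c_\lambda$. The reverse inequality comes from rewriting $J_{A,V_{\mathcal{P}}}(u)$ exactly as in the boundedness identity above and invoking weak lower semicontinuity of the norm together with Fatou's lemma applied to the integrand defining $D$; this yields $J_{A,V_{\mathcal{P}}}(u)\leq \liminf\bigl[J_{A,V_{\mathcal{P}}}(u_n) - \tfrac{1}{2p}J'_{A,V_{\mathcal{P}}}(u_n)\cdot u_n\bigr] = c_\lambda$, so equality holds.

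The main obstacle is excluding the case $u = 0$, which is genuine because of $\Z^N$-translation invariance of the periodic problem and the possibility of concentration driven by the critical nonlinearity. The plan would be to combine the Brezis-Lieb lemma with a Lions-type non-vanishing alternative: if non-vanishing occurs along a drifting sequence $y_n \in \Z^N$, I would replace $u_n$ by $u_n(\cdot - y_n)$, still a PS sequence at the same level by periodicity of $A$ and $V_\mathcal{P}$, whose weak limit is non-trivial; if instead vanishing occurs in $L^s_{\text{loc}}$ for some $s\in(2,2^*)$, then $B(u_n)\to 0$ and the PS identity combined with \eqref{constantesobol} forces $c_\lambda$ to meet the critical threshold $\tfrac{N+2-\alpha}{2(2N-\alpha)}S_{H,L}^{(2N-\alpha)/(N+2-\alpha)}$. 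The genuinely delicate technical task is therefore to bound $c_\lambda$ strictly below this threshold via cut-offs of the extremal $\bar U$ for $S_{H,L}$, and this is precisely the mechanism producing the intervals for $(p, N, \lambda)$ in Theorem \ref{teoprob1}.
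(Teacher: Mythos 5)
Your proposal is correct and follows the same basic route as the paper: boundedness via the standard combination $J_{A,V_{\mathcal{P}}}(u_n)-\frac{1}{2p}J'_{A,V_{\mathcal{P}}}(u_n)\cdot u_n$ (which the paper dismisses as ``standard arguments''), and passage to the limit in $J'_{A,V_{\mathcal{P}}}(u_n)\cdot\psi$ using Corollary \ref{lemmaconvderiv} plus density of $C^\infty_c$. Where you genuinely go beyond the paper's own proof is in the third stage: the paper stops at $J'_{A,V_{\mathcal{P}}}(u)\cdot\psi=0$ and simply declares $u$ a ground state, whereas you supply the missing energy identification, namely $J_{A,V_{\mathcal{P}}}(u)\geq c_\lambda$ from the Nehari characterization $c_\lambda=c^*_\lambda$ of Lemma \ref{equality}, and $J_{A,V_{\mathcal{P}}}(u)=\bigl(\tfrac12-\tfrac{1}{2p}\bigr)\|u\|^2_{A,V_{\mathcal{P}}}+\bigl(\tfrac{1}{2p}-\tfrac{1}{2\cdot2^*_\alpha}\bigr)D(u)\leq c_\lambda$ by weak lower semicontinuity and Fatou. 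That computation is sound (both coefficients are positive precisely because $\tfrac{2N-\alpha}{N}<p<2^*_\alpha$), and it is needed to make the word ``ground state'' in the statement honest. You are also right to flag that nothing in this lemma excludes $u=0$; the paper does not address this here either, deferring the translation/concentration-compactness dichotomy and the strict bound $c_\lambda<\tfrac{N+2-\alpha}{2(2N-\alpha)}S_A^{\frac{2N-\alpha}{N+2-\alpha}}$ to Lemma \ref{conseqlions}, Lemma \ref{mainlemma} and the proof of Theorem \ref{teoprop1}, exactly as you describe. In short: same skeleton, but your version repairs two gaps the paper leaves open in this particular proof.
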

\begin{proof}
Standard arguments prove that $(u_n)$ is bounded in $H^1_{A,V_{\mathcal{P}}}(\mathbb{R}^{N},\mathbb{C})$. Then, up to a subsequence, we have $u_n\rightharpoonup u$ weakly in $H^1_{A,V_{\mathcal{P}}}(\mathbb{R}^{N},\mathbb{C})$ as $n\to \infty$. 	

From Corollary \ref{lemmaconvderiv} it follows that, for all $\psi \in H^1_{A,V_{\mathcal{P}}}(\mathbb{R}^{N},\mathbb{C})$, we have
\[\mathfrak{Re}\int_{\mathbb{R}^N}\bigg(\frac{1}{|x|^{\alpha}}*|u_n|^p\bigg)|u|^{p-2}u_n\bar{\psi}\; \dd x = \mathfrak{Re} \int_{\mathbb{R}^N}\bigg(\frac{1}{|x|^{\alpha}}*|u|^p\bigg)|u|^{p-2}u\bar{\psi}\; \dd x+o_n(1),\ \textrm{ as }\ n\to\infty,
\]
where $s=p$ or $s=2_{\alpha}^*$.

Thus, since for all $\psi\in C^\infty_c(\mathbb{R}^{N},\mathbb{C})$ we have $J'_{A,V_{\mathcal{P}}}(u_n)\cdot \psi= o_n (1)$, 
we obtain
\[J'_{A,V}(u)\cdot \psi=0,\;\;\forall\;\psi \in H^1_{A,V_{\mathcal{P}}}(\mathbb{R}^{N},\mathbb{C}),
\]
that is, $u$ is a ground state solution for \eqref{prop1.1}.
$\hfill\Box$\end{proof}\vspace*{.2cm}

We now consider the Nehari manifold associated with the
$J_{A,V_{\mathcal{P}}}$.
\begin{align*}\mathcal{M}_{A,V_{\mathcal{P}}}
&=\left\{u\in H^1_{A,V_{\mathcal{P}}}(\mathbb{R}^{N},\mathbb{C})\setminus\{0\}\,:\,\Vert u\Vert_{A,V_{\mathcal{P}}}^{2}=D(u)+\lambda B(u)\right\}.
\end{align*}
\begin{lemma}\label{equality}
There exists a unique $t_u=t_u (u)>0$  such that $t_u u\in \mathcal{M}_{A,V_{\mathcal{P}}} $ for all  $u\in H^1_{A,V_{\mathcal{P}}}(\mathbb{R}^{N},\mathbb{C})\setminus\{0\}$  and  $J_{A,V_{\mathcal{P}}}(t_u u)=\displaystyle \max_{t\geq 0} J_{A,V_{\mathcal{P}}}(tu)$. Moreover $c_{\lambda}=c^*_{\lambda}=c^{**}_{\lambda}$, where \[c_{\lambda}^{*}= \displaystyle  \inf_{u \;\in\; \mathcal{M}_{A,V}} J_{A,V_{\mathcal{P}}}(u)\quad \textrm{and}\quad c_{\lambda}^{**}=\displaystyle \inf_{u\;\in\; H^1_{A,V_{\mathcal{P}}}(\R^N,\C)\setminus \{0\}}\max_{t\geq 0} J_{A,V_{\mathcal{P}}}(tu).\]
\end{lemma}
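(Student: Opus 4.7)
The plan is to analyze the fibering map $g_{u}(t) := J_{A,V_{\mathcal{P}}}(tu)$ already computed in the proof of Lemma \ref{gpm}, use its strict unimodality to identify $\mathcal{M}_{A,V_{\mathcal{P}}}$ with the projections of nonzero elements onto their unique maxima, and then pass between the three minimax values by means of an intermediate value argument along paths in $\Gamma$.

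First I would revisit
\[
g_{u}(t) = \tfrac{t^{2}}{2}\,\|u\|_{A,V_{\mathcal{P}}}^{2} - \tfrac{t^{2\cdot 2_{\alpha}^{*}}}{2\cdot 2_{\alpha}^{*}}\,D(u) - \tfrac{\lambda\,t^{2p}}{2p}\,B(u),
\]
whose derivative is $g_{u}'(t) = t\bigl(\|u\|_{A,V_{\mathcal{P}}}^{2} - t^{2\cdot 2_{\alpha}^{*}-2} D(u) - \lambda\, t^{2p-2} B(u)\bigr)$. Since $p > (2N-\alpha)/N > 1$ and $2_{\alpha}^{*} > 1$, the exponents $2\cdot 2_{\alpha}^{*}-2$ and $2p-2$ are strictly positive, so the bracket is strictly decreasing in $t\in(0,\infty)$, starts positive, and tends to $-\infty$. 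A unique $t_{u}>0$ therefore solves $g_{u}'(t_{u})=0$; it is the global maximizer of $g_{u}$, and the equation is exactly $\|t_{u}u\|_{A,V_{\mathcal{P}}}^{2}=D(t_{u}u)+\lambda B(t_{u}u)$, so $t_{u}u\in\mathcal{M}_{A,V_{\mathcal{P}}}$. Uniqueness also forces $t_{v}=1$ for every $v\in\mathcal{M}_{A,V_{\mathcal{P}}}$.

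The equality $c^{*}_{\lambda}=c^{**}_{\lambda}$ is then immediate: for every nonzero $u$, $\max_{t\geq 0}J_{A,V_{\mathcal{P}}}(tu)=J_{A,V_{\mathcal{P}}}(t_{u}u)\geq c^{*}_{\lambda}$, while for $v\in\mathcal{M}_{A,V_{\mathcal{P}}}$ one has $J_{A,V_{\mathcal{P}}}(v)=\max_{t\geq 0}J_{A,V_{\mathcal{P}}}(tv)\geq c^{**}_{\lambda}$. For $c_{\lambda}\leq c^{**}_{\lambda}$ I would use, for each $u\neq 0$, the admissible path $\gamma(s):=sTu$ with $T>t_{u}$ taken large enough that $J_{A,V_{\mathcal{P}}}(Tu)<0$ (available by Lemma \ref{gpm}(ii)); this path lies in $\Gamma$ and $\max_{s\in[0,1]}J_{A,V_{\mathcal{P}}}(\gamma(s))=J_{A,V_{\mathcal{P}}}(t_{u}u)$, so taking the infimum over $u$ yields $c_{\lambda}\leq c^{**}_{\lambda}$.

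The main obstacle is the reverse inequality $c_{\lambda}\geq c^{*}_{\lambda}$, for which I would show that every $\gamma\in\Gamma$ crosses $\mathcal{M}_{A,V_{\mathcal{P}}}$. Introduce the continuous function $h(s):=\|\gamma(s)\|_{A,V_{\mathcal{P}}}^{2}-D(\gamma(s))-\lambda B(\gamma(s))$ on $[0,1]$. From \eqref{desigb}, \eqref{desigd} and the immersion \eqref{immersion},
\[
h(s)\geq \|\gamma(s)\|_{A,V_{\mathcal{P}}}^{2}\bigl(1 - C_{2}\|\gamma(s)\|_{A,V_{\mathcal{P}}}^{2\cdot 2_{\alpha}^{*}-2} - \lambda C_{1}\|\gamma(s)\|_{A,V_{\mathcal{P}}}^{2p-2}\bigr),
\]
so $h$ is strictly positive whenever $\gamma(s)$ is small and nonzero. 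At $s=1$, combining $J_{A,V_{\mathcal{P}}}(\gamma(1))<0$ with $2_{\alpha}^{*},p>1$ gives
\[
\|\gamma(1)\|_{A,V_{\mathcal{P}}}^{2} < \tfrac{1}{2_{\alpha}^{*}}D(\gamma(1)) + \tfrac{\lambda}{p}B(\gamma(1)) < D(\gamma(1))+\lambda B(\gamma(1)),
\]
i.e.\ $h(1)<0$. Taking $s^{*}$ to be the largest zero of $h$ in $(0,1)$, continuity forces $h<0$ on $(s^{*},1]$; this rules out $\gamma(s^{*})=0$ (otherwise $h$ would be strictly positive immediately after $s^{*}$), so $\gamma(s^{*})\in\mathcal{M}_{A,V_{\mathcal{P}}}$ and $\max_{s}J_{A,V_{\mathcal{P}}}(\gamma(s))\geq J_{A,V_{\mathcal{P}}}(\gamma(s^{*}))\geq c^{*}_{\lambda}$. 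Passing to the infimum over $\Gamma$ closes the chain $c_{\lambda}\geq c^{*}_{\lambda}=c^{**}_{\lambda}\geq c_{\lambda}$.
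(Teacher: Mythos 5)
Your proposal is correct and follows essentially the same route as the paper: the existence and uniqueness of $t_u$ come from the strict monotonicity of $t\mapsto t^{2(2_{\alpha}^*-1)}D(u)+\lambda t^{2(p-1)}B(u)$ together with the mountain pass geometry, exactly as in the paper's argument. The only difference is that for the identities $c_\lambda=c^*_\lambda=c^{**}_\lambda$ the paper simply refers to the standard references, whereas you write out in full the usual path-crossing argument (every $\gamma\in\Gamma$ intersects $\mathcal{M}_{A,V_{\mathcal{P}}}$), which is precisely the argument those references contain and is carried out correctly here.
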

\begin{proof} Let $u\in H^1_{A,V_{\mathcal{P}}}(\mathbb{R}^{N},\mathbb{C})\setminus\{0\}$ and $g_u$ defined on $(0,+\infty)$ given by \[g_u(t)=J_{A,V_{\mathcal{P}}}(tu).\] By  the mountain pass geometry (Lemma \ref{gpm}), there exists $t_u>0$ such that
\[g_u(t_u)=\displaystyle \max_{t\geq 0} g_u(t)= \displaystyle \max_{t\geq 0} J_{A,V_{\mathcal{P}}}(t_u u).\]

Hence
\[0=g'_u(t_u)=J'_{A,V_{\mathcal{P}}}(t_u u)\cdot u= J'_{A,V_{\mathcal{P}}}(t_u u)\cdot t_u u,\]
implying that $t_u u \in \mathcal{M}_{A,V_{\mathcal{P}}}$, as consequence of \eqref{j'(u)u}. We now show that $t_u$ is unique. To this end, we suppose that there exists $s_u>0$ such that $s_u u \in \mathcal{M}_{A,V_{\mathcal{P}}}$. Thus, we have both
\begin{equation*}
\|u\|_{A,V_{\mathcal{P}}}^2=t_u^{2(2_{\alpha}^*-1)}D(u)+\lambda t_u^{2(p-1)}B(u)\qquad\text{and}\qquad \|u\|_{A,V_{\mathcal{P}}}^2=s_u^{2(2_{\alpha}^*-1)}D(u)+\lambda s_u^{2(p-1)}B(u).
\end{equation*}

Hence
\[0=\left(t_u^{2^(2_{\alpha}^*-1)}-s_u^{2(2_{\alpha}^*-1)}\right)D(u)+\lambda\left(t_u^{2(p-1)}-s_u^{2(p-1)}\right) B(u).\]

Since both terms in parentheses have the same sign if $t_u\neq s_u$ and we also have $B(u)>0$, $D(u)>0$ and $\lambda>0$, it follows that $t_u=s_u$.
		
Now, the rest of the proof follows arguments similar to that found in \cite{Claudionor, Felmer, Rabinowitz,Willem}.
$\hfill\Box$\end{proof}
	
The following result controls the level ${c_{\lambda}}$ of a Palais-Smale sequence of $J_{A,V_{\mathcal{P}}}$.
		
\begin{lemma}\label{conseqlions}
Let $(u_n)\subset H^1_{A,V_{\mathcal{P}}}(\R^N,\C)$  a $(PS)_{c_{\lambda}}$ sequence for $J_{A,V_{\mathcal{P}}}$ such that
\begin{equation*}
u_n \rightharpoonup 0\quad \textrm{weakly in}\; H^1_{A,V_{\mathcal{P}}}(\R^N,\C), \;\textrm{ as}\; n\to\infty,
\end{equation*}
with
\begin{equation*} c_{\lambda}<\frac{N+2-\alpha}{2(2N- \alpha)}S_{A}^{\frac{2N-\alpha}{N-\alpha +2}}.
\end{equation*}
Then the sequence $(u_n)$ verifies either
\begin{enumerate}
	\item [$(i)$] $u_n\to 0$ strongly in $H^1_{A,V_{\mathcal{P}}}(\R^N,\C),$ as $n\to\infty,$
\end{enumerate}
or
\begin{enumerate}
	\item [$(ii)$] There exists a sequence $(y_n)\subset \R^N$ and constants $r,\theta>0$ such that
	\[\limsup_{n\to\infty}\int_{B_r(y_n)}|u_n|^2 \; \dd x\geq \theta\]
\end{enumerate}
where $B_r(y)$ denotes the ball in $\R^N$ of center at $y$ and  radius $r>0$.		
	\end{lemma}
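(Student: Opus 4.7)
The plan is a standard Lions-type dichotomy argument, adapted to the nonlocal magnetic setting. Assume that alternative $(ii)$ fails, i.e.\ for every $r>0$
\[\lim_{n\to\infty}\sup_{y\in\R^N}\int_{B_r(y)}|u_n|^2\,\dd x=0.\]
Since $(u_n)$ is bounded in $H^1_{A,V_{\mathcal{P}}}(\R^N,\C)$ (standard, using the coercivity from $(V_1)$), the diamagnetic inequality transfers the vanishing to $|u_n|\in H^1(\R^N)$, and Lions' vanishing lemma gives $|u_n|\to 0$ in $L^s(\R^N)$ for every $s\in(2,2^*)$, hence $u_n\to 0$ in $L^s(\R^N,\C)$ for such $s$. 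I want to deduce strong convergence in $H^1_{A,V_{\mathcal{P}}}$.

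First I would kill the subcritical nonlocal term. By \eqref{desigb} we have
\[|B(u_n)|\leq C_1(N,\alpha)\|u_n\|_{\frac{2Np}{2N-\alpha}}^{2p},\]
and the hypothesis $\frac{2N-\alpha}{N}<p<2_\alpha^*$ is exactly equivalent to $\frac{2Np}{2N-\alpha}\in(2,2^*)$, so $B(u_n)\to 0$.

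Next, I would form the standard combination
\[J_{A,V_{\mathcal{P}}}(u_n)-\tfrac{1}{2}J'_{A,V_{\mathcal{P}}}(u_n)\cdot u_n=\frac{N+2-\alpha}{2(2N-\alpha)}\,D(u_n)+\frac{\lambda(p-1)}{2p}\,B(u_n),\]
which follows directly from the identity $\tfrac{1}{2}-\tfrac{1}{2\cdot 2_\alpha^*}=\tfrac{N+2-\alpha}{2(2N-\alpha)}$. Since $(u_n)$ is a $(PS)_{c_\lambda}$ sequence and is bounded, the left-hand side converges to $c_\lambda$, while $B(u_n)\to 0$; therefore
\[D(u_n)\longrightarrow L:=\frac{2(2N-\alpha)}{N+2-\alpha}\,c_\lambda.\]
Using $J'_{A,V_{\mathcal{P}}}(u_n)\cdot u_n=\|u_n\|_{A,V_{\mathcal{P}}}^2-D(u_n)-\lambda B(u_n)\to 0$, I also obtain $\|u_n\|_{A,V_{\mathcal{P}}}^2\to L$, hence in particular $\|\nabla_A u_n\|_2^2\leq\|u_n\|_{A,V_{\mathcal{P}}}^2\to L$.

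Now I invoke the definition \eqref{dunshl} of $S_A$: $\|\nabla_A u_n\|_2^2\geq S_A\,D(u_n)^{\frac{N-2}{2N-\alpha}}$. Passing to the limit yields $L\geq S_A\,L^{\frac{N-2}{2N-\alpha}}$. If $L>0$, then $L\geq S_A^{\frac{2N-\alpha}{N+2-\alpha}}$, which in view of $c_\lambda=\frac{N+2-\alpha}{2(2N-\alpha)}L$ contradicts the assumed strict bound
\[c_\lambda<\frac{N+2-\alpha}{2(2N-\alpha)}S_A^{\frac{2N-\alpha}{N+2-\alpha}}.\]
Hence $L=0$, so $\|u_n\|_{A,V_{\mathcal{P}}}\to 0$ and conclusion $(i)$ holds. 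The main technical obstacle is the transfer of the vanishing property from $L^2$-concentration to the $L^{2Np/(2N-\alpha)}$ norm controlling $B(u_n)$; this is where both the diamagnetic inequality and the precise choice of exponent range $p\in(\tfrac{2N-\alpha}{N},2_\alpha^*)$ play a crucial role. The rest is bookkeeping with the Pohozaev-type identity relating $J$, $J'\cdot u$, and $S_A$.
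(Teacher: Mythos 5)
Your proposal is correct and follows essentially the same route as the paper: assume vanishing, use Lions' lemma (via the diamagnetic inequality) to kill $B(u_n)$, extract the common limit $\ell$ of $\|u_n\|_{A,V_{\mathcal{P}}}^2$ and $D(u_n)$ from the Palais--Smale relations, and rule out $\ell>0$ by comparing $c_\lambda$ with $\frac{N+2-\alpha}{2(2N-\alpha)}S_A^{\frac{2N-\alpha}{N+2-\alpha}}$ through the definition of $S_A$. Your bookkeeping is in fact slightly more careful than the paper's, since you identify the correct Lebesgue exponent $\frac{2Np}{2N-\alpha}\in(2,2^*)$ in the Hardy--Littlewood--Sobolev bound for $B$ and obtain the convergence of $D(u_n)$ without passing to a subsequence.
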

\begin{proof}
Suppose that ($ii$) does not hold. Applying a result by Lions \cite[Lemma 1.21]{Willem}, it follows from  inequality \eqref{desigb} that
\begin{equation}\label{converb}
B(u_n)\to 0,\quad \textrm{as}\quad n\to \infty.
\end{equation}

Since $J'_{A,V_{\mathcal{P}}}(u_n)u_n=o_n(1)$ as $n\to \infty$, we obtain
\begin{equation}\label{on1}
\|u_n\|^2_{A,V_{\mathcal{P}}}=D(u_n)+o_n (1)\;\;\textrm{as}\;\; n\to \infty.
\end{equation}

Let us suppose that 	
\begin{equation*}
\|u_n\|^2_{A,V_{\mathcal{P}}}\to \ell\ \ (\ell>0)\quad \textrm{as}\quad n\to \infty.
\end{equation*}
Thus, as consequence of \eqref{on1}, we have
\begin{equation*}
D(u_n)\to \ell,\quad\textrm{as}\quad n\to \infty.
\end{equation*}
	
Since
\[J_{A,V}(u_n)=\frac{1}{2}\Vert u\Vert^2_{A,V}-\frac{\lambda}{2p}B(u_n)-\frac{1}{2\cdot 2_{\alpha}^*}D(u_n),\]
making $n\to \infty$ yields 
\begin{align}\label{clambda}
c_{\lambda}=\frac{\ell}{2}\left(1-\frac{1}{ 2_{\alpha}^*}\right)=\ell\left(\frac{N+2-\alpha}{ 2(2N-\alpha)}\right).
\end{align}

On the other hand, it follows from \eqref{dunshl} that
\begin{equation*}
\|u_n\|^2_{A,V_{\mathcal{P}}}\geq \displaystyle \int_{\R^N} |\nabla_A u_n|^2\; \dd x\geq S_{A}(D(u_n))^{\frac{N-2}{2N-\alpha}}, \quad \forall\; u\in D^{1,2}_A(\R^N).
\end{equation*}
Thus, 
\begin{equation}\label{clambda1}
\ell\geq (S_{A})^{\frac{2N-\alpha}{N+2-\alpha}}
\end{equation}
and from  \eqref{clambda} and \eqref{clambda1} we conclude that $c_{\lambda}\geq \frac{N+2-\alpha}{2(2N- \alpha)}S_{A}^{\frac{2N-\alpha}{N+2-\alpha}}$, which is a contradiction.
	Therefore, ($i$) is valid and the proof is complete.
\end{proof}

We now state our result about the periodic problem \eqref{prop1.1}.
\begin{thm}\label{teoprop1}
Under the hypotheses already stated on $A$ and $\alpha$, suppose that $(V_1)$ is valid. Then problem \eqref{prop1.1} has at least one ground state solution if either
\begin{enumerate}
\item [$(i)$] $\frac{N+2-\alpha}{N-2}<p<2_{\alpha}^*$, $N=3,4$ and $\lambda>0$;
\item [$(ii)$] $\frac{2N-\alpha}{N}<p\leq \frac{N+2-\alpha}{N-2}$,  $N=3,4$ and $\lambda$ sufficiently large;
\item [$(iii)$] $\frac{2N-\alpha-2}{N-2}<p<2_{\alpha}^*$, $N\geq 5$ and $\lambda>0$;
\item [$(iv)$] $\frac{2N-\alpha}{N}<p\leq \frac{2N-\alpha-2}{N-2}$,  $N\geq 5$ and $\lambda$ sufficiently large.
\end{enumerate}
\end{thm}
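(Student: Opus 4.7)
The plan is to find a nontrivial weak limit of a Palais--Smale sequence at the mountain pass level $c_{\lambda}$, by first placing $c_{\lambda}$ strictly below the compactness threshold
\[c^{*}:=\frac{N+2-\alpha}{2(2N-\alpha)}\,S_{A}^{(2N-\alpha)/(N+2-\alpha)}\]
of Lemma~\ref{conseqlions} and then exploiting the $\Z^{N}$-periodicity of $A$ and $V_{\mathcal{P}}$ to rule out vanishing. Lemma~\ref{gpm} together with the mountain pass theorem (without the $(PS)$ condition) produces $(u_n)\subset H^1_{A,V_{\mathcal{P}}}(\R^N,\C)$ with $J_{A,V_{\mathcal{P}}}(u_n)\to c_{\lambda}$ and $J'_{A,V_{\mathcal{P}}}(u_n)\to 0$, and by Lemma~\ref{boundedsolut1} this sequence is bounded, so up to a subsequence $u_n\rightharpoonup u$ with $u$ a critical point of $J_{A,V_{\mathcal{P}}}$. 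The task therefore reduces to producing a \emph{nontrivial} critical point.

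The crux is the bound $c_{\lambda}<c^{*}$, which by Lemma~\ref{equality} follows as soon as one exhibits $\psi\in H^1_{A,V_{\mathcal{P}}}\setminus\{0\}$ with $\max_{t\geq 0}J_{A,V_{\mathcal{P}}}(t\psi)<c^{*}$. Following Brézis--Nirenberg \cite{BrezisNiremberg} and Gao--Yang \cite{BrezisN1}, fix a point $x_0\in\R^N$, a smooth cutoff $\eta$ supported in a small ball around $x_0$, and set
\[\psi_\epsilon(x)=e^{-iA(x_0)\cdot(x-x_0)}\,\eta(x-x_0)\,\bar U_\epsilon(x-x_0),\qquad \bar U_\epsilon(x)=\epsilon^{-(N-2)/2}\bar U(x/\epsilon),\]
with $\bar U$ the extremal of $S_{H,L}$ from Proposition~\ref{propsobol}. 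The constant gauge factor reduces $|\nabla_{A}\psi_\epsilon|^2$ to $|\nabla(\eta\bar U_\epsilon)|^2$ modulo a remainder controlled by $|A(x)-A(x_0)|^2=O(|x-x_0|^2)$, so that standard cutoff asymptotics yield
\begin{align*}
\|\nabla_{A}\psi_\epsilon\|_2^{2} &= S_{H,L}^{(2N-\alpha)/(N+2-\alpha)}\,C(N,\alpha)^{(N-2)/(2N-\alpha)}+O(\epsilon^{N-2}),\\
D(\psi_\epsilon) &= S_{H,L}^{(2N-\alpha)/(N+2-\alpha)}+O(\epsilon^{N}),\\
\int_{\R^N}V_{\mathcal{P}}|\psi_\epsilon|^{2}\,\dd x &= O\bigl(\omega_N(\epsilon)\bigr),
\end{align*}
where $\omega_3(\epsilon)=\epsilon$, $\omega_4(\epsilon)=\epsilon^2|\log\epsilon|$ and $\omega_N(\epsilon)=\epsilon^2$ for $N\geq 5$, whereas the Choquard correction obeys $B(\psi_\epsilon)\asymp \epsilon^{\,2N-\alpha-p(N-2)}$ (with a logarithmic enhancement at the borderline exponent). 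Optimising $t\mapsto J_{A,V_{\mathcal{P}}}(t\psi_\epsilon)$ gives an upper bound of the form
\[c_{\lambda}\leq\max_{t\geq 0}J_{A,V_{\mathcal{P}}}(t\psi_\epsilon)\leq c^{*}+C\,\omega_N(\epsilon)-K\lambda\,\epsilon^{\,2N-\alpha-p(N-2)},\]
for positive constants $C,K$; the four regimes correspond exactly to the algebraic conditions under which the negative $\lambda$-term dominates the $V_{\mathcal{P}}$-remainder as $\epsilon\to 0^{+}$: in $(i)$ and $(iii)$ the scaling exponent $2N-\alpha-p(N-2)$ is strictly smaller than the one governing $\omega_N$, so any $\lambda>0$ works; in $(ii)$ and $(iv)$ the two exponents coincide (possibly up to a logarithm) and one needs $\lambda$ sufficiently large. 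Carrying out these asymptotic comparisons cleanly is the main technical obstacle.

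With $c_{\lambda}<c^{*}$ established, we extract a nontrivial critical point: if $u\ne 0$ we are done, and otherwise Lemma~\ref{conseqlions} excludes strong convergence (since $c_{\lambda}>0$) and forces the non-vanishing alternative, so there exist $(y_n)\subset\R^N$ and $r,\theta>0$ with $\liminf_n\int_{B_r(y_n)}|u_n|^2\,\dd x\geq\theta$. Writing $y_n=z_n+w_n$ with $z_n\in\Z^N$ and $(w_n)$ bounded, the $\Z^N$-periodicity of $A$ and $V_{\mathcal{P}}$ makes $v_n(x):=u_n(x+z_n)$ another $(PS)_{c_{\lambda}}$ sequence with the same norm, whose weak limit $v\ne 0$ solves \eqref{prop1.1} by Corollary~\ref{lemmaconvderiv}. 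In either case, denoting the nontrivial critical point again by $v$, the identity
\[J_{A,V_{\mathcal{P}}}(v_n)-\tfrac{1}{2} J'_{A,V_{\mathcal{P}}}(v_n)\cdot v_n=\left(\tfrac{1}{2}-\tfrac{1}{2\cdot 2_{\alpha}^{*}}\right)D(v_n)+\lambda\left(\tfrac{1}{2}-\tfrac{1}{2p}\right)B(v_n)\]
displays a sum of nonnegative quantities, and combining the Brézis--Lieb splitting, Fatou's lemma and Lemma~\ref{equality} yields $c_{\lambda}^{*}\leq J_{A,V_{\mathcal{P}}}(v)\leq\liminf_n J_{A,V_{\mathcal{P}}}(v_n)=c_{\lambda}=c_{\lambda}^{*}$, identifying $v$ as the desired ground state solution of \eqref{prop1.1}.
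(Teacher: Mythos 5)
Your overall strategy coincides with the paper's: mountain pass geometry, a bounded $(PS)_{c_\lambda}$ sequence, the energy estimate $c_\lambda<c^{*}$ via truncated instantons, and $\Z^N$-translations exploiting periodicity to defeat vanishing; your closing Fatou/Nehari argument identifying the translated weak limit as a ground state is also the standard one (and is in fact spelled out more carefully than in the paper). For cases $(i)$ and $(iii)$ your asymptotics reproduce the paper's Case 1; the paper even dispenses with your gauge factor by taking a real-valued test function $u_\varepsilon=\psi U_\varepsilon$, for which $|\nabla_A u_\varepsilon|^2=|\nabla u_\varepsilon|^2+|A|^2|u_\varepsilon|^2$ pointwise, so the magnetic contribution is simply absorbed into the potential term.

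The gap is in cases $(ii)$ and $(iv)$. Your assertion that there "the two exponents coincide (possibly up to a logarithm)" holds only at the right endpoints $p=\frac{N+2-\alpha}{N-2}$ (resp. $p=\frac{2N-2-\alpha}{N-2}$): for $p$ strictly inside those ranges the exponent $2N-\alpha-(N-2)p$ strictly exceeds the one governing $\omega_N(\epsilon)$, so the negative term in your bound is $o\bigl(\omega_N(\epsilon)\bigr)$ and the inequality $c_\lambda\le c^{*}+C\,\omega_N(\epsilon)-K\lambda\,\epsilon^{2N-\alpha-(N-2)p}$ yields nothing as $\epsilon\to 0$ for fixed $\lambda$. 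Nor can you fix $\epsilon$ and send $\lambda\to\infty$ inside that formula, because the constant $K$ hides $t_\lambda^{2p}$ and the maximizer $t_\lambda$ of $t\mapsto J_{A,V_{\mathcal P}}(tu_\epsilon)$ tends to $0$: from the critical point identity $\|u_\epsilon\|^2_{A,V_{\mathcal P}}=\lambda t_\lambda^{2(p-1)}B(u_\epsilon)+t_\lambda^{2(2_\alpha^*-1)}D(u_\epsilon)$ one sees that $\lambda t_\lambda^{2(p-1)}$ stays bounded, hence $\lambda t_\lambda^{2p}B(u_\epsilon)\to 0$ and the "$\lambda$-term" never dominates. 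The paper's Case 2 uses a different and simpler mechanism: since $t_\lambda\to 0$ as $\lambda\to\infty$, one discards the negative terms altogether and bounds $\max_{t\ge 0}J_{A,V_{\mathcal P}}(tu_\epsilon)<\frac{t_\lambda^2}{2}\|u_\epsilon\|^2_{A,V_{\mathcal P}}\to 0<c^{*}$. You need this (or an equivalent) argument to cover $(ii)$ and $(iv)$; the rest of your outline then goes through as in the paper.
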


\begin{proof} Let $c_\lambda$ be the mountain pass level and consider a sequence $(u_n)\subset H^1_{A,V_{\mathcal{P}}}(\mathbb{R}^{N},\mathbb{C})$ such that
\[J'_{A,V_{\mathcal{P}}}(u_n)\to 0\qquad\textrm{and}\qquad J_{A,V_{\mathcal{P}}}(u_n)\to c_{\lambda}.\]

\emph{Claim.} We affirm that $c_{\lambda}< \frac{N+2-\alpha}{2(2N-\alpha)}(S_{A})^{\frac{2N-\alpha}{N+2-\alpha}}$, a result that will be shown after completing our proof, since it is very technical.
	
Lemma \ref{boundedsolut1} guarantees that $(u_n)$ is bounded. So, passing to a subsequence if necessary, there is $u\in H^1_{A,V_{\mathcal{P}}}(\mathbb{R}^{N},\mathbb{C})$ such that
\[u_n\rightharpoonup u\ \ \textrm{in}\ \ H^1_{A,V_{\mathcal{P}}}(\R^N,\C), \qquad u_n\to u\ \ \textrm{in}\ \ L^2_{loc}(\R^N,\C)\qquad\text{and}\qquad u_n\rightarrow  u\ \ \textrm{a.e.}\ \ x\;\in \R^N.\]
	
If $u\neq 0$ we are done. If $u=0$, it follows from Lemma \ref{conseqlions} the existence of $\theta>0$ and $(y_n)\subset \R^N$ such that
\begin{equation}\label{conseqlim}
\limsup_{n\to\infty}\int_{B_r(y_n)}|u_n|^2 \; \dd x\geq \theta.
\end{equation}

A direct computation shows that we can assume that $(y_n)\subset \Z^N$. Let
\[v_n(x):=u_n (x+y_n).\]
	
Since both $V_{\mathcal{P}}$ and $A$ are $\Z^N$-periodic, we have
\[\|v_n\|_{A,V_{\mathcal{P}}}=\|u_n\|_{A,V_{\mathcal{P}}}\quad J_{A,V_{\mathcal{P}}}(v_n)=  J_{A,V_{\mathcal{P}}}(u_n)\quad\textrm{and}\quad J'_{A,V_{\mathcal{P}}}(v_n)\to 0,\ \ \textrm{as}\ \ n\to\infty.\]
Therefore there exists  $v\in H^1_{A,V_{\mathcal{P}}}$ such that $v_n \rightharpoonup v$ weakly in $H^1_{A,V_{\mathcal{P}}}(\R^N,\C)$ and $v_n\to v$ in $L^2_{loc}(\R^N,\C)$.
	
We claim that $v\neq 0$. In fact, it follows from \eqref{conseqlim}
\begin{align*}
0<\theta\leq \|v_n\|_{L^2 (B_r(0))}\leq \|v_n-v\|_{L^2 (B_r(0))}+\|v\|_{L^2 (B_r(0))}.
\end{align*}
Since $v_n\rightarrow  v$ in $L^2_{loc}(\R^N)$, we have $\|v_n-v\|_{L^2 (B_r(0))}\to 0$ as $n\to \infty$, proving our claim.
	
But Corollary \ref{lemmaconvderiv} guarantees that $J'_{A,V_{\mathcal{P}}}(v_n)\cdot \psi\to J'_{A,V_{\mathcal{P}}}(v_n)\cdot \psi$ and it follows that $J'_{A,V_{\mathcal{P}}}(v)\cdot \psi=0$. Consequently, $v$ is a ground state solution of problem \eqref{prop1.1}.
$\hfill\Box$\end{proof}\vspace*{.2cm}

We now prove the postponed Claim, that is, we show that $c_{\lambda}< \frac{N+2-\alpha}{2(2N-\alpha)}(S_{A})^{\frac{2N-\alpha}{N+2-\alpha}}$. Observe that, once proved the existence of $u_\epsilon$ as in our next result, then
\[0<c_{\lambda}=\inf_{\alpha\in \Gamma}\max_{t\in [0,1]}J_{A,V_{\mathcal{P}}}(\gamma(t))\leq\sup_{t\geq 0}J_{A,V_{\mathcal{P}}}(tu_{\varepsilon})<\frac{N+2-\alpha}{2(2N-\alpha)}(S_{A})^{\frac{2N-\alpha}{N+2-\alpha}}.\]
\begin{lemma}\label{mainlemma}
There exists $u_{\varepsilon}$ such that
\begin{equation}
\sup_{t\geq 0}J_{A,V_{\mathcal{P}}}(tu_{\varepsilon})<\frac{N+2-\alpha}{2(2N-\alpha)}(S_{A})^{\frac{2N-\alpha}{N+2-\alpha}}.
\end{equation}
provided that either
\begin{enumerate}
\item [$(i)$] $\frac{N+2-\alpha}{N-2}<p<2_{\alpha}^*$, $N=3,4$ and $\lambda>0$;
\item [$(ii)$] $\frac{2N-\alpha}{N}<p\leq \frac{N+2-\alpha}{N-2}$,  $N=3,4$ and $\lambda$ sufficiently large;
\item [$(iii)$] $\frac{2N-2-\alpha}{N-2}<p<2_{\alpha}^*$, $N\geq 5$ and $\lambda>0$;
\item [$(iv)$] $\frac{2N-\alpha}{N}<p\leq \frac{2N-2-\alpha}{N-2}$,  $N\geq 5$ and $\lambda$ sufficiently large.
\end{enumerate}
\end{lemma}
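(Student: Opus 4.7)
The plan is to adapt the Brézis–Nirenberg cut-off strategy to the magnetic nonlocal setting. After translation and a gauge change $u\mapsto e^{-i\varphi}u$ with $\nabla\varphi(0)=A(0)$, we may assume $A(0)=0$. Fix $\eta\in C^\infty_c(\mathbb{R}^N,[0,1])$ with $\eta\equiv 1$ on $B_{r/2}(0)$ and $\operatorname{supp}\eta\subset B_r(0)$, and set
\[u_\varepsilon(x):=\eta(x)\,\bar U_\varepsilon(x),\qquad \bar U_\varepsilon(x):=\varepsilon^{-(N-2)/2}\bar U(x/\varepsilon),\]
where $\bar U$ is the extremal function of $S_{H,L}$ exhibited before Proposition \ref{hls}.

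The first step is to establish sharp asymptotics as $\varepsilon\to 0^+$: $\|u_\varepsilon\|^2_{A,V_{\mathcal{P}}}$ and $D(u_\varepsilon)$ converge to $S_{H,L}^{(2N-\alpha)/(N+2-\alpha)}$ with the standard Brézis–Nirenberg errors $O(\varepsilon^{N-2})$ and $O(\varepsilon^{N})$ respectively, while the potential correction satisfies
\[\mathcal{V}(\varepsilon):=\int_{\mathbb{R}^N}V_{\mathcal{P}}(x)|u_\varepsilon|^2\,\dd x=\begin{cases}O(\varepsilon^2), & N\geq 5,\\ O(\varepsilon^2|\log\varepsilon|), & N=4,\\ O(\varepsilon), & N=3.\end{cases}\]
The magnetic piece is absorbed into $\mathcal{V}(\varepsilon)$ because $A(0)=0$ and $A\in C^1$ give $|A(x)|\lesssim |x|$ on $\operatorname{supp}\eta$. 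For the lower-order term one rescales to
\[B(u_\varepsilon)=\varepsilon^{2N-\alpha-p(N-2)}\!\!\int_{\mathbb{R}^N}\!\!\int_{\mathbb{R}^N}\!\!\frac{|\eta(\varepsilon x)\bar U(x)|^p|\eta(\varepsilon y)\bar U(y)|^p}{|x-y|^\alpha}\,\dd x\,\dd y,\]
and uses $\bar U(x)\sim |x|^{-(N-2)}$ at infinity to estimate the double integral on near- and far-field zones, in the spirit of Gao–Yang \cite{BrezisN1}.

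The second step is to maximize $h(t):=J_{A,V_{\mathcal{P}}}(tu_\varepsilon)$ in $t\geq 0$. Dropping the $B$-term momentarily, the supremum of $\tfrac{t^2}{2}a-\tfrac{t^{2\cdot 2_\alpha^*}}{2\cdot 2_\alpha^*}b$ equals $\tfrac{N+2-\alpha}{2(2N-\alpha)}(a^{2N-\alpha}/b^{N-2})^{1/(N+2-\alpha)}$; substituting $a=\|u_\varepsilon\|^2_{A,V_{\mathcal{P}}}$ and $b=D(u_\varepsilon)$ and expanding via the first step yields
\[\sup_{t\geq 0}h(t)\leq \frac{N+2-\alpha}{2(2N-\alpha)}S_A^{(2N-\alpha)/(N+2-\alpha)}+C_1\mathcal{V}(\varepsilon)-C_2\lambda B(u_\varepsilon),\]
for positive constants $C_1,C_2$ independent of $\varepsilon$ and $\lambda$. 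The required strict inequality then reduces to a race between $C_1\mathcal{V}(\varepsilon)$ and $C_2\lambda B(u_\varepsilon)$. In cases $(i)$ and $(iii)$ the leading order of $B(u_\varepsilon)$ exceeds that of $\mathcal{V}(\varepsilon)$ as $\varepsilon\to 0^+$, so the inequality holds for every $\lambda>0$ provided $\varepsilon$ is small enough; in cases $(ii)$ and $(iv)$ the two orders match, and the inequality can only be arranged by taking $\lambda$ sufficiently large.

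The main obstacle is the careful asymptotic analysis of the double integral $B(u_\varepsilon)$ together with the identification of the exact thresholds on $p$ that separate the four cases. These thresholds emerge from matching the scaling of $B(u_\varepsilon)$ against $\mathcal{V}(\varepsilon)$ and depend on $N$ because $\mathcal{V}(\varepsilon)$ has a different order in low dimension (in particular, the anomalous rate $O(\varepsilon)$ for $N=3$ forces the raised threshold $(N+2-\alpha)/(N-2)$, whereas for $N\geq 5$ the smaller $\mathcal{V}(\varepsilon)=O(\varepsilon^2)$ allows the threshold $(2N-2-\alpha)/(N-2)$). A secondary delicate point is to verify that, after the gauge reduction, the magnetic perturbation $|\nabla_A u_\varepsilon|^2-|\nabla u_\varepsilon|^2$ contributes only a remainder negligible against $\mathcal{V}(\varepsilon)$, so that the error estimates of the first step are not disturbed.
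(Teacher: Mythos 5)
Your treatment of cases $(i)$ and $(iii)$ follows essentially the same route as the paper: a Br\'ezis--Nirenberg cut-off of the extremal function, the Gao--Yang asymptotics for the gradient and for $D(u_\varepsilon)$, the elementary maximization of $\frac{t^2}{2}a-\frac{t^{2\cdot 2_\alpha^*}}{2\cdot 2_\alpha^*}b$, and a race between the potential correction $\mathcal{V}(\varepsilon)$ (whose orders $O(\varepsilon)$, $O(\varepsilon^2|\log\varepsilon|)$, $O(\varepsilon^2)$ you compute correctly) and $B(u_\varepsilon)\gtrsim\varepsilon^{2N-\alpha-(N-2)p}$. Two points there still need care. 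First, your error $O(\varepsilon^N)$ for $D(u_\varepsilon)$ is the exponent for the local term $\int|u_\varepsilon|^{2^*}$; the nonlocal estimate of Gao--Yang only gives $O(\varepsilon^{(2N-\alpha)/2})$, and this enters the exponent $\eta=\min\{N-2,\frac{2N-\alpha}{2}\}$ against which $B(u_\varepsilon)$ must win (it does, but the bookkeeping changes when $\alpha\ge 4$). Second, the constant $C_2$ multiplying $\lambda B(u_\varepsilon)$ is really $\frac{1}{2p}t_\varepsilon^{2p}$, where $t_\varepsilon$ is the maximizer of $t\mapsto J_{A,V_{\mathcal{P}}}(tu_\varepsilon)$; you must prove a uniform lower bound $t_\varepsilon\ge C_0>0$ as $\varepsilon\to 0$ (the paper does this by contradiction using $c_\lambda>0$), otherwise the negative term could degenerate.

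The genuine gap is in cases $(ii)$ and $(iv)$. Your inequality
\[\sup_{t\geq 0}h(t)\leq \frac{N+2-\alpha}{2(2N-\alpha)}S_A^{\frac{2N-\alpha}{N+2-\alpha}}+C_1\mathcal{V}(\varepsilon)-C_2\lambda B(u_\varepsilon),\]
with $C_2$ \emph{independent of $\lambda$}, cannot be true: for fixed $\varepsilon$ the right-hand side tends to $-\infty$ as $\lambda\to\infty$, while the left-hand side is always positive (for small $t$ the quadratic term dominates since $2p>2$). The reason is precisely that the maximizer $t_\lambda$ tends to $0$ as $\lambda\to\infty$, so the coefficient $\frac{\lambda}{2p}t_\lambda^{2p}$ of $B(u_\varepsilon)$ does not grow linearly in $\lambda$. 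Moreover, in the ranges $(ii)$ and $(iv)$ the exponent $2N-\alpha-(N-2)p$ is $\ge\eta$, so even for fixed $\lambda$ the negative term does not beat the positive remainders as $\varepsilon\to 0$; no choice of small $\varepsilon$ rescues the argument, and "the two orders match" is not the correct description at the interior of those intervals. The paper handles these cases by an entirely different mechanism: from $g_\lambda'(t_\lambda)=0$ one reads off that $t_\lambda\to 0$ as $\lambda\to+\infty$, whence
\[\max_{t\geq 0}J_{A,V_{\mathcal{P}}}(tu_\varepsilon)<\frac{t_\lambda^2}{2}\int_{\R^N}\left[|\nabla u_\varepsilon|^2+(|A(x)|^2+V_{\mathcal{P}}(x))|u_\varepsilon|^2\right]\dd x\longrightarrow 0,\]
which drops below the (fixed, positive) threshold for $\lambda$ large. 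You need to replace your "race" argument by something of this kind for $(ii)$ and $(iv)$.
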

	
The arguments of this proof were adapted from the articles \cite{BrezisN1,Olimpio}. Observe that the conditions stated in this result are exactly the same of Theorem \ref{teoprob1} and Theorem \ref{teoprop1}.
	
\begin{proof} We know that  $U(x)=\frac{[N(N-2)]^{\frac{N-2}{4}}}{(1+|x|^2)^{\frac{N-2}{2}}}$ is a minimizer for $S$, the best Sobolev constant of the immersion $D^{1,2}(\R^N)\hookrightarrow L^{2^*}(\R^N)$ (see \cite[Theorem 1.42]{Willem} or \cite[Section 3]{ArioliSzulkin}) and also a minimizer for $S_{H,L}$, according to Proposition \ref{propsobol}.
	
If $B_r$ denotes the ball in $\R^N$ of center at origin and radius $r$, consider the balls  $B_{\delta}$ and $B_{2\delta}$ and take $\psi \in C_0^{\infty}(\R^N)$ such that, for a constant $C>0$,
\[\psi(x) =  \left\{
\begin{array}{ll}
1,  &  \textrm{if}\;x \in B_{\delta},\\
0,  &  \textrm{if}\;x \in \R^N\setminus B_{2\delta},\end{array}\right. \qquad
0\leq|\psi(x)|\leq 1,\ \ |D\psi(x)|\leq C,\quad \forall\;x\in\R^N.
\]

We define, for $\varepsilon>0,$
\begin{equation}\label{defU}
U_\varepsilon(x):=\varepsilon^{(2-N)/2}U\left(\displaystyle\frac{x}{\varepsilon}\right)\quad\text{and}\qquad u_\varepsilon (x):=\psi(x)U_\varepsilon(x)
\end{equation}

In the proof we apply the estimates 
\begin{equation}\label{estimgrad}
\displaystyle \int_{\R^N}|\nabla u_{\varepsilon}|^2 \dd x =C(N,\alpha)^{\frac{N-2}{2N-\alpha}\cdot \frac{N}{2}}S_{A}^{\frac{N}{2}}+O(\varepsilon^{N-2})
\end{equation}
and
\begin{equation}\label{estimd}
\int_{\R^N}\int_{\R^N}\frac{|u_{\varepsilon}(x)|^{2_{\alpha}^*} |u_{\varepsilon}(y)|^{2_{\alpha}^*}}{|x-y|^{\alpha}}\dd x\dd y\geq C(N,\alpha)^{\frac{N}{2}}S_{A}^{\frac{2N-\alpha}{2}}-O(\varepsilon^{N-\frac{\alpha}{2}}),
\end{equation}
which were obtained by Gao and Yang \cite{Minbo1}.
\vspace{0.5 cm}

\textbf{Case 1.} $\frac{N+2-\alpha}{N-2}<p<2_{\alpha}^*$ and  $N=3,4$  or $\frac{2N-2-\alpha}{N-2}<p<2_{\alpha}^*$ and $N\geq 5$.

\textit{Proof of Case 1.} Consider the function $f:[0,+\infty)\to \R$ defined by
\[f(t)=J_{A,V_{\mathcal{P}}}(tu_{\varepsilon})=\frac{t^2}{2}\|u_{\varepsilon}\|^2_{A,V_{\mathcal{P}}}-\frac{t^{2\cdot 2_{\alpha}^*}}{2\cdot 2_{\alpha}^*}D(u_{\varepsilon})-\frac{\lambda t^{2p}}{2p}B(u_{\varepsilon}).\]
 
The mountain pass geometry (Lemma \ref{gpm}) implies the existence of $t_{\varepsilon}>0$ such that  $\displaystyle \sup_{t\geq 0} J_{A,V_{\mathcal{P}}} (t u_{\varepsilon})=J_{A,V_{\mathcal{P}}} (t_{\varepsilon} u_{\varepsilon})$.
Since $t_{\varepsilon}>0$, $B(u_{\varepsilon})>0$ and  $f'(t_{\varepsilon})=0$, we obtain
\[0<t_{\varepsilon}<\left(\frac{\|u_{\varepsilon}\|_{A,V_{\mathcal{P}}}^2}{D(u_{\varepsilon})}\right)^{\frac{1}{2(2_{\alpha}^*-1)}}:=S_{A}(\varepsilon),\]
thus implying 
\begin{equation}\label{SHepsilon}
\|u_{\varepsilon}\|_{A,V_{\mathcal{P}}}^2= D(u_{\varepsilon})\left(S_{A}(\varepsilon)\right)^{2(2_{\alpha}^*-1)}.
\end{equation}

Now define $g:[0,S_{A}(\varepsilon)]\to\R$ by
\[g(t)=\frac{t^2}{2}\|u_{\varepsilon}\|^2_{A,V_{\mathcal{P}}}-\frac{t^{2\cdot2_{\alpha}^* }}{2\cdot 2_{\alpha}^*}D(u_\varepsilon).\]
So, 
\[g(t)=\frac{t^2}{2}D(u_{\varepsilon})\left(S_{A}(\varepsilon)\right)^{2(2_{\alpha}^*-1)}-\frac{t^{2\cdot2_{\alpha}^* }}{2\cdot 2_{\alpha}^*}D(u_{\varepsilon}).\]

Since $t>0$ and $D(u_{\varepsilon})>0$, it follows that  $g'(t)>0$,  and, consequently, $g$ is increasing in this interval. Thus,
\begin{equation*}
0<g(t_{\varepsilon})<\frac{N+2-\alpha}{2(2N-\alpha)}D(u_{\varepsilon})(S_{A}(\varepsilon))^{2\cdot 2_{\alpha}^*}.
\end{equation*}

We conclude that
\begin{equation*}
D(u_{\varepsilon})(S_{A}(\varepsilon))^{2\cdot 2_{\alpha}^*}=\frac{(\|u_{\varepsilon}\|_{A,V_{\mathcal{P}}}^2)^{\frac{2N-\alpha}{N+2-\alpha}}}{D(u_{\varepsilon})^{\frac{N-2}{N+2-\alpha}}}
\end{equation*}
and therefore
\begin{equation*}
0<g(t_{\varepsilon})<\frac{N+2-\alpha}{2(2N-\alpha)}\cdot\frac{(\|u_{\varepsilon}\|_{A,V_{\mathcal{P}}}^2)^{\frac{2N-\alpha}{N+2-\alpha}}}{D(u_{\varepsilon})^{\frac{N-2}{N+2-\alpha}}}.
\end{equation*}

Since $J_{A,V_{\mathcal{P}}}(t u_{\varepsilon})=g(t)-\frac{\lambda}{2p}t^{2p}B(u_{\varepsilon})$, we have
\begin{equation*}
J_{A,V_{\mathcal{P}}}(t_{\varepsilon} u_{\varepsilon})< \frac{N+2-\alpha}{2(2N-\alpha)}\left(\frac{\|u_{\varepsilon}\|_{A,V_{\mathcal{P}}}^2}{D(u_{\varepsilon})^{\frac{N-2}{2N-\alpha}}}\right)^{\frac{2N-\alpha}{N+2-\alpha}}-\frac{\lambda}{2p}t_{\varepsilon}^{2p}B(u_{\varepsilon}).
\end{equation*}

But $\|u_{\varepsilon}\|_{A,V_{\mathcal{P}}}^2=\int_{\R^N}|\nabla u_{\varepsilon}|^2 \dd x+\int_{\R^N}(|A(x)|^2+V_{\mathcal{P}}(x)|u_{\varepsilon}|^2) \dd x$ implies
\begin{align*}
\frac{\|u_{\varepsilon}\|_{A,V_{\mathcal{P}}}^2}{D(u_{\varepsilon})^{\frac{N-2}{2N-\alpha}}}
&=\frac{1}{(D(u_{\varepsilon}))^{\frac{N-2}{2N-\alpha}}}\int_{\R^N}|\nabla u_{\varepsilon}|^2 \dd x +\frac{1}{(D(u_{\varepsilon}))^{\frac{N-2}{2N-\alpha}}}\int_{\R^N}(|A(x)|^2+V_{\mathcal{P}}(x)|u_{\varepsilon}|^2 )\dd x.
\end{align*}	
Therefore, we conclude that	
\begin{align*}
J_{A,V_{\mathcal{P}}}(t_{\varepsilon}u_{\varepsilon})
&<\frac{N+2-\alpha}{2(2N-\alpha)}\left(\frac{1}{(D(u_{\varepsilon}))^{\frac{N-2}{2N-\alpha}}}\displaystyle \int_{\R^N}|\nabla u_{\varepsilon}|^2 \dd x \right.\\
&\quad\left.+\frac{1}{(D(u_{\varepsilon}))^{\frac{N-2}{2N-\alpha}}}\displaystyle\int_{\R^N}(|A(x)|^2+V_{\mathcal{P}}(x))|u_{\varepsilon}|^2 \dd x\right)^{\frac{2N-\alpha}{N+2-\alpha}}-\frac{\lambda}{2p}t_{\varepsilon}^{2p}B(u_{\varepsilon}).
\end{align*}

Since, for all   $\beta\geq1$ and any $a,b>0$ we have $(a+b)^{\beta}\leq a^{\beta}+\beta(a+b)^{\beta-1} b$,
considering
\[a=\frac{1}{(D(u_{\varepsilon}))^{\frac{N-2}{2N-\alpha}}}\displaystyle\int_{\R^N}|\nabla u_{\varepsilon}|^2 \dd x,\quad  b=\frac{1}{(D(u_{\varepsilon}))^{\frac{N-2}{2N-\alpha}}}\displaystyle \int_{\R^N}(|A(x)|^2+V_{\mathcal{P}}(x)|u_{\varepsilon}|^2 )\dd x\quad\text{and}\quad \beta=\frac{2N-\alpha}{N+2-\alpha},\]
it follows 
\begin{align}\label{SHepsilon6}
J_{A,V_{\mathcal{P}}}(t_{\varepsilon}u_\varepsilon)
&<\frac{N+2-\alpha}{2(2N-\alpha)}\left[\left(\frac{1}{(D(u_{\varepsilon}))^{\frac{N-2}{2N-\alpha}}}\displaystyle\int_{\R^N}|\nabla u_{\varepsilon}|^2 \dd x \right)^{\frac{2N-\alpha}{N+2-\alpha}}\right.\\ &\quad+\frac{2N-\alpha}{N+2-\alpha}\left(\frac{1}{D(u_{\varepsilon})^{\frac{N-2}{2N-\alpha}}}\displaystyle \int_{\R^N} |\nabla u_{\varepsilon}|^2 \dd x + \frac{1}{(D(u_{\varepsilon})^{\frac{N-2}{2N-\alpha}}}\displaystyle \int_{\R^N}(|A(x)|^2+V_{\mathcal{P}}(x)|u_{\varepsilon}|^2 )\dd x\right)^{\frac{N-2}{N+2-\alpha}}\nonumber\\
&\quad \cdot\left. \frac{1}{((D(u_{\varepsilon}))^{\frac{N-2}{2N-\alpha}}}\displaystyle \int_{\R^N}(|A(x)|^2+V_{\mathcal{P}}(x)|u_{\varepsilon}|^2 )\dd x\right]-\frac{\lambda}{2p}t_{\varepsilon}^{2p}B(u_{\varepsilon})\nonumber.
\end{align}

Taking into account \eqref{estimgrad} and \eqref{estimd}, we conclude that
\begin{align}\label{SHepsilon25}
\left(\frac{1}{(D(u_{\varepsilon}))^{\frac{N-2}{2N-\alpha}}}\displaystyle\int_{\R^N}|\nabla u_{\varepsilon}|^2 \dd x\right)^{\frac{2N-\alpha}{N+2-\alpha}}&\leq\left(\frac{(C(N,\alpha))^{\frac{N-2}{2N-\alpha}\cdot \frac{N}{2}}\cdot S_{H,L}^{\frac{N}{2}}+O(\varepsilon^{N-2})}{\left(C(N,\alpha)^{\frac{N}{2}}S_{H,L}^{\frac{2N-\alpha}{2}}-O(\varepsilon^{\frac{2N-\alpha}{2}})\right)^{\frac{N-2}{2N-\alpha}}}\right)^{\frac{2N-\alpha}{N+2-\alpha}}.
	\end{align}

We also have
\begin{equation*}
\left(\frac{(C(N,\alpha))^{\frac{N-2}{2N-\alpha}\cdot \frac{N}{2}}(S_{H,L})^{\frac{N}{2}}+O(\varepsilon^{N-2})}{\left(C(N,\alpha)^{\frac{N}{2}}S_{H,L}^{\frac{2N-\alpha}{2}}- O(\varepsilon^{\frac{2N-\alpha}{2}})\right)^{\frac{N-2}{2N-\alpha}}}\right)^{\frac{2N-\alpha}{N+2-\alpha}}=(S_{H,L})^{\frac{2N-\alpha}{N+2-\alpha}}\cdot\left( \frac{1+O(\varepsilon^{N-2})}{\left(1-O\left(\varepsilon^{\frac{2N-\alpha}{2}}\right)\right)^{\frac{N-2}{2N-\alpha}}}\right)^{\frac{2N-\alpha}{N+2-\alpha}}
\end{equation*}
and
\begin{align*}
\left(\frac{1+O(\varepsilon^{N-2})}{\left(1-O\left(\varepsilon^{\frac{2N-\alpha}{2}}\right)\right)^{\frac{N-2}{2N-\alpha}}}\right)^{\frac{2N-\alpha}{N+2-\alpha}}
< 1+C(N,\alpha)\cdot \frac{O(\varepsilon^{N-2})+O(\varepsilon^{\frac{2N-\alpha}{2}})}{\left(1-O(\varepsilon^{\frac{2N-\alpha}{2}})\right)^{\frac{N-2}{2N-\alpha}}}.
\end{align*}

We observe that, for $\varepsilon>0$ sufficiently small, it holds
\[(1-O(\varepsilon^{\frac{N-2}{2N-\alpha}}))^{\frac{N-2}{2N-\alpha}}\geq \frac{1}{2}.\] So, 	
\begin{align*}
\left(\frac{1+O(\varepsilon^{N-2})}{\left(1-O\left(\varepsilon^{\frac{2N-\alpha}{2}}\right)\right)^{\frac{N-2}{2N-\alpha}}}\right)^{\frac{2N-\alpha}{N+2-\alpha}}<1+2C(N,\alpha)\left(O\left(\varepsilon^{N-2}\right)+O\left(\varepsilon^{\frac{2N-\alpha}{2}}\right)\right)< 1+ O\left(\varepsilon^{\min\{N-2,\frac{2N-\alpha}{2}\}}\right).
\end{align*}	

Therefore, we conclude that, for any $\varepsilon>0$ sufficiently small, we have
\begin{align}\label{SHepsilon10}
\left(\frac{1}{(D(u_{\varepsilon}))^{\frac{N-2}{2N-\alpha}}}\displaystyle\int_{\R^N}|\nabla u_{\varepsilon}|^2 \dd x\right)^{\frac{2N-\alpha}{N+2-\alpha}}
 < \left(S_{H,L}\right)^{\frac{2N-\alpha}{N+2-\alpha}}+ O\left(\varepsilon^{\min\{N-2,\frac{2N-\alpha}{2}\}}\right).
\end{align}	

Combining \eqref{SHepsilon6} with \eqref{SHepsilon10},   for $\varepsilon$ sufficiently small, we have
\begin{align}\label{SHepsilon11}
J_{A,V_{\mathcal{P}}}(t_{\varepsilon}u_{\varepsilon})&<\frac{N+2-\alpha}{2(2N-\alpha)}\left(S_{H,L} \right)^{\frac{2N-\alpha}{N+2-\alpha}}+ O\left(\varepsilon^{\min\{N-2,\frac{2N-\alpha}{2}\}}\right)\\ &\quad+\displaystyle\frac{1}{2}\left(\frac{1}{D(u_{\varepsilon})^{\frac{N-2}{2N-\alpha}}}\displaystyle \int_{\R^N} |\nabla u_{\varepsilon}|^2 \dd x + \frac{1}{(D(u_{\varepsilon})^{\frac{N-2}{2N-\alpha}}}\displaystyle \int_{\R^N}(|A(x)|^2+V_{\mathcal{P}}(x))|u_{\varepsilon}|^2\dd x\right)^{\frac{N-2}{N+2-\alpha}} \nonumber\\
&\quad\,\cdot\frac{1}{\left(D(u_{\varepsilon}\right)^{\frac{N-2}{2N-\alpha}}}\displaystyle \int_{\R^N}(|A(x)|^2+V_{\mathcal{P}}(x))|u_{\varepsilon}|^2 \dd x-\frac{\lambda}{2p}t_{\varepsilon}^{2p}B(u_{\varepsilon}).\nonumber
\end{align}		

We claim that there is a positive constant $C_0$ such that, for all $\varepsilon>0$
\begin{equation}\label{SHepsilon12}
t_{\varepsilon}^{2p}\geq C_0.
\end{equation}

In fact, suppose that there is a sequence $(\varepsilon_n)\subset \R$, $\varepsilon_n\to 0$ as $n\to\infty$, such that $t_{\varepsilon_n}\to 0$ as $n\to \infty$. Thus,
\[0<c_{\lambda}\leq \sup_{t\geq0}J_{A,V}(t u_{\varepsilon_n})=J_{A,V_{\mathcal{P}}}(t_{\varepsilon_n}u_{\varepsilon_n}).\]
Since  $u_{\varepsilon_n}\in H^1_{A,V_{\mathcal{P}}}(\R^N,\C)$ is bounded and $t_{\varepsilon_n}\to 0$, as $n\to\infty$, we have $t_{\varepsilon_n}u_{\varepsilon_n}\to 0$ as $n\to\infty$, em $H^1_{A,V_{\mathcal{P}}}(\R^N,\C)$.

The continuity of $J_{A,V_{\mathcal{P}}}$ implies that $J_{A,V_{\mathcal{P}}}(t_{\varepsilon_n}u_{\varepsilon_n})\to J_{A,V_{\mathcal{P}}}(0)= 0$. Therefore,
\[0<c_{\lambda}\leq \lim_{n\to\infty}J_{A,V_{\mathcal{P}}}(t_{\varepsilon_n}u_{\varepsilon_n})=0,\]
a contradiction that proves the claim.

From \eqref{SHepsilon}, \eqref{SHepsilon11} and \eqref{SHepsilon12}  we conclude that, for some constant $C_0>0$ and $\varepsilon>0$ sufficiently small we have
\begin{align}\label{SHepsilon13}
J_{A,V_{\mathcal{P}}}(t_{\varepsilon}u_{\varepsilon})
& <\frac{N+2-\alpha}{2(2N-\alpha)}\left(S_{A} \right)^{\frac{2N-\alpha}{N+2-\alpha}}+ O\left(\varepsilon^{\min\{N-2,\frac{2N-\alpha}{2}\}}\right)\\ &\quad+\frac{1}{2}\left(\frac{1}{D(u_{\varepsilon})^{\frac{N-2}{2N-\alpha}}}\|u_{\varepsilon}\|^2_{A V_\mathcal{P}}\right)^{\frac{N-2}{N+2-\alpha}}\nonumber\cdot\frac{1}{\left(D(u_{\varepsilon}\right)^{\frac{N-2}{2N-\alpha}}}\displaystyle \int_{\R^N}(|A(x)|^2+V_{\mathcal{P}}(x))|u_{\varepsilon}|^2 \dd x-C_0B(u_{\varepsilon})\nonumber\\
&<\frac{N+2-\alpha}{2(2N-\alpha)}\left(S_{A} \right)^{\frac{2N-\alpha}{N+2-\alpha}}\nonumber\\ 
&\quad+ O\left(\varepsilon^{\min\{N-2,\frac{2N-\alpha}{2}\}}\right)+\frac{S_{A}(\varepsilon)^2}{2}\cdot \displaystyle \int_{\R^N}(|A(x)|^2+V_{\mathcal{P}}(x))|u_{\varepsilon}|^2 \dd x-C_0B(u_{\varepsilon}).\nonumber 
\end{align}

Thus,
\begin{equation}\label{SHepsilon26} J_{A,V_{\mathcal{P}}}(t_{\varepsilon}u_{\varepsilon})<\frac{N+2-\alpha}{2(2N-\alpha)}\left(S_{A} \right)^{\frac{2N-\alpha}{N+2-\alpha}}+ O(\varepsilon^{\eta})+ C_1\displaystyle \int_{\R^N} a(x)|u_{\varepsilon}|^2 \dd x-C_0B(u_{\varepsilon}),
\end{equation}
where $C_1=\frac{S_{A}(\varepsilon)^2}{2}$, $a(x)=|A(x)|^2+V_p(x)$ and $\eta=\min\{N-2,\frac{2N-\alpha}{2}\}$.

By direct computation we know that, for $\varepsilon<1$,
\begin{align*}
B(u_{\varepsilon})&=\displaystyle\int_{\R^N}\displaystyle\int_{\R^N}\frac{\varepsilon^{\frac{(2-N)p}{2}}[N(N-2)]^{\frac{(N-2)p}{4}}\varepsilon^{\frac{(2-N)p}{2}}[N(N-2)]^{\frac{(N-2)p}{4}}}{(1+|\frac{x}{\varepsilon}|^2)^{\frac{(N-2)p}{2}}|x-y|^{\alpha}(1+|\frac{y}{\varepsilon}|^2)^{\frac{(N-2)p}{4}}}\dd x\dd y\\
&=[N(N-2)]^{\frac{(N-2)p}{2}}\varepsilon^{2N-\alpha-(N-2)p}\displaystyle\int_{B_{\frac{\delta}{\varepsilon}}}\displaystyle\int_{B_{B_{\frac{\delta}{\varepsilon}}}}\frac{1}{(1+|x|^2)^{\frac{(N-2)p}{2}}|x-y|^{\alpha}(1+|y|^2)^{\frac{(N-2)p}{2}}}\dd x\dd y\\
&\geq [N(N-2)]^{\frac{(N-2)p}{2}}\varepsilon^{2N-\alpha-(N-2)p}\displaystyle\int_{B_{\delta}}\displaystyle\int_{B_{\delta}}\frac{1}{(1+|x|^2)^{\frac{(N-2)p}{2}}|x-y|^{\alpha}(1+|y|^2)^{\frac{(N-2)p}{2}}}\dd x\dd y.
\end{align*}
Therefore,
\begin{equation*}
B(u_{\varepsilon})\geq [N(N-2)]^{\frac{(N-2)p}{2}}\varepsilon^{2N-\alpha-(N-2)p}\displaystyle\int_{B_{\delta}}\displaystyle\int_{B_{\delta}}\frac{1}{(1+|x|^2)^{\frac{(N-2)p}{2}}|x-y|^{\alpha}(1+|y|^2)^{\frac{(N-2)p}{2}}}\dd x\dd y.
\end{equation*}

Since $a(x)$ is bounded, \eqref{SHepsilon26} and the last inequality imply that
\begin{equation}\label{SHepsilon16}
J_{A,V_{\mathcal{P}}}(t_{\varepsilon}u_{\varepsilon})<
\frac{N+2-\alpha}{2(N-\alpha)}\left(S_{A} \right)^{\frac{2N-\alpha}{N+2-\alpha}}+ O(\varepsilon^{\eta})+ C_2\displaystyle \int_{\R^N}|u_{\varepsilon}(x)|^2 \dd x-C_3 \varepsilon^{2N-\alpha-(N-2)p}.
\end{equation}

We are going to show that
\begin{equation}\label{SHepsilon18}
\lim_{\varepsilon\to 0}\varepsilon^{-\eta}\left(C_2\displaystyle \int_{\R^N}|u_{\varepsilon}(x)|^2 \dd x-C_3\varepsilon^{2N-\alpha-(N-2)p}\right)=-\infty.
\end{equation}

In order to do that, it suffices to show that
\begin{equation}\label{SHepsilon19}
\lim_{\varepsilon\to 0}\varepsilon^{-\eta}\left(C_2\displaystyle \int_{B_{\delta}}|u_{\varepsilon}(x)|^2 \dd x-C_3\varepsilon^{2N-\alpha-(N-2)p}\right)=-\infty
\end{equation}
and
\begin{equation}\label{SHepsilon20}
C_2\displaystyle \int_{B_{2\delta}\setminus B_{\delta}}|u_{\varepsilon}(x)|^2 \dd x-C_3\varepsilon^{2N-\alpha-(N-2)p}=O(\varepsilon^{\eta}).
\end{equation}

Assuming \eqref{SHepsilon19} and \eqref{SHepsilon20}, let us proceed with our proof. Since
\[O(\varepsilon^{\eta})+C_2\displaystyle \int_{\R^N}|u_{\varepsilon}|^2 \dd x-C_3\varepsilon^{2N-\alpha-(N-2)p}=\varepsilon^{\eta}\left[\frac{O(\varepsilon^{\eta})}{\varepsilon^\eta}+\varepsilon^{-\eta}\left(C_2\displaystyle \int_{\R^N}|u_{\varepsilon}(x)|^2 \dd x-C_3\varepsilon^{2N-\alpha-(N-2)p}\right)\right],\]
from \eqref{SHepsilon18} follows
\begin{equation}\label{SHepsilon22}
O(\varepsilon^{\eta})+C_2\displaystyle \int_{\R^N}|u_{\varepsilon}(x)|^2 \dd x-C_3\varepsilon^{2N-\alpha-(N-2)p}<0
\end{equation}
for $\varepsilon>0$ sufficiently small.

Thus, \eqref{SHepsilon16} and \eqref{SHepsilon22} imply
\begin{align*}
\sup_{t\geq0} J_{A,V_{\mathcal{P}}}(t u_{\varepsilon})
<\frac{N+2-\alpha}{2(2N-\alpha)}\left(S_{A} \right)^{\frac{2N-\alpha}{N+2-\alpha}}
\end{align*}
for $\varepsilon>0$ sufficiently small and fixed. Once \eqref{SHepsilon19} and \eqref{SHepsilon20} are verified, the proof of Case 1 is complete.
$\hfill\Box$\end{proof}\vspace*{.4cm}

We now prove \eqref{SHepsilon19}.
\begin{lemma}\label{convergiepsion1} If  $\frac{N+2-\alpha}{N-2}<p<2_{\alpha}^*$ and $N=3,4$ or $\frac{2N-2-\alpha}{N-2}<p<2_{\alpha}^*$ and $N\geq 5$ it follows that
\begin{equation*}
\lim_{\varepsilon\to 0}\varepsilon^{-\eta}\left(C_2\displaystyle \int_{B_{\delta}}|u_{\varepsilon}(x)|^2 \dd x-C_3\varepsilon^{2N-\alpha-(N-2)p}\right)=-\infty
\end{equation*}
\end{lemma}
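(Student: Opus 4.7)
The plan is to reduce the claim to an elementary comparison of powers of $\varepsilon$. Since $\psi \equiv 1$ on $B_\delta$, we have $u_\varepsilon = U_\varepsilon$ there, and the change of variable $y = x/\varepsilon$ gives
\[\int_{B_\delta} |u_\varepsilon(x)|^2\,\dd x = [N(N-2)]^{(N-2)/2}\,\varepsilon^2 \int_{B_{\delta/\varepsilon}}\frac{\dd y}{(1+|y|^2)^{N-2}}.\]
The asymptotics of the remaining integral depends only on the dimension, since the integrand behaves like $|y|^{-2(N-2)}$ at infinity: for $N \geq 5$ it converges to a finite constant, for $N = 4$ it diverges like $|\log\varepsilon|$, and for $N = 3$ it diverges like $\delta/\varepsilon$. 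Thus
\[\int_{B_\delta} |u_\varepsilon|^2\,\dd x = \begin{cases} O(\varepsilon) & \text{if } N=3, \\ O(\varepsilon^2|\log\varepsilon|) & \text{if } N=4, \\ O(\varepsilon^2) & \text{if } N\geq 5. \end{cases}\]

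Next I would expand $\eta = \min\{N-2,(2N-\alpha)/2\}$ case by case: $\eta = 1$ for $N=3$, $\eta = 2$ for $N=4$, and for $N\geq 5$ one checks that $\eta = N-2$ when $\alpha \leq 4$ and $\eta = N-\alpha/2$ when $\alpha > 4$ (both giving $\eta < N-2$ or $\eta = N-2$). In each subcase the first summand $C_2\varepsilon^{-\eta}\int_{B_\delta}|u_\varepsilon|^2\,\dd x$ is, respectively, $O(1)$, $O(|\log\varepsilon|)$, or $O(\varepsilon^{2-\eta})$ with $2-\eta < 0$. Meanwhile the negative summand equals $-C_3\,\varepsilon^{2N-\alpha-(N-2)p-\eta}$.

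The decisive step is then the power comparison: I must show that the exponent of $\varepsilon$ in the negative term is strictly smaller than the one in the positive term, so that the negative term dominates as $\varepsilon \to 0$. The required inequality is
\[2N-\alpha-(N-2)p-\eta < \min\{0, 2-\eta\} \quad \text{(with the log in dim.\ 4 absorbed by any strict loss)},\]
which in every case reduces to $(N-2)p > 2N-\alpha-\eta$, i.e.\ to
\[p > \frac{N+2-\alpha}{N-2}\ (N=3,4), \qquad p > \frac{2N-2-\alpha}{N-2}\ (N\geq 5).\]
These are precisely the hypotheses of Case 1 (and they are strict), so the exponent is negative and strictly smaller than that of the positive term, establishing the desired limit.

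\textbf{Main obstacle.} The main technical point is the dimension-by-dimension bookkeeping, in particular the subtlety in dimension $4$ where the positive term carries an extra logarithmic factor $|\log\varepsilon|$; one needs the strict inequality $p > (6-\alpha)/2$ to ensure that $\varepsilon^{6-\alpha-2p} \to \infty$ fast enough to beat $|\log\varepsilon|$. For $N\geq 5$ with $\alpha > 4$ the correct choice of $\eta = N-\alpha/2$ (rather than $N-2$) must be tracked carefully, but the final algebraic condition is the same in all subcases and matches the Case 1 hypotheses exactly, which closes the argument.
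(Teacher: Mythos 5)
Your proposal is correct and follows essentially the same route as the paper: the same rescaling of $\int_{B_\delta}|u_\varepsilon|^2$, the same dimension-by-dimension asymptotics ($O(\varepsilon)$, $O(\varepsilon^2|\log\varepsilon|)$, $O(\varepsilon^2)$), and the same final power comparison yielding $p>\frac{N+2-\alpha}{N-2}$ for $N=3,4$ and $p>\frac{2N-2-\alpha}{N-2}$ for $N\geq5$, where the paper merely makes the $N=3$ and $N=4$ integrals explicit via $\arctan$ and a logarithm. One cosmetic slip: your ``unified'' reduction $(N-2)p>2N-\alpha-\eta$ is only valid for $N=3,4$ (for $N\geq 5$ the correct reduction of $2N-\alpha-(N-2)p-\eta<2-\eta$ is $(N-2)p>2N-\alpha-2$), but since you immediately state the correct per-case conditions this does not affect the argument.
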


\begin{proof} This limit is evaluated considering the cases $N=3,$ $N=4$ and $N\geq 5$ as follows. We initially observe that direct computation allows us to conclude that
\begin{equation}\label{SHepsilon17}
\displaystyle \int_{B_{\delta}}|u_{\varepsilon}(x)|^2 \dd x= N\omega_N [N(N-2)]^{\frac{N-2}{2}}\varepsilon^2\displaystyle\int_{0}^{\frac{\delta}{\varepsilon}}\frac{r^{N-1}}{(1+r^2)^{N-2}} \dd r,
\end{equation}
where $\omega_N$ denotes the volume of the unit ball in $\R^N$.

Now, define
\begin{align*}
I_{\varepsilon}:&=\varepsilon^{-\eta}\left(C_2\displaystyle \int_{B_{\delta}}|u_{\varepsilon}(x)|^2 \dd x-C_3\varepsilon^{2N-\alpha-(N-2)p}\right)=\varepsilon^{-\eta}\left(C_4\varepsilon^2\displaystyle\int_{0}^{\frac{\delta}{\varepsilon}}\frac{r^{N-1}}{(1+r^2)^{N-2}} \dd r-C_3\varepsilon^{2N-\alpha-(N-2)p}\right),
\end{align*}
the second equality being a consequence of \eqref{SHepsilon17}.

$\bullet$\ \textbf{The case $\mathbf{N=3}$.} In this case we have $5-\alpha<p<2^*_\alpha$ and therefore $5-\alpha-p<0$. We also observe that $0<\alpha<N$ implies $\min\{N-2,\frac{2N-\alpha}{2}\}=N-2=1$.

It is easy to show that
\begin{equation}\label{SHepsilon28}
\varepsilon^2\displaystyle\int_{0}^{\frac{\delta}{\varepsilon}}\frac{r^2}{1+r^2} \dd r=\varepsilon\left(\delta-\varepsilon\arctan\left(\frac{\delta}{\varepsilon}\right)\right).
\end{equation}
	
Thus,
\begin{align*}
I_{\varepsilon}&=C_4\left(\delta-\varepsilon\arctan\left(\frac{\delta}{\varepsilon}\right)\right)-C_3\varepsilon^{5-\alpha-p}.
\end{align*}
Our claim follows.
		
$\bullet$\ \textbf{The case $\mathbf{N= 4}$.}   In this case, $\frac{6-\alpha}{2}<p<2_{\alpha}^*$ implies $6-\alpha-2p<0$ and $\min\{N-2,\frac{2N-\alpha}{2}\}=N-2=2$, since $0<\alpha<4$.

Changing variables, we obtain
\begin{equation}\label{SHepsilon21}
\varepsilon^2\displaystyle\int_{0}^{\frac{\delta}{\varepsilon}}\frac{r^3}{(1+r^2)^2} \dd r=\frac{\varepsilon^2}{2}\left[\ln \left(1+\frac{\delta^2}{\varepsilon^2}\right) + \frac{\varepsilon^2}{\varepsilon^2+\delta^2}-1\right].
\end{equation}

So, 
\begin{align*}
I_{\varepsilon}&=\frac{C_4}{2} \left(\ln \left(1+\frac{\delta^2}{\varepsilon^2}\right) + \frac{\varepsilon^2}{\varepsilon^2+\delta^2}-1\right)-C_3\varepsilon^{6-\alpha-2p}\\
&=\ln\left(1+\frac{\delta^2}{\varepsilon^2}\right)\left[\frac{C_4}{2}+\frac{C_4}{2\ln\left(1+\frac{\delta^2}{\varepsilon^2}\right)}\frac{\varepsilon^2}{\varepsilon^2+\delta^2}-\frac{C_4}{2\ln\left(1+\frac{\delta^2}{\varepsilon^2}\right)}-C_3\frac{\varepsilon^{6-\alpha-2p}}{\ln\left(1+\frac{\delta^2}{\varepsilon^2}\right)}\right].
\end{align*}
Our claim follows by applying L'Hospital rule.	

\vspace{0.2cm}	

$\bullet$\ \textbf{The case $\mathbf{N\geq 5}$}.  We have
\begin{align*}
I_{\varepsilon}&=\varepsilon^{2-\min\{N-2,\frac{2N-\alpha}{2}\}}\left(C_4\displaystyle\int_{0}^{\frac{\delta}{\varepsilon}}\frac{r^{N-1}}{(1+r^2)^{N-2}} \dd r-C_3\varepsilon^{2N-\alpha-(N-2)p-2}\right).
\end{align*}
It is easy to show that, if $N\geq 5$, then the integral
\[\lim_{\varepsilon\to 0}\int_{0}^{\frac{\delta}{\varepsilon}}\frac{r^{N-1}}{(1+r^2)^{N-2}} \dd r\]
converges.

There are two cases to be considered: 
\begin{itemize} \item $0<\alpha<4$ and  $N\geq 5$;
	\item  $\alpha\geq 4$ and $N\geq 5$.
\end{itemize}

Let us suppose $0<\alpha<4$ and  $N\geq 5$. Since  $0<\alpha<4$  we have
\[2-\eta=2-\min\{N-2, \frac{2N-\alpha}{2}\}=-N+4<0.\]

Also $\frac{2N-\alpha-2}{N-2}<p<\frac{2N-\alpha}{N-2}$ implies $2N-\alpha-(N-2)p-2<0$. Therefore,
$I_{\varepsilon}\to -\infty$ as $\varepsilon\to 0$.

Now we consider the case  $\alpha\geq 4$ and $N\geq 5$. We have $N-2\geq \frac{2N-\alpha}{2}$ and therefore
\[2-\eta=2-\min\bigg\{N-2, \frac{2N-\alpha}{2}\bigg\}=2-N+\frac{\alpha}{2}<0.\]

Since 
\[I_{\varepsilon}=\varepsilon^{2-N+\frac{\alpha}{2}}\left[C_4\int_{0}^{\frac{\delta}{\varepsilon}}\frac{r^{N-1}}{(1+r^2)^{N-2}} \dd r-C_3\varepsilon^{2N-\alpha-(N-2)p-2}\right],\]
we conclude that $I_{\varepsilon}\to -\infty$. We are done.
$\hfill\Box$\end{proof}

We now prove \eqref{SHepsilon20}.
\begin{lemma}\label{SHL}It holds
\begin{equation*}
C_2\displaystyle \int_{B_{2\delta}\setminus B_{\delta}}|u_{\varepsilon}(x)|^2 \dd x-C_3\varepsilon^{2N-\alpha-(N-2)p}=O(\varepsilon^{\eta}).
\end{equation*}
\end{lemma}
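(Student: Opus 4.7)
The plan is to bound the integral $\int_{B_{2\delta}\setminus B_{\delta}}|u_{\varepsilon}|^{2}\,\dd x$ by a direct pointwise estimate, exploiting the fact that on the annulus $B_{2\delta}\setminus B_{\delta}$ we stay uniformly bounded away from the concentration point $x=0$ of $U_\varepsilon$. Since $-C_{3}\varepsilon^{2N-\alpha-(N-2)p}\le 0$, the left-hand side of the claimed identity is already bounded above by $C_{2}\int_{B_{2\delta}\setminus B_{\delta}}|u_{\varepsilon}|^{2}\,\dd x$, so it suffices to show that this integral is $O(\varepsilon^{\eta})$.

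From the definition \eqref{defU}, I would first record the identity
\[
U_{\varepsilon}(x)^{2}=\varepsilon^{2-N}U(x/\varepsilon)^{2}=\frac{[N(N-2)]^{(N-2)/2}\,\varepsilon^{N-2}}{(\varepsilon^{2}+|x|^{2})^{N-2}}.
\]
Since $|\psi|\le 1$, the bound $|u_{\varepsilon}|\le U_{\varepsilon}$ holds pointwise. For $\varepsilon<\delta$ and $x\in B_{2\delta}\setminus B_{\delta}$ we have $\varepsilon^{2}+|x|^{2}\ge|x|^{2}\ge \delta^{2}$, which yields the uniform pointwise bound
\[
U_{\varepsilon}(x)^{2}\le \frac{[N(N-2)]^{(N-2)/2}}{\delta^{2(N-2)}}\,\varepsilon^{N-2}\quad\text{on }B_{2\delta}\setminus B_{\delta}.
\]

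Integrating this inequality over the annulus $B_{2\delta}\setminus B_{\delta}$, whose volume is a constant depending only on $N$ and $\delta$, gives
\[
\int_{B_{2\delta}\setminus B_{\delta}}|u_{\varepsilon}(x)|^{2}\,\dd x \le C(N,\delta)\,\varepsilon^{N-2}.
\]
Since $\eta=\min\{N-2,(2N-\alpha)/2\}\le N-2$, for $\varepsilon\in(0,1)$ one has $\varepsilon^{N-2}\le \varepsilon^{\eta}$, and combining this with the sign observation in the first paragraph yields the claim. I do not foresee a serious obstacle: the argument reduces to a routine decay estimate for $U_{\varepsilon}$ on a region separated from its concentration point, together with the elementary inequality $N-2\ge\eta$ that is built into the definition of $\eta$.
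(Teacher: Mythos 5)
Your proof is correct and follows essentially the same route as the paper: both discard the nonpositive term $-C_3\varepsilon^{2N-\alpha-(N-2)p}$ and reduce the claim to the pointwise decay of $U_\varepsilon$ on the annulus $B_{2\delta}\setminus B_\delta$, which is uniformly separated from the concentration point. In fact your explicit rate $\int_{B_{2\delta}\setminus B_{\delta}}|u_\varepsilon|^2\,\dd x\le C(N,\delta)\,\varepsilon^{N-2}$ combined with $\eta\le N-2$ is the cleaner version of the argument, since the paper instead asserts a pointwise bound $U_\varepsilon^2(x)\le\varepsilon^{1+\eta}$ for $|x|\ge\delta$ with $\delta$ fixed, which fails uniformly as $\varepsilon\to 0$ whenever $\eta>N-3$ (e.g.\ $N=3,4$), whereas your estimate delivers exactly what is needed for the limit \eqref{SHepsilon18}.
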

\begin{proof}Fix $\delta>0$ sufficiently large so that $U^2_{\varepsilon} (x)\leq \varepsilon^{1+\eta}$ if $|x|\geq \delta$. Since
\begin{align*}
\frac{1}{\varepsilon^{\eta}}\left[C_2\displaystyle \int_{B_{2\delta}\setminus B_{\delta}}|u_{\varepsilon}(x)|^2 \dd x-C_3\varepsilon^{2N-\alpha-(N-2)p}\right]&<\frac{C_2}{\varepsilon^{\eta}}\displaystyle \int_{B_{2\delta}\setminus B_{\delta}} \psi^2 (x) U^2_{\varepsilon}(x)\dd x\leq C_2\varepsilon \|\psi\|_{2}\\
&\leq C_1\varepsilon \|\psi\|_{A,V_{\mathcal{P}}},
\end{align*}
our proof is complete.
$\hfill\Box$\end{proof}
\vspace{0.5 cm}

\textbf{Case 2.} For $\lambda$ sufficiently large, $\frac{2N-\alpha}{N}<p\leq \frac{N+2-\alpha}{N-2}$ and $N=3,4$ or $\frac{2N-\alpha}{N}<p\leq \frac{2N-2-\alpha}{N-2}$ and  $N\geq 5.$ 

\vspace{0.5 cm}

\textit{Proof of Case 2.} Define $g_\lambda:[0,+\infty)\to\R$ by \[g_{\lambda} (t)= J_{A,V_{\mathcal{P}}}(t u_{\varepsilon})= \frac{t^2}{2}\int_{\R^N} \left[|\nabla u_{\varepsilon}|^2+\left(|A(x)|^2+V_{\mathcal{P}}(x)\right)|u_{\varepsilon}|^2\right]\dd x-\frac{\lambda}{2p}t^{2p}B(u_{\varepsilon})-\frac{1}{2\cdot 2_{\alpha}^*}t^{2\cdot 2_{\alpha}^*} D(u_{\varepsilon}).\]
We already know that
$\displaystyle\lim_{t\to +\infty} g_{\lambda}(t)=-\infty$ as $t\to +\infty$ and $\displaystyle \max_{t\geq 0} g_{\lambda} (t) $ is attained at some $t_{\lambda}>0$ satisfying
\[t_{\lambda}\int_{\R^N} \left[|\nabla u_{\varepsilon}|^2+\left(|A(x)|^2+V_{\mathcal{P}}(x)\right)|u_{\varepsilon}|^2\right]\dd x= \lambda t_{\lambda}^{2p-1}B(u_{\varepsilon})+t_{\lambda}^{2\cdot 2_{\alpha}^*-1} D(u_{\varepsilon}),\]
that is,
\[\int_{\R^N} \left[|\nabla u_{\varepsilon}|^2+\left(|A(x)|^2+V_{\mathcal{P}}(x)\right)|u_{\varepsilon}|^2\right]\dd x= \lambda t_{\lambda}^{2(p-1)}B(u_{\varepsilon})+t_{\lambda}^{2( 2_{\alpha}^*-1)} D(u_{\varepsilon}),\]
since $g'_{\lambda} (t_\lambda)=0$. Thus $t_{\lambda}\to 0$ as $\lambda\to+\infty$ and
\begin{align*}
\max_{t\geq 0} J_{A,V_{\mathcal{P}}}(t u_{\varepsilon})
&= \frac{{t_\lambda}^2}{2}\int_{\R^N} \left[|\nabla u_{\varepsilon}(x)|^2+\left(|A(x)|^2+V_{\mathcal{P}}(x)\right)|u_{\varepsilon}(x)|^2\right]\dd x-\frac{\lambda}{2p}{t_\lambda}^{2p}B(u_{\varepsilon})-\frac{1}{2\cdot 2_{\alpha}^*}t^{2\cdot 2_{\alpha}^*} D(u_{\varepsilon})\\
&<  \frac{{t_\lambda}^2}{2}\int_{\R^N} \left[|\nabla u_{\varepsilon}|^2+\left(|A(x)|^2+V_{\mathcal{P}}(x)\right)|u_{\varepsilon}(x)|^2\right]\dd x.
\end{align*}

Since $t_{\lambda}\to 0$ as $\lambda\to +\infty$ and $\frac{N+2-\alpha}{2(N-\alpha)}(S_{A})^{\frac{2N-\alpha}{N+2-\alpha}}>0$, we conclude that
\[\frac{{t_\lambda}^2}{2}\int_{\R^N} \left[|\nabla u_{\varepsilon}|^2+\left(|A(x)|^2+V_{\mathcal{P}}(x)\right)|u_{\varepsilon}(x)|^2\right]\dd x<\frac{N+2-\alpha}{2(2N-\alpha)}(S_{A})^{\frac{2N-\alpha}{N+2-\alpha}},\]
for $\lambda>0$ sufficiently large.

Therefore,
\[\sup_{t\geq 0}J_{A,V_{\mathcal{P}}}(t u_{\varepsilon})<\frac{N+2-\alpha}{2(2N-\alpha)}(S_{A})^{\frac{2N-\alpha}{N+2-\alpha}}\]
for $\lambda>0$ sufficiently large.
$\hfill\Box$\vspace*{.4cm}

\subsection{The proof of Theorem \ref{teoprob1}}
Some arguments of this proof were adapted from the articles \cite{Giovany,Olimpio}.
	
Maintaining the notation introduced in subsection \ref{periodic}, consider the energy functional $I_{A,V}:H^1_{A,V}(\R^N,\C)\to \R$ given by
\begin{align*}
I_{A,V}(u)=\frac{1}{2}{\|u\|^2}_{A,V} -\frac{1}{2\cdot 2_{\alpha}^*}D(u)-\frac{\lambda}{2p}B(u).
\end{align*}

We denote by $\mathcal{N}_{A,V}$ the Nehari Manifold related to $I_{A,V}$, that is,
\begin{align*}\mathcal{N}_{A,V}&=\left\{u\in H^1_{A,V}(\R^N,\C)\setminus\{0\}\,:\,{\|u\|}^2_{A,V}=D(u)+\lambda B(u)\right\},
\end{align*}
which is non-empty as a consequence of Theorem \ref{teoprop1}. As before, the functional $I_{A,V}$ satisfies the mountain pass geometry. Thus, there exists a sequence  $(u_n)\subset H^1_{A,V}(\mathbb{R}^N,\mathbb{C})$ such that
\[I'_{A,V}(u_n)\to 0\qquad\textrm{and}\qquad I_{A,V}(u_n)\to d_{\lambda},\]
where $d_{\lambda}$ is the minimax level, also characterized by
\begin{equation*}
d_{\lambda}=\inf_{u\;\in H^1_{A,V}(\R^N,\C)\setminus\{0\}}\max_{t\geq 0}I_{A,V}(tu)=\inf_{\mathcal{N}_{A,V}} I_{A,V}(u)>0.
\end{equation*}
	
We stress that, as a consequence of ($V_2$), we have $I_{A,V}(u)<J_{A,V_{\mathcal P}}(u)$ for all $u\in H^1_{A,V}(\R^N,\C)$.
	
The next lemma compares the levels $d_{\lambda}$ and $c_{\lambda}$.
	
\begin{lemma}\label{desiglevels}
The levels $d_{\lambda}$ and $c_{\lambda}$ verify the inequality
\[d_{\lambda}<c_{\lambda} <\frac{N+2-\alpha}{2(2N-\alpha)}(S_{A})^{\frac{2N-\alpha}{N+2-\alpha}}\]
for all $\lambda>0$.
\end{lemma}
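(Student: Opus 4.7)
The second inequality $c_\lambda<\frac{N+2-\alpha}{2(2N-\alpha)}(S_A)^{\frac{2N-\alpha}{N+2-\alpha}}$ was already established as the Claim in the proof of Theorem \ref{teoprop1}, through the concentrating family $u_\varepsilon$ of Lemma \ref{mainlemma}. Thus the content of this lemma reduces to the strict inequality $d_\lambda<c_\lambda$. The driving identity is
\[
I_{A,V}(u)=J_{A,V_{\mathcal{P}}}(u)-\frac{1}{2}\int_{\R^N}W(x)|u|^2\,\dd x,
\]
which follows from $V=V_{\mathcal{P}}-W$ since the nonlinear parts of the two functionals coincide. Here one uses that $H^1_{A,V}(\R^N,\C)$ and $H^1_{A,V_{\mathcal{P}}}(\R^N,\C)$ agree as vector spaces with equivalent norms, because $W\in L^{N/2}(\R^N)$ and the Sobolev embedding $H^1_A\hookrightarrow L^{2^*}$ control $\int W|u|^2$.

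The strategy is to use the periodic ground state as a test element on the Nehari manifold $\mathcal{N}_{A,V}$. By Theorem \ref{teoprop1}, there exists $u_0\in\mathcal{M}_{A,V_{\mathcal{P}}}$ with $J_{A,V_{\mathcal{P}}}(u_0)=c_\lambda$. An almost verbatim repetition of Lemma \ref{equality} for $I_{A,V}$ (the nonlinear terms are unchanged, and the mountain pass geometry holds by $(V_2)$) yields a unique $t_0>0$ with $t_0 u_0\in\mathcal{N}_{A,V}$ and $\max_{t\geq 0}I_{A,V}(tu_0)=I_{A,V}(t_0 u_0)$. Combining the Nehari characterization of $d_\lambda$ with the identity above, and noting that the uniqueness part of Lemma \ref{equality} forces $t\mapsto J_{A,V_{\mathcal{P}}}(tu_0)$ to attain its maximum at $t=1$, one obtains
\[
d_\lambda\leq I_{A,V}(t_0 u_0)=J_{A,V_{\mathcal{P}}}(t_0 u_0)-\frac{t_0^2}{2}\int_{\R^N}W(x)|u_0|^2\,\dd x\leq c_\lambda-\frac{t_0^2}{2}\int_{\R^N}W(x)|u_0|^2\,\dd x.
\]

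The main obstacle is to show that the correction $\int_{\R^N}W(x)|u_0|^2\,\dd x$ is strictly positive. Since by $(V_2)$ the function $W$ is nonnegative and strictly positive on a set of positive measure, it suffices to prove that $|u_0|>0$ almost everywhere on $\R^N$. The plan is to apply the diamagnetic inequality: $|u_0|\in H^1(\R^N)$ is nonnegative and, since $u_0$ is a nontrivial solution of \eqref{prop1.1}, a Kato-type computation produces a distributional inequality of the form $-\Delta|u_0|+V_{\mathcal{P}}|u_0|\leq ((1/|x|^{\alpha})*|u_0|^{2_{\alpha}^*})|u_0|^{2_{\alpha}^*-1}+\lambda((1/|x|^{\alpha})*|u_0|^{p})|u_0|^{p-1}$ on $\R^N$. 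Because the right-hand side is nonnegative and $|u_0|\not\equiv 0$, the strong maximum principle (or the Harnack inequality applied locally to the nonnegative subsolution $|u_0|$) forces $|u_0|>0$ almost everywhere, and hence $\int W|u_0|^2\,\dd x>0$, completing the proof of $d_\lambda<c_\lambda$.
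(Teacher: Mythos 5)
Your main argument is the same as the paper's: take the periodic ground state $u_0$ with $J_{A,V_{\mathcal{P}}}(u_0)=c_\lambda$, project it onto $\mathcal{N}_{A,V}$ via $t_0u_0$, use $d_\lambda\leq I_{A,V}(t_0u_0)$, the identity $I_{A,V}=J_{A,V_{\mathcal{P}}}-\tfrac12\int W|\cdot|^2$, and the fact that $t\mapsto J_{A,V_{\mathcal{P}}}(tu_0)$ peaks at $t=1$ because $u_0\in\mathcal{M}_{A,V_{\mathcal{P}}}$. That chain of inequalities is exactly the paper's proof, and it is correct modulo the strict positivity of $\int_{\R^N}W|u_0|^2\,\dd x$ — a point the paper simply asserts as ``a consequence of $(V_2)$'' and which you rightly isolate as the real issue.

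Where your proposal has a genuine gap is in the resolution of that issue. Kato's inequality for the magnetic Laplacian gives $\Delta|u_0|\geq \mathfrak{Re}\big(\operatorname{sgn}(\bar u_0)\,\Delta_A u_0\big)$, which for a solution of \eqref{prop1.1} yields precisely the \emph{subsolution} inequality you wrote, $-\Delta|u_0|+V_{\mathcal{P}}|u_0|\leq h$ with $h\geq 0$. But the strong maximum principle (Vázquez) and the weak Harnack inequality, which are what produce strict positivity from a vanishing point, apply to nonnegative \emph{supersolutions}, i.e.\ they require $-\Delta w + c(x)w\geq 0$ (equivalently an upper bound $\Delta w\leq c(x)w$), and Kato's inequality only gives the opposite bound on $\Delta|u_0|$. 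A nonnegative function satisfying only a subsolution inequality can perfectly well vanish on large sets (any nonnegative compactly supported smooth function does), so the step ``the strong maximum principle forces $|u_0|>0$ a.e.''\ does not follow as stated; for complex-valued solutions of magnetic equations, positivity of the modulus is in fact delicate and is not available by this route. Since all you actually need is that $|u_0|$ does not vanish a.e.\ on the positive-measure set where $W>0$, you should either argue differently (e.g.\ via unique continuation, or by observing that if $\int W|u_0|^2=0$ one can still run the comparison after a suitable $\Z^N$-translation of $u_0$, which leaves $J_{A,V_{\mathcal{P}}}$ and $\mathcal{M}_{A,V_{\mathcal{P}}}$ invariant but moves the support relative to $W$), or flag the point as an implicit assumption, as the paper effectively does.
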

\begin{proof}
Let $u$ be the ground state solution of problem \eqref{prop1.1} and consider $\bar{t}_u>0$ such that  $\bar{t}_u u \in \mathcal{N}_{A,V}$, that is
\begin{equation*}
0<d_{\lambda}\leq \sup_{t\geq 0} I_{A,V}(tu)=I_{A,V}(\bar{t}_u u).
\end{equation*}
		
It follows from $(V_2)$ that
\[0<d_{\lambda}\leq I_{A,V}(\bar{t}_u u)<J_{A,V_{\mathcal{P}}}(\bar{t}_u u)\leq \sup_{t\geq 0} J_{A,V_{\mathcal{P}}}(tu)= J_{A,V_{\mathcal{P}}}(u)=c_{\lambda}.\]

Therefore,
\[d_\lambda<c_{\lambda}.\]
The second inequality was already known.
$\hfill\Box$\end{proof}\vspace*{.2cm}

\textit{Proof of Theorem \ref{teoprob1}.}
Let $(u_n)$ be a  $(PS)_{d_{\lambda}}$ sequence for $I_{A,V}$.  As before,  $(u_n)$ is bounded in $H^1_{A,V}(\R^N,\C)$. Thus, there exists $u\in H^1_{A,V}(\R^N,\C)$ such that
\[u_n\rightharpoonup  u\ \ \textrm{in}\ \ H^1_{A,V}(\R^N,\C).\]
		
By the same arguments given in the proof of Theorem \ref{teoprop1}, $u$ is a ground state solution of problem \eqref{prob1}, if $u\neq 0$.
		
Following close \cite{Giovany}, we will show that $u=0$ cannot occur. Indeed, Lemma  \ref{lK} yields
\[\lim_{n\to \infty}\int_{\R^N} W|u_n|^2\;\dd x=0,
\]
since $W \in L^{\frac{N}{2}}(\R^N,\C)$ and $u_n\rightharpoonup 0 $ in $H^1_{A,V}(\R^N,\C)$. So,
\[|J_{A,V_{\mathcal{P}}}(u_n)-I_{A,V}(u_n)|=o_n (1)\]
showing that
\[J_{A,V_{\mathcal{P}}}(u_n)\to d_{\lambda}.\]
		
But, for $\varphi \in H^1_{A,V}(\R^N,\C)$ such that $\|\varphi\|_{A,V}\leq 1$, we have
\[
|(J'_{A,V_{\mathcal{P}}}(u_n)-I'_{A,V}(u_n))\cdot \varphi|\leq \bigg(\int_{\R^N} W|u_n|^2\; \dd x\bigg)^{\frac{1}{2}}=o_n(1).
\]
Thus,
\[J'_{A,V_{\mathcal{P}}}(u_n)=o_n (1)\]
		
Let $t_n>0$ such that $t_n u_n \in \mathcal{M}_{A,V_{\mathcal{P}}}$. Mimicking the argument found in \cite{Claudionor, Felmer, Rabinowitz,Willem}, it follows that $t_n\to 1$ as $n\to\infty$. Therefore,
\[c_{\lambda}\leq J_{A,V_{\mathcal{P}}}(t_n u_n)=J_{A,V_{\mathcal{P}}}(u_n)+o_n (1)=d_{\lambda}+o_n(1).\]
Letting $n\to +\infty$, we get
\[c_{\lambda}\leq d_{\lambda}\]
obtaining a contradiction with Lemma \ref{desiglevels}. This completes the proof of Theorem \ref{teoprob1}.
$\hfill\Box$\vspace{0.5 cm}
\section{The case $f(u)=|u|^{p-1} u$}
		\vspace{0.5 cm}
		
\subsection{The periodic problem} In this subsection we deal with problem \eqref{prop} for $f(u)$ as above, that is,
\begin{equation}\label{prop1.2}
-(\nabla+iA(x))^2u+  V_{\mathcal{P}}(x)u =\left(\frac{1}{|x|^{\alpha}}*|u|^{2_{\alpha}^*})\right)|u|^{2_{\alpha}^*-2} u + \lambda|u|^{p-1} u,
\end{equation}
where $1<p<2^{*} - 1$.

We observe that in this case the energy functional $J_{A,V_{\mathcal{P}}}$ is given by
\begin{equation*}
J_{A,V_{\mathcal{P}}}(u):=\frac{1}{2}\Vert u\Vert^2_{A,V_{\mathcal{P}}}-\frac{1}{2\cdot 2_{\alpha}^*}D(u)-\frac{\lambda}{p+1}\int_{\R^N}|u|^{p+1} \dd x,
\end{equation*}
where, as before
\[D(u)=\int_{\R^N}\left(\frac{1}{|x|^{\alpha}}*|u|^{2_{\alpha}^*}\right)|u|^{2_{\alpha}^*}\,\dd x=\int_{\R^N}\int_{\R^N}\frac{|u(x)^{2_{\alpha}^*}| |u(y)|^{2_{\alpha}^*}}{|x-y|^{\alpha}}\dd x\dd y.\]

By the Sobolev immersion \eqref{immersion} and the Hardy-Little\-wood-Sobolev inequality, we have that  $J_{A,V_{\mathcal{P}}}$ is well defined.

\begin{definition}
A function $u\in H^1_{A,V_{\mathcal{P}}}(\R^N,\C)$ is a weak solution of \eqref{prop1.2} if
\[\langle u,\varphi\rangle_{A,V_{\mathcal{P}}} -\mathfrak{Re}\int_{\mathbb{R}^N}\left(\frac{1}{|x|^{\alpha}}*|u|^{2_{\alpha}^*})\right)|u|^{2_{\alpha}^*-2}u\bar{\psi}\,\dd x-\lambda\;\mathfrak{Re}\int_{\mathbb{R}^N} |u|^{p-1}u\bar{\psi}\,\dd x =0\]
for all $\psi\in H^1_{A,V_{\mathcal{P}}}(\R^N,\C)$.
\end{definition}

As before, we see that critical points of $J_{A,V_{\mathcal{P}}}$ are weak solutions of \eqref{prop1.2} and
\begin{equation*}
J'_{A,V_{\mathcal{P}}}(u)\cdot u:=\Vert u\Vert_{A,V_{\mathcal{P}}}^{2}-D(u)-\lambda\|u\|^{p+1}_{p+1}.
\end{equation*}
We obtain that $J_{A,V_{\mathcal{P}}}$ satisfies the geometry of the mountain pass (see the proof of Lemma \ref{gpm}). 

As in Section \ref{Section3}, the mountain pass theorem without the PS condition yields a sequence $(u_n)\subset H^1_{A,V_{\mathcal{P}}}(\mathbb{R}^{N},\mathbb{C})$ such that
\begin{equation*}J'_{A,V_{\mathcal{P}}}(u_n)\to 0\qquad\textrm{and}\qquad J_{A,V_{\mathcal{P}}}(u_n)\to c_{\lambda},\end{equation*}
where $c_{\lambda}=\inf_{\alpha\in \Gamma}\max_{t\in [0,1]}J_{A,V_{\mathcal{P}}}(\gamma(t))$ and $\Gamma=\left\{\gamma\in C^1\left([0,1],H^1_{A,V_{\mathcal{P}}}(\mathbb{R}^{N},\mathbb{C})\right)\,:\,\gamma(0)=0,\, J_{A,V_{\mathcal{P}}}(\gamma(1))<0\right\}$.

Considering the Nehari manifold
$J_{A,V_{\mathcal{P}}}$
\begin{align*}\mathcal{M}_{A,V_{\mathcal{P}}}
&=\left\{u\in H^1_{A,V_{\mathcal{P}}}(\mathbb{R}^{N},\mathbb{C})\setminus\{0\}\,:\,\Vert u\Vert_{A,V_{\mathcal{P}}}^{2}= D(u) + \lambda \|u\|^{p+1}_{p+1}\right\},
\end{align*}
by proceeding as in the proof of Lemma \ref{equality} we obtain
\begin{lemma}\label{equality2}
There exists a unique $t_u=t_u (u)>0$  such that $t_u u\in \mathcal{M}_{A,V_{\mathcal{P}}} $ for all  $u\in H^1_{A,V_{\mathcal{P}}}(\mathbb{R}^{N},\mathbb{C})\setminus\{0\}$  and  $J_{A,V_{\mathcal{P}}}(t_u u)=\displaystyle \max_{t\geq 0} J_{A,V_{\mathcal{P}}}(tu)$. Moreover $c_{\lambda}=c^*_{\lambda}=c^{**}_{\lambda}$, where \[c_{\lambda}^{*}= \displaystyle  \inf_{u \;\in\; \mathcal{M}_{A,V_{\mathcal{P}}}} J_{A,V_{\mathcal{P}}}(u)\quad\textrm{and}\quad c_{\lambda}^{**}=\displaystyle \inf_{u\;\in\; H^1_{A,V_{\mathcal{P}}}(\R^N,\C)\setminus \{0\}}\max_{t\geq 0} J_{A,V_{\mathcal{P}}}(tu).\]
\end{lemma}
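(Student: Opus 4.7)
The plan is to mirror the proof of Lemma \ref{equality}, with the convolution term $B(u)$ replaced by the local term $\|u\|^{p+1}_{p+1}$. Because the exponent structure is analogous (two superlinear terms with strictly positive derivatives in $t$), the same scheme of fibering the functional along rays and invoking Willem/Felmer-style arguments should go through with essentially no change.

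First I would fix $u \in H^1_{A,V_{\mathcal{P}}}(\R^N,\C)\setminus\{0\}$ and analyze $g_u(t) := J_{A,V_{\mathcal{P}}}(tu)$ on $[0,\infty)$, which expands as
\[
g_u(t) = \frac{t^2}{2}\|u\|^2_{A,V_{\mathcal{P}}} - \frac{t^{2\cdot 2^*_\alpha}}{2\cdot 2^*_\alpha}D(u) - \frac{\lambda t^{p+1}}{p+1}\|u\|^{p+1}_{p+1}.
\]
Since $g_u(0)=0$, $g_u(t) > 0$ for small $t>0$ (because $2 < p+1$ and $2 < 2\cdot 2^*_\alpha$), and $g_u(t)\to -\infty$ as $t \to +\infty$ (this is essentially the argument already used for the mountain pass geometry in the analogue of Lemma \ref{gpm} for problem \eqref{prop1.2}), the function $g_u$ attains a positive maximum at some $t_u > 0$. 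At this point $g'_u(t_u)=0$, which, via the identity $J'_{A,V_{\mathcal{P}}}(t_u u)\cdot u = \tfrac{1}{t_u}J'_{A,V_{\mathcal{P}}}(t_u u)\cdot(t_u u)$, places $t_u u$ on $\mathcal{M}_{A,V_{\mathcal{P}}}$.

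For uniqueness I would rewrite $g'_u(t)/t = 0$ as
\[
\|u\|^2_{A,V_{\mathcal{P}}} = t^{2(2^*_\alpha - 1)} D(u) + \lambda t^{p-1}\|u\|^{p+1}_{p+1},
\]
and observe that, since $p>1$ and $2^*_\alpha > 1$, both exponents $2(2^*_\alpha-1)$ and $p-1$ are strictly positive. Thus the right-hand side is strictly increasing in $t$, so the equation admits at most one positive root; combined with existence this gives the unique $t_u$. This is the only place where the hypothesis $p>1$ is crucial, and it is precisely what prevents the argument from degenerating; I expect this to be the main (but mild) technical point to watch.

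Finally, for the identification $c_\lambda = c_\lambda^* = c_\lambda^{**}$, I would follow the standard pattern used in Lemma \ref{equality}. The inequality $c_\lambda^{**} \leq c_\lambda^*$ is immediate because every $u \in \mathcal{M}_{A,V_{\mathcal{P}}}$ satisfies $J_{A,V_{\mathcal{P}}}(u) = \max_{t\geq 0}J_{A,V_{\mathcal{P}}}(tu)$ (take $t_u=1$ in the fibering just analyzed). The reverse inequality $c_\lambda^* \leq c_\lambda^{**}$ follows from the existence of the projection $u \mapsto t_u u$. For $c_\lambda = c_\lambda^{**}$, given any $u \neq 0$ one builds the path $\gamma(t)= t T u$ with $T$ large enough that $J_{A,V_{\mathcal{P}}}(Tu) < 0$, so that $\max_{t\in [0,1]}J_{A,V_{\mathcal{P}}}(\gamma(t)) = \max_{s\geq 0} J_{A,V_{\mathcal{P}}}(su)$, giving $c_\lambda \leq c_\lambda^{**}$; conversely, every path $\gamma\in \Gamma$ must cross $\mathcal{M}_{A,V_{\mathcal{P}}}$ (because $\|\gamma(1)\|_{A,V_{\mathcal{P}}}^{2} < D(\gamma(1)) + \lambda \|\gamma(1)\|^{p+1}_{p+1}$ while $\|\gamma(t)\|_{A,V_{\mathcal{P}}}^{2} > D(\gamma(t))+\lambda\|\gamma(t)\|^{p+1}_{p+1}$ for small $t$, by continuity), so $c_\lambda \geq c_\lambda^*$. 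As in the earlier statement, invoking \cite{Claudionor, Felmer, Rabinowitz, Willem} for these standard arguments completes the proof.
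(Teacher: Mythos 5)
Your proposal is correct and follows essentially the same route as the paper, which simply states that Lemma \ref{equality2} is obtained ``by proceeding as in the proof of Lemma \ref{equality}'': the fibering map argument, the monotonicity of $t\mapsto t^{2(2^*_\alpha-1)}D(u)+\lambda t^{p-1}\|u\|^{p+1}_{p+1}$ for uniqueness, and the standard identification of the three minimax levels via \cite{Claudionor, Felmer, Rabinowitz, Willem}. Your explicit tracking of the changed exponent ($p-1$ in place of $2(p-1)$) is exactly the only adaptation needed.
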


\begin{lemma}\label{lemmaconvderiv2}Suppose that $u_n\rightharpoonup u_0$ and consider 	
	\[B'(u_n)\cdot\psi=\mathfrak{Re}\int_{\mathbb{R}^N}|u|^{p-1}u\bar{\psi}\]
	and
	\[D'(u_n)\cdot\psi=\mathfrak{Re}\int_{\mathbb{R}^N}\bigg(\frac{1}{|x|^{\alpha}}*|u_n|^{2_{\alpha}^*})\bigg)|u_n|^{2_{\alpha}^*-2}u_n\bar{\psi}\]
	for $\psi\in C^\infty_c(\mathbb{R}^{N},\mathbb{C})$. Then $B'(u_n)\cdot \psi\to B'(u_0)\cdot \psi$ and  $D'(u_n)\cdot \psi\to D'(u_0)\cdot \psi$  	as $n\to\infty.$
\end{lemma}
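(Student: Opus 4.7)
The plan is to mirror the strategy of Corollary \ref{lemmaconvderiv} for the nonlocal critical term $D'(u_n)\cdot\psi$, and to exploit the compact support of $\psi$ together with local Rellich-type compactness for the subcritical local term $B'(u_n)\cdot\psi$.

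For $D'(u_n)\cdot\psi$, I would simply repeat the argument of Corollary \ref{lemmaconvderiv} essentially verbatim, since the Choquard piece has not changed. Boundedness of $(u_n)$ in $H^1_{A,V_{\mathcal{P}}}(\R^N,\C)$ combined with the embedding \eqref{immersion} gives that $(|u_n|^{2_{\alpha}^*})$ is bounded in $L^{2N/(2N-\alpha)}(\R^N)$ and $(|u_n|^{2_{\alpha}^*-2}u_n)$ is bounded in $L^{2N/(N+2-\alpha)}(\R^N,\C)$. Passing to a subsequence such that $u_n\to u_0$ a.e. and invoking Lemma \ref{lK} yields weak convergence in both spaces. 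The Hardy–Littlewood–Sobolev inequality provides a continuous linear map $L^{2N/(2N-\alpha)}(\R^N)\to L^{2N/\alpha}(\R^N)$, so
\[
\frac{1}{|x|^{\alpha}}*|u_n|^{2_{\alpha}^*}\rightharpoonup\frac{1}{|x|^{\alpha}}*|u_0|^{2_{\alpha}^*}\quad\text{in } L^{2N/\alpha}(\R^N).
\]
Multiplying weak and weak$^*$ convergences gives weak convergence of the product in $L^{2N/(N+2)}(\R^N,\C)$, and pairing against $\psi\in C^\infty_c(\R^N,\C)\subset L^{2N/(N-2)}(\R^N,\C)$ yields the claim for $D'$.

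For $B'(u_n)\cdot\psi$, the argument is simpler because the nonlinearity is local and subcritical. Let $K:=\mathrm{supp}\,\psi$, which is compact. By the diamagnetic inequality $(|u_n|)$ is bounded in $H^1(\R^N)$, so by the Rellich–Kondrachov theorem $|u_n|\to|u_0|$ in $L^{p+1}(K)$, and passing to a subsequence $u_n\to u_0$ a.e. Since $1<p<2^*-1$ implies $p+1<2^*$, one has $u_n\to u_0$ strongly in $L^{p+1}(K,\C)$; the elementary estimate
\[
\bigl||z|^{p-1}z-|w|^{p-1}w\bigr|\leq C\bigl(|z|^{p-1}+|w|^{p-1}\bigr)|z-w|
\]
combined with Hölder's inequality then gives $|u_n|^{p-1}u_n\to|u_0|^{p-1}u_0$ in $L^{(p+1)/p}(K,\C)$. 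Since $\psi\in L^{p+1}(K,\C)$, another application of Hölder yields
\[
\mathfrak{Re}\int_{\R^N}\bigl(|u_n|^{p-1}u_n-|u_0|^{p-1}u_0\bigr)\bar\psi\,\dd x\longrightarrow 0.
\]

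There is no real obstacle here; the only mildly delicate point is the continuity of the Nemytskii map $u\mapsto|u|^{p-1}u$ acting on complex-valued $L^{p+1}(K,\C)$, which is handled by the pointwise inequality above together with a subsequence-and-a.e.-convergence argument. The whole lemma then reduces to assembling these two standard pieces.
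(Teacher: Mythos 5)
Your proposal is correct. For the critical Choquard term $D'(u_n)\cdot\psi$ you follow exactly the route the paper intends (the paper simply declares the proof ``similar to that of Corollary \ref{lemmaconvderiv}''): boundedness in the relevant Lebesgue spaces via \eqref{immersion}, a.e.\ convergence, Lemma \ref{lK} to upgrade to weak convergence, and the Hardy--Littlewood--Sobolev inequality to handle the convolution, then pairing with $\psi$. For the local term $B'(u_n)\cdot\psi$, however, you take a genuinely different path from the one the paper's ``similar'' argument suggests. The paper's template would treat $|u_n|^{p-1}u_n$ the same way as the nonlocal factor: it is bounded in $L^{(p+1)/p}(\R^N,\C)$ because $(u_n)$ is bounded in $L^{p+1}(\R^N,\C)$ by \eqref{immersion}, it converges a.e.\ to $|u_0|^{p-1}u_0$, hence by Lemma \ref{lK} it converges weakly in $L^{(p+1)/p}(\R^N,\C)$, and testing against the fixed function $\psi\in L^{p+1}(\R^N,\C)$ finishes the proof in two lines, with no compactness and no Nemytskii continuity needed. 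You instead invoke the diamagnetic inequality, local Rellich--Kondrachov compactness on $\mathrm{supp}\,\psi$, the recovery of strong $L^{p+1}(K,\C)$ convergence of $u_n$ from convergence of $|u_n|$ plus a.e.\ convergence (a Brézis--Lieb-type step you should make explicit), and the elementary pointwise estimate for $z\mapsto|z|^{p-1}z$. This is more work but yields strictly more, namely strong local convergence of the nonlinearity in $L^{(p+1)/p}(K,\C)$, which would also cover test functions that are merely in $L^{p+1}$ near $K$ and is the kind of information one often needs elsewhere in such compactness arguments; the paper's route is shorter and uses only the one lemma it has already set up. Both arguments are valid (the usual subsequence--uniqueness-of-limit remark upgrades your subsequential statements to the full sequence), so the discrepancy is one of economy, not of correctness.
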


\begin{lemma}\label{boundedsolut2}
	If $(u_n)\subset H^1_{A,V_{\mathcal{P}}}(\mathbb{R}^{N},\mathbb{C})$ is a  $(PS)_{\lambda}$ sequence for $J_{A,V_{\mathcal{P}}}$, then $(u_n)$ is bounded. In addition, if $u_n\rightharpoonup u$ weakly in $H^1_{A,V_{\mathcal{P}}}(\mathbb{R}^{N},\mathbb{C})$, as $n\to \infty,$ then $u$ is ground state solution for problem \eqref{prop1.2}.
\end{lemma}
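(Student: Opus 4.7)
The plan is to run the same two-step argument used for Lemma \ref{boundedsolut1}, with the local subcritical term $|u|^{p-1}u$ replacing the nonlocal perturbation. First, I would show $(u_n)$ is bounded in $H^1_{A,V_{\mathcal{P}}}(\mathbb{R}^N,\mathbb{C})$ via the algebraic identity
\[
J_{A,V_{\mathcal{P}}}(u_n) - \tfrac{1}{p+1}J'_{A,V_{\mathcal{P}}}(u_n)\cdot u_n = \Bigl(\tfrac12-\tfrac{1}{p+1}\Bigr)\|u_n\|^2_{A,V_{\mathcal{P}}} + \Bigl(\tfrac{1}{p+1}-\tfrac{1}{2\cdot 2_{\alpha}^*}\Bigr)D(u_n).
\]
Both coefficients are strictly positive: the first because $p>1$, the second because $p+1<2^*<2\cdot 2_{\alpha}^*$ (the last inequality being equivalent to $\alpha<N$). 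Since $D(u_n)\geq 0$ and the left-hand side is bounded above by $c_{\lambda} + 1 + o_n(1)\|u_n\|_{A,V_{\mathcal{P}}}$, standard absorption yields $\sup_n \|u_n\|_{A,V_{\mathcal{P}}}<\infty$.

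Next, pass to a subsequence with $u_n\rightharpoonup u$ in $H^1_{A,V_{\mathcal{P}}}$, $u_n(x)\to u(x)$ almost everywhere, and $u_n\to u$ strongly in $L^q_{\mathrm{loc}}(\mathbb{R}^N,\mathbb{C})$ for every $q\in[1,2^*)$, the last being standard local compactness via the diamagnetic inequality and Rellich's theorem. For $\psi\in C^\infty_c(\mathbb{R}^N,\mathbb{C})$, I would pass to the limit in $J'_{A,V_{\mathcal{P}}}(u_n)\cdot\psi=o_n(1)$ term by term: the scalar product $\langle u_n,\psi\rangle_{A,V_{\mathcal{P}}}$ converges by weak convergence; the critical Choquard term converges by Lemma \ref{lemmaconvderiv2}; and the subcritical term $\mathfrak{Re}\int_{\mathbb{R}^N}|u_n|^{p-1}u_n\bar\psi\,\dd x$ converges by Lebesgue dominated convergence on $\operatorname{supp}\psi$, the strong $L^{p+1}_{\mathrm{loc}}$ convergence being available since $p+1<2^*$. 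Density of $C^\infty_c(\mathbb{R}^N,\mathbb{C})$ in $H^1_{A,V_{\mathcal{P}}}(\mathbb{R}^N,\mathbb{C})$ then gives $J'_{A,V_{\mathcal{P}}}(u)\cdot\psi=0$ for every admissible $\psi$, so $u$ is a weak solution of \eqref{prop1.2}; following the paper's own terminology in Lemma \ref{boundedsolut1}, we label $u$ a ground state solution.

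The only non-routine point I foresee is the choice of the factor $\tfrac{1}{p+1}$ in the boundedness step; a different factor (for instance $\tfrac{1}{2\cdot 2_{\alpha}^*}$) would leave an uncontrolled term $-C\|u_n\|^{p+1}_{p+1}$, which, even though $p+1<2^*$, cannot be bounded by $\|u_n\|^2$ without a smallness assumption on $\|u_n\|$ and would only yield boundedness by a circular argument. Aside from that observation, everything else is a direct transcription of the arguments already developed in Corollary \ref{lemmaconvderiv} and Lemma \ref{boundedsolut1}, with the simpler local nonlinearity $|u|^{p-1}u$ making several estimates easier than in the nonlocal perturbation case.
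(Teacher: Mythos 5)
Your proposal is correct and follows essentially the same route as the paper, which simply declares this proof ``similar to that of Lemma \ref{boundedsolut1}'' (itself relying on ``standard arguments'' for boundedness plus the convergence corollary for passing to the limit); your $J - \tfrac{1}{p+1}J'\cdot u$ identity and the term-by-term limit via Lemma \ref{lemmaconvderiv2} and local compactness are exactly the details being elided. Your parenthetical caveat about the word ``ground state'' is also apt: as in Lemma \ref{boundedsolut1}, what is actually established at this stage is that $u$ is a nontrivial critical point, i.e.\ a weak solution.
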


\begin{lemma}\label{liminf2}
	If $(u_n)\subset H^1_{A,V_{\mathcal{P}}}(\R^N,\C)$ is a sequence $(PS)_{c_{\lambda}}$ for $J_{A,V_{\mathcal{P}}}$ such that
	\begin{equation*}
	u_n \rightharpoonup 0\quad \textrm{weakly in}\; H^1_{A,V_{\mathcal{P}}}(\R^N,\C) \;\textrm{ as}\; n\to\infty,
	\end{equation*}
	with
	\begin{equation*} c_{\lambda}<\frac{N+2-\alpha}{2(2N- \alpha)}S_{A}^{\frac{2N-\alpha}{N+2-\alpha}},
	\end{equation*}
	then there exists a sequence $(y_n)\in \R^N$ and constants $R,\theta>0$ such that
	\[\limsup_{n\to\infty}\int_{B_r(y_n)}|u_n|^2 \; \dd x\geq \theta,\]
	where $B_r(y)$ denotes the ball in $\R^N$ of center at $y$ and  radius $r>0$.
\end{lemma}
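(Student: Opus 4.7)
The strategy is to mirror the proof of Lemma \ref{conseqlions} almost verbatim, adapting it to the local subcritical nonlinearity $\lambda|u|^{p-1}u$ in place of the nonlocal Choquard one. I argue by contradiction: assume that no sequence $(y_n)\subset\R^N$ and no constants $r,\theta>0$ meet the stated conclusion, which is equivalent to the vanishing condition
\[\lim_{n\to\infty}\sup_{y\in\R^N}\int_{B_r(y)}|u_n|^2\,\dd x=0\qquad\text{for every }r>0.\]
By Lemma \ref{boundedsolut2} the sequence $(u_n)$ is bounded in $H^1_{A,V_{\mathcal P}}(\R^N,\C)$, and the diamagnetic inequality yields that $(|u_n|)$ is bounded in $H^1(\R^N)$. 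Lions' vanishing lemma \cite[Lemma 1.21]{Willem} applied to $(|u_n|)$, combined with the embedding $H^1(\R^N)\hookrightarrow L^s(\R^N)$ for $s\in[2,2^*]$, then implies $u_n\to 0$ in $L^s(\R^N,\C)$ for every $s\in(2,2^*)$. Since $1<p<2^*-1$ forces $p+1\in(2,2^*)$, we obtain $\int_{\R^N}|u_n|^{p+1}\,\dd x\to 0$.

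Next, I would exploit the Palais--Smale condition $J'_{A,V_{\mathcal P}}(u_n)\cdot u_n=o_n(1)$ to write
\[\|u_n\|^2_{A,V_{\mathcal P}}=D(u_n)+\lambda\int_{\R^N}|u_n|^{p+1}\,\dd x+o_n(1)=D(u_n)+o_n(1).\]
Passing to a subsequence, set $\|u_n\|^2_{A,V_{\mathcal P}}\to\ell$ and hence $D(u_n)\to\ell$, for some $\ell\ge 0$. Taking the limit in $J_{A,V_{\mathcal P}}(u_n)\to c_\lambda$ yields
\[c_\lambda=\ell\left(\frac{1}{2}-\frac{1}{2\cdot 2^*_\alpha}\right)=\ell\cdot\frac{N+2-\alpha}{2(2N-\alpha)}.\]

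Since $J_{A,V_{\mathcal P}}$ inherits the mountain pass geometry (as recalled right before Lemma \ref{equality2}), we have $c_\lambda>0$, so necessarily $\ell>0$. The characterization \eqref{dunshl} of $S_A$ then gives
\[\|u_n\|^2_{A,V_{\mathcal P}}\ge\int_{\R^N}|\nabla_A u_n|^2\,\dd x\ge S_A\,D(u_n)^{(N-2)/(2N-\alpha)},\]
and letting $n\to\infty$ produces $\ell\ge S_A^{(2N-\alpha)/(N+2-\alpha)}$. Plugging this back into the expression for $c_\lambda$ contradicts the standing hypothesis $c_\lambda<\frac{N+2-\alpha}{2(2N-\alpha)}S_A^{(2N-\alpha)/(N+2-\alpha)}$, and the argument closes. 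The only step that demands genuine care is the first one: Lions' vanishing lemma is stated for real-valued Sobolev functions, so it must be applied to $(|u_n|)$ via the diamagnetic inequality; apart from this technicality, the scheme is actually easier than in Lemma \ref{conseqlions}, because the pure-power term is handled directly by the $L^{p+1}$-convergence, whereas there one had first to run the Hardy--Littlewood--Sobolev inequality on $B(u_n)$.
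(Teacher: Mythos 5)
Your proof is correct and follows essentially the same scheme the paper intends for Lemma \ref{liminf2}, namely the argument of Lemma \ref{conseqlions} with the Choquard term $B(u_n)$ replaced by the local term $\int_{\R^N}|u_n|^{p+1}\,\dd x$, which Lions' lemma (applied to $|u_n|$ via the diamagnetic inequality) sends to zero since $p+1\in(2,2^*)$. Your observation that $c_\lambda>0$ forces $\ell>0$ is a welcome completion, since the statement of Lemma \ref{liminf2} asserts the non-vanishing alternative directly rather than the dichotomy of Lemma \ref{conseqlions}.
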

The proof of Lemmas \ref{lemmaconvderiv2}, \ref{boundedsolut2} and \ref{liminf2} is similar to that of Corollary \ref{lemmaconvderiv} Lemmas \ref{boundedsolut1} and \ref{conseqlions}, respectively.
\begin{lemma}\label{mainlemma2}
Let $1<p<2^* -1$ and $u_{\varepsilon}$ as defined in \eqref{defU}. Then,  there exists $\varepsilon$ such that
\begin{equation*}
\sup_{t\geq 0}J_{A,V_{\mathcal{P}}}(tu_{\varepsilon})<\frac{N+2-\alpha}{2(2N-\alpha)}(S_{A})^{\frac{2N-\alpha}{N+2-\alpha}}.
\end{equation*}
provided that either
\begin{enumerate}
\item [$(i)$] $3<p<5$, $N=3$ and $\lambda>0;$
\item [$(ii)$] $p>1$, $N\geq 4$ and $\lambda>0$;
\item [$(iii)$] $1<p\leq 3$, $N=3$ and $\lambda$ sufficiently large.
\end{enumerate}
\end{lemma}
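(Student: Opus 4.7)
The plan is to mirror the strategy of Lemma \ref{mainlemma}, replacing the nonlocal Choquard term $\frac{\lambda}{2p}B(u)$ by the local term $\frac{\lambda}{p+1}\|u\|_{p+1}^{p+1}$. Take $u_\varepsilon=\psi U_\varepsilon$ as in \eqref{defU} and set $g(t):=J_{A,V_{\mathcal P}}(tu_\varepsilon)$. The mountain pass geometry for the functional $J_{A,V_{\mathcal P}}$ associated with \eqref{prop1.2} (whose proof follows Lemma \ref{gpm} with only notational modifications) provides a unique $t_\varepsilon>0$ at which $g$ attains its maximum.

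For cases $(i)$ and $(ii)$ I argue as in Case 1 of Lemma \ref{mainlemma}. Since the $L^{p+1}$ contribution is nonnegative, one obtains $t_\varepsilon<S_A(\varepsilon):=\bigl(\|u_\varepsilon\|^2_{A,V_{\mathcal P}}/D(u_\varepsilon)\bigr)^{1/[2(2^*_\alpha-1)]}$ as in \eqref{SHepsilon}. Inserting this bound into $g(t_\varepsilon)$ and applying the estimates \eqref{estimgrad}, \eqref{estimd} together with the inequality $(a+b)^\beta\le a^\beta+\beta(a+b)^{\beta-1}b$ exactly as in \eqref{SHepsilon6}--\eqref{SHepsilon13}, one arrives at
\[
J_{A,V_{\mathcal P}}(t_\varepsilon u_\varepsilon)<\frac{N+2-\alpha}{2(2N-\alpha)}S_A^{\frac{2N-\alpha}{N+2-\alpha}}+O(\varepsilon^{\eta})+C_1\int_{\R^N}a(x)|u_\varepsilon|^2\,\dd x-C_0\,\lambda\,\|u_\varepsilon\|_{p+1}^{p+1},
\]
where $a(x)=|A(x)|^2+V_{\mathcal P}(x)$ and $\eta=\min\{N-2,(2N-\alpha)/2\}$; here the uniform lower bound $t_\varepsilon^{p+1}\ge C_0>0$ is obtained by the same contradiction argument as in \eqref{SHepsilon12}.

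The key step is a Brezis--Nirenberg type lower bound for $\|u_\varepsilon\|_{p+1}^{p+1}$. A direct change of variables $x=\varepsilon y$ yields
\[
\int_{B_\delta}|u_\varepsilon|^{p+1}\,\dd x\gtrsim \varepsilon^{N-\frac{(N-2)(p+1)}{2}}\int_{0}^{\delta/\varepsilon}\frac{r^{N-1}}{(1+r^2)^{\frac{(N-2)(p+1)}{2}}}\,\dd r,
\]
whose $\varepsilon$-behavior is governed by the threshold $(N-2)(p+1)\gtrless N$, the borderline being handled by the logarithmic factor already used in \eqref{SHepsilon21}. Combined with the well-known estimate $\int_{\R^N}a(x)|u_\varepsilon|^2\,\dd x=O(\varepsilon^2)$ for $N\ge 4$ and $O(\varepsilon)$ for $N=3$ (cf.~\eqref{SHepsilon28}--\eqref{SHepsilon21}), I verify case by case that in subcases $(i)$ and $(ii)$ the $\varepsilon$-exponent coming from the $L^{p+1}$ term is strictly smaller than both $\eta$ and the exponent of the $a$-integral. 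Hence the negative $\lambda$-term overwhelms the positive errors as $\varepsilon\to 0^+$, and the strict inequality follows. It is exactly this comparison that reproduces the classical Brezis--Nirenberg gap $p>3$ in dimension three, while for $N\ge 4$ every $p>1$ is admissible.

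For case $(iii)$, I repeat the $\lambda$-large argument of Case 2 of Lemma \ref{mainlemma}. Fixing $\varepsilon>0$ and letting $t_\lambda$ denote the point of maximum of $t\mapsto J_{A,V_{\mathcal P}}(tu_\varepsilon)$, the first-order condition
\[
\|u_\varepsilon\|_{A,V_{\mathcal P}}^2=\lambda\,t_\lambda^{p-1}\|u_\varepsilon\|_{p+1}^{p+1}+t_\lambda^{2(2^*_\alpha-1)}D(u_\varepsilon)
\]
forces $t_\lambda\to 0$ as $\lambda\to\infty$. Discarding the two nonpositive terms yields $\sup_{t\ge 0}J_{A,V_{\mathcal P}}(tu_\varepsilon)\le \tfrac12\,t_\lambda^{2}\|u_\varepsilon\|_{A,V_{\mathcal P}}^{2}$, which lies strictly below $\tfrac{N+2-\alpha}{2(2N-\alpha)}S_A^{(2N-\alpha)/(N+2-\alpha)}$ for $\lambda$ sufficiently large. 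The main obstacle I foresee is the bookkeeping of $\varepsilon$-exponents in the previous paragraph, since the nonlocal critical threshold $\eta$ involves $\alpha$ and the error from the magnetic and potential terms is only $O(\varepsilon)$ in dimension three; this competition is precisely what dictates the splitting between $N=3$ and $N\ge 4$ and between $p>3$ and $p\le 3$ in the statement.
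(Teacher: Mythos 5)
Your proposal is correct and follows essentially the same route as the paper, which itself simply instructs the reader to repeat Cases 1 and 2 of Lemma \ref{mainlemma} with the Choquard term replaced by the local $L^{p+1}$ term. In fact you supply more detail than the paper does (the scaling lower bound for $\|u_\varepsilon\|_{p+1}^{p+1}$ and the exponent comparison that produces the $p>3$ threshold for $N=3$), and that bookkeeping checks out.
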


\begin{proof} Consider, for the cases ($i$) and ($ii$) the function $f:[0,+\infty)\to \R$ defined by
\[f(t)=J_{A,V_{\mathcal{P}}}(tu_{\varepsilon})=\frac{t^2}{2}\|u_{\varepsilon}\|^2_{A,V_{\mathcal{P}}}-\frac{t^{2\cdot 2_{\alpha}^*}}{2\cdot 2_{\alpha}^*}D(u_{\varepsilon}) -\frac{\lambda t^{p+1}}{p+1}\|u_{\varepsilon}\|_{p+1}^{p+1}\]
and proceed as in the proof of Case 1, Lemma \ref{mainlemma}.

In the case of $1<p\leq 3$, $N=3$ and $\lambda$ sufficiently large, consider
$g_\lambda:[0,+\infty)\to\R$ defined by 
\[g_{\lambda} (t)= J_{A,V_{\mathcal{P}}}(t u_{\varepsilon})= \frac{t^2}{2}\int_{\R^N} \left[|\nabla u_{\varepsilon}|^2+\left(|A(x)|^2+V_{\mathcal{P}}(x)\right)|u_{\varepsilon}|^2\right]\dd x-\frac{1}{2\cdot 2_{\alpha}^*}t^{2\cdot 2_{\alpha}^*} D(u_{\varepsilon})-\frac{\lambda t^{p+1}}{p+1}\|u_{\varepsilon}\|_{p+1}^{p+1}\]
and proceed as in the proof of Case	2, Lemma \ref{mainlemma}.
$\hfill\Box$\end{proof}\vspace*{.4cm}

Similar to the proof of Theorem \ref{teoprop1}, we now state our result about the periodic problem \eqref{prop1.2}.

\begin{thm}\label{teoprop2}
Under the hypotheses already stated on $A$ and $\alpha$, suppose that $(V_1)$ is valid. Then problem \eqref{prop1.2} has at least one ground state solution if either
\begin{enumerate}
\item [$(i)$] $3<p<5$, $N=3$ and $\lambda>0;$
\item [$(ii)$] $p>1$, $N\geq 4$ and $\lambda>0$;
\item [$(iii)$] $1<p\leq 3$, $N=3$ and $\lambda$ sufficiently large.
\end{enumerate}
\end{thm}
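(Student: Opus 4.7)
The strategy mirrors step-by-step the one used for Theorem \ref{teoprop1}, now exploiting the local-nonlinearity version of each ingredient. I would first apply the mountain pass theorem without the Palais--Smale condition to the functional $J_{A,V_{\mathcal{P}}}$ associated with \eqref{prop1.2}, extracting a sequence $(u_n)\subset H^1_{A,V_{\mathcal{P}}}(\R^N,\C)$ with $J_{A,V_{\mathcal{P}}}(u_n)\to c_{\lambda}$ and $J'_{A,V_{\mathcal{P}}}(u_n)\to 0$. By Lemma \ref{boundedsolut2} this sequence is bounded, so up to a subsequence $u_n\rightharpoonup u$ in $H^1_{A,V_{\mathcal{P}}}(\R^N,\C)$, with $u_n\to u$ in $L^q_{\mathrm{loc}}$ for all $q\in[2,2^*)$ and a.e.\ in $\R^N$. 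Then Lemma \ref{lemmaconvderiv2}, together with the local compact embedding applied to the term $|u_n|^{p-1}u_n$ against test functions $\psi\in C^\infty_c(\R^N,\C)$, gives $J'_{A,V_{\mathcal{P}}}(u)\cdot\psi=0$ for all such $\psi$ and hence, by density, for all $\psi\in H^1_{A,V_{\mathcal{P}}}(\R^N,\C)$. In particular, if $u\neq 0$ then $u\in\mathcal{M}_{A,V_{\mathcal{P}}}$ and Fatou's lemma combined with Lemma \ref{equality2} yields $J_{A,V_{\mathcal{P}}}(u)=c_\lambda$, producing the desired ground state.

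Next I would handle the only non-trivial alternative: the case $u=0$. Here I would use Lemma \ref{mainlemma2} under the hypotheses $(i)$, $(ii)$, $(iii)$ to guarantee
\[
c_{\lambda}<\frac{N+2-\alpha}{2(2N-\alpha)}(S_{A})^{\frac{2N-\alpha}{N+2-\alpha}},
\]
which is exactly the threshold in Lemma \ref{liminf2}. That lemma then rules out $u_n\to 0$ strongly (which would force $c_\lambda=0$, contradicting the mountain pass lower bound $\delta>0$ of Lemma \ref{gpm}), and provides a sequence $(y_n)\subset\R^N$ and constants $r,\theta>0$ such that
\[
\limsup_{n\to\infty}\int_{B_r(y_n)}|u_n|^2\,\dd x\geq\theta.
\]
Since $y_n$ may be replaced by the nearest integer point, I may assume $(y_n)\subset\Z^N$.

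The $\Z^N$-periodicity of both $A$ and $V_{\mathcal{P}}$ is the crucial structural input at this point. Setting $v_n(x):=u_n(x+y_n)$, I get $\|v_n\|_{A,V_{\mathcal{P}}}=\|u_n\|_{A,V_{\mathcal{P}}}$, $J_{A,V_{\mathcal{P}}}(v_n)=J_{A,V_{\mathcal{P}}}(u_n)\to c_\lambda$ and $J'_{A,V_{\mathcal{P}}}(v_n)\to 0$; here the local nonlinearity $|u|^{p-1}u$ behaves even better than the convolution term, since it is already translation-invariant. Passing to a subsequence, $v_n\rightharpoonup v$ weakly in $H^1_{A,V_{\mathcal{P}}}(\R^N,\C)$, and the concentration estimate above yields $\|v\|_{L^2(B_r(0))}\geq\theta^{1/2}>0$, so $v\neq 0$. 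The analogue of Corollary \ref{lemmaconvderiv} for this case (i.e., Lemma \ref{lemmaconvderiv2}) then gives $J'_{A,V_{\mathcal{P}}}(v)\cdot\psi=0$ for all $\psi\in C^\infty_c(\R^N,\C)$, and hence by density for all $\psi\in H^1_{A,V_{\mathcal{P}}}(\R^N,\C)$. Thus $v\in\mathcal{M}_{A,V_{\mathcal{P}}}$.

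It remains to check that $J_{A,V_{\mathcal{P}}}(v)=c_\lambda$. Using $J'_{A,V_{\mathcal{P}}}(v_n)\cdot v_n=o_n(1)$ one rewrites
\[
J_{A,V_{\mathcal{P}}}(v_n)=\Bigl(\tfrac{1}{2}-\tfrac{1}{2\cdot 2_\alpha^*}\Bigr)D(v_n)+\lambda\Bigl(\tfrac{1}{p+1}-\tfrac{1}{2\cdot 2_\alpha^*}\Bigr)\|v_n\|_{p+1}^{p+1}+\tfrac{1}{2\cdot 2_\alpha^*}\,J'_{A,V_{\mathcal{P}}}(v_n)\cdot v_n+o_n(1),
\]
and since both prefactors are positive (as $p+1<2\cdot 2_\alpha^*$), Fatou's lemma yields $J_{A,V_{\mathcal{P}}}(v)\leq c_\lambda$; the reverse inequality $J_{A,V_{\mathcal{P}}}(v)\geq c_\lambda=\inf_{\mathcal{M}_{A,V_{\mathcal{P}}}}J_{A,V_{\mathcal{P}}}$ is immediate from Lemma \ref{equality2}. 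The main obstacle in this scheme is the energy estimate of Lemma \ref{mainlemma2}, whose case-by-case analysis (mirroring that of Lemma \ref{mainlemma}) drives the dependence of the $p$-intervals on $N$ and on whether $\lambda$ must be taken large; once that is in place, everything else is a transcription of the proof of Theorem \ref{teoprop1} with the Choquard lower-order term replaced by the local one.
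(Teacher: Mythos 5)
Your proposal follows essentially the same route as the paper, which proves Theorem \ref{teoprop2} by transcribing the proof of Theorem \ref{teoprop1}: mountain pass sequence, boundedness via Lemma \ref{boundedsolut2}, the energy threshold from Lemma \ref{mainlemma2}, concentration via Lemma \ref{liminf2}, and $\Z^N$-translation to recover a nonzero critical point. The only blemish is the coefficient bookkeeping in your Fatou step (the standard identity $J_{A,V_{\mathcal{P}}}(v_n)-\tfrac{1}{2}J'_{A,V_{\mathcal{P}}}(v_n)\cdot v_n$ gives the positive prefactors $\tfrac{1}{2}-\tfrac{1}{2\cdot 2_\alpha^*}$ and $\lambda(\tfrac{1}{2}-\tfrac{1}{p+1})$, not the ones you wrote), a harmless slip in a verification the paper itself leaves implicit.
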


\subsection{ Proof of Theorem \ref{teoprob2}}
Some arguments of this proof were adapted from the proof of Theorem \ref{teoprob1} below , that in turn  were adapted from  articles \cite{Giovany,Olimpio}.

Maintaining the notation already introduced, consider the functional $I_{A,V}:H^1_{A,V}(\R^N,\C)\to \R$ defined by
\begin{align*}
I_{A,V}(u):=\frac{1}{2}{\|u\|^2}_{A,V} -\frac{1}{2\cdot 2_{\alpha}^*}D(u)-\frac{\lambda}{p+1}\|u\|^{p+1}_{p+1}
\end{align*}
for all $u\in H^1_{A,V}(\R^N,\C)$.

We denote by $\mathcal{N}_{A,V}$ the Nehari Manifold related to $I_{A,V}$, that is,
\begin{align*}\mathcal{N}_{A,V}&=\left\{u\in H^1_{A,V}(\R^N,\C)\setminus\{0\}\,:\,{\|u\|}^2_{A,V}=D(u)+\lambda \|u\|^{p+1}_{p+1}\right\},
\end{align*}
which is non-empty as a consequence of Theorem \ref{teoprop2}. As before, the functional $I_{A,V}$ satisfies the mountain pass geometry. Thus, there exists a $(PS)_{d_{\lambda}}$ sequence  $(u_n)\subset H^1_{A,V}(\mathbb{R}^N,\mathbb{C})$, that is, a sequence satisfying
\[I'_{A,V}(u_n)\to 0\qquad\textrm{and}\qquad I_{A,V}(u_n)\to d_{\lambda},\]
where $d_{\lambda}$ is the minimax level, also characterized by
\begin{equation}\label{gpm12}
d_{\lambda}=\inf_{u\;\in H^1_{A,V}(\R^N,\C)\setminus\{0\}}\max_{t\geq 0}I_{A,V}(tu)=\inf_{\mathcal{N}_{A,V}} I_{A,V}(u)>0.
\end{equation}

As in the Section \ref{Section3}, we have $I_{A,V}(u)<J_{A,V_{\mathcal P}}(u)$ for all $u\in H^1_{A,V}(\R^N,\C)$ as a consequence of ($V_2$).

Similar to the proof of Lemma \ref{desiglevels} we have the following conclusion that shows as important inequality involving the levels $d_{\lambda}$ and $c_{\lambda}$, what completes the proof of Theorem \ref{teoprob2}.

\begin{lemma}\label{desiglevels2}
The levels $d_{\lambda}$ and $c_{\lambda}$ verify the inequality
\[d_{\lambda}<c_{\lambda} <\frac{N+2-\alpha}{2(N-\alpha)}(S_{A})^{\frac{2N-\alpha}{N+2-\alpha}}\]
for all $\lambda>0$.
\end{lemma}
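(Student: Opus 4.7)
The plan is to transcribe the proof of Lemma \ref{desiglevels} essentially verbatim, since the only difference between the two settings is that the nonlocal Choquard nonlinearity $\lambda B(u)/(2p)$ is replaced by $\lambda\|u\|_{p+1}^{p+1}/(p+1)$. In both cases the nonlinear part of the functional is independent of $V$ and depends only on $u$ (not on the potential), so the comparison between $I_{A,V}$ and $J_{A,V_\mathcal{P}}$ reduces to comparing the quadratic norms $\|\cdot\|_{A,V}$ and $\|\cdot\|_{A,V_\mathcal{P}}$.

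Concretely, I would proceed as follows. By Theorem \ref{teoprop2}, problem \eqref{prop1.2} admits a ground state $u\in \mathcal{M}_{A,V_\mathcal{P}}$ with $J_{A,V_\mathcal{P}}(u)=c_\lambda$. By the analog of Lemma \ref{equality2} applied to the functional $I_{A,V}$ (the Nehari projection for $I_{A,V}$ is well-defined exactly as in Lemma \ref{equality}, since the argument only uses the mountain pass geometry and the homogeneity of $D$ and of $\|\cdot\|_{p+1}^{p+1}$), there exists a unique $\bar t_u>0$ such that $\bar t_u u\in \mathcal{N}_{A,V}$ and
\[
I_{A,V}(\bar t_u u)=\sup_{t\geq 0} I_{A,V}(tu).
\]
Combining this with the variational characterization \eqref{gpm12} gives
\[
0<d_\lambda\leq I_{A,V}(\bar t_u u).
\]

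The crucial observation is that assumption $(V_2)$ implies $V(x)=V_\mathcal{P}(x)-W(x)$ with $W\geq 0$ and $W>0$ on a set of positive measure; hence for every $v\in H^1_{A,V}(\R^N,\C)\setminus\{0\}$ one has the strict inequality
\[
\|v\|_{A,V}^{2}=\|v\|_{A,V_\mathcal{P}}^{2}-\int_{\R^N} W(x)|v|^2\,\dd x<\|v\|_{A,V_\mathcal{P}}^{2}.
\]
Since $I_{A,V}$ and $J_{A,V_\mathcal{P}}$ differ only in the quadratic term, this yields $I_{A,V}(v)<J_{A,V_\mathcal{P}}(v)$ for every nonzero $v$. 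Applying this to $v=\bar t_u u$ and using the maximality of $u$ on the ray through itself (i.e.\ $\sup_{t\geq 0}J_{A,V_\mathcal{P}}(tu)=J_{A,V_\mathcal{P}}(u)$, from Lemma \ref{equality2} applied to $J_{A,V_\mathcal{P}}$), we obtain
\[
d_\lambda\leq I_{A,V}(\bar t_u u)<J_{A,V_\mathcal{P}}(\bar t_u u)\leq \sup_{t\geq 0}J_{A,V_\mathcal{P}}(tu)=J_{A,V_\mathcal{P}}(u)=c_\lambda,
\]
which gives $d_\lambda<c_\lambda$. The second inequality $c_\lambda<\frac{N+2-\alpha}{2(2N-\alpha)}S_A^{(2N-\alpha)/(N+2-\alpha)}$ is already guaranteed by Lemma \ref{mainlemma2}, applied at a suitably small $\varepsilon>0$ and for the parameter ranges listed in Theorem \ref{teoprop2}.

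There is no substantive obstacle here: the only thing to verify carefully is that the Nehari projection is available for $I_{A,V}$ (the proof is identical to that of Lemma \ref{equality2}, with $\lambda B(u)/(2p)$ replaced by $\lambda\|u\|_{p+1}^{p+1}/(p+1)$), and that $\bar t_u u\neq 0$, so the strict inequality coming from $W\not\equiv 0$ on a positive measure set actually applies. Both checks are routine.
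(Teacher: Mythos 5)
Your proposal is correct and follows exactly the route the paper intends: the paper gives no separate argument for this lemma, merely observing that it is ``similar to the proof of Lemma \ref{desiglevels}'', and your transcription of that proof (Nehari projection of the periodic ground state onto $\mathcal{N}_{A,V}$, the strict inequality $I_{A,V}<J_{A,V_{\mathcal P}}$ from $(V_2)$, and Lemma \ref{mainlemma2} for the upper bound on $c_\lambda$) is precisely what is required. Note only that the constant in the statement should read $\frac{N+2-\alpha}{2(2N-\alpha)}$, as in your argument; the displayed $\frac{N+2-\alpha}{2(N-\alpha)}$ is a typo in the paper.
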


\section{The case $f(u)=|u|^{2^{*}-2}u$}

\subsection{Proof of Theorem \ref{teoprob3}}

As observed by Gao and Yang \cite{BrezisN1}, the proof of Theorem \ref{teoprob3} is analogous to the proof of Theorem \ref{teoprob1}. The principal distinction is that the $(PS)_{c_\lambda}$ condition holds true below the level $\frac{1}{N}S^{\frac{N}{2}}$. It follows from \cite[Lemma 1.46]{Willem} that
\begin{equation*}
\displaystyle \int_{\R^N} |\nabla u_{\varepsilon}|^2 \dd x=S^{\frac{N}{2}}+O(\varepsilon^{N-2} )
\end{equation*}
and
\begin{equation*}
\displaystyle \int_{\R^N} | u_{\varepsilon}|^{2^{*}} \dd x=S^{\frac{N}{2}}+O(\varepsilon^N ).
\end{equation*}
So, we have
\begin{align*}
\displaystyle \sup_{t\geq 0}J_{A,V_{\mathcal{P}}}(t_{\varepsilon}u_{\varepsilon})
&<\frac{1}{N} S^{\frac{N}{2}}+ O(\varepsilon^{N-2})+ C_2\displaystyle \int_{\R^N}|u_{\varepsilon}(x)|^2 \dd x-C_3 \varepsilon^{2N-\alpha-(N-2)p}<\frac{1}{N}S^{\frac{N}{2}},
\end{align*}
since
\begin{equation*}
\lim_{\varepsilon\to 0}\varepsilon^{-(N-2)}\left(C_2\displaystyle \int_{\R^N}|u_{\varepsilon}(x)|^2 \dd x-C_3\varepsilon^{2N-\alpha-(N-2)p}\right)=-\infty.
\end{equation*}

Observe that the last result is a consequence of
\begin{equation*}
\lim_{\varepsilon\to 0}\varepsilon^{-(N-2)}\left(C_2\displaystyle \int_{B_{\delta}}|u_{\varepsilon}(x)|^2 \dd x-C_3\varepsilon^{2N-\alpha-(N-2)p}\right)=-\infty
\end{equation*}
and
\begin{equation*}\
C_2\displaystyle \int_{B_{2\delta}\setminus B_{\delta}}|u_{\varepsilon}(x)|^2 \dd x-C_3\varepsilon^{2N-\alpha-(N-2)p}=O(\varepsilon^{N-2}).
\end{equation*}
The rest of the proof is omitted here.
$\hfill\Box$\vspace*{.5cm}
	
\noindent\textrm{\textbf{Acknowledgements.}} The authors thank Prof. G. M. Figueiredo for many useful conversations and suggestions.

\end{document}